\theoremstyle{plain}
\newtheorem{theorem}{Theorem}[section]
\newtheorem{proposition}[theorem]{Proposition}
\newtheorem{corollary}[theorem]{Corollary}
\newtheorem{lemma}[theorem]{Lemma}
\theoremstyle{definition}
\newtheorem{definition}[theorem]{Definition}
\newtheorem*{ack}{Acknowledgment}
\newtheorem{remark}[theorem]{Remark}
\newtheorem{example}[theorem]{Example}
\newtheorem{question}[theorem]{Question}
\newtheorem{thm}{Theorem}
\def\cha{\mathrm{char}\ }
\def\Hilb{\mathrm{Hilb}}
\def\rk{\mathrm{rk}}
\def\Gor{\mathrm{Gor}}
\def\<{\left<}
\def\>{\right>}
\def\Max{\mathrm{Max}}
\def\Z{\mathfrak{Z}}
\def\Ann{\mathrm{Ann}}
\def\Sym{\mathrm{Sym}}
\title{Jordan degree type for codimension three Gorenstein algebras of almost constant Hilbert function} 
\begin{document}
\author[1]{Nancy Abdallah}
\author[2]{Nasrin Altafi}
\author[3]{Anthony Iarrobino}
\author[4]{Joachim Yam\'{e}ogo}
\affil[1]{\small University of Gothenburg, Gothenburg, Sweden.} 
\affil[2]{Department of Mathematics and Statistics, Queen's University, Kingston, Ontario, Canada, and Department of Mathematics, KTH Royal Institute of Technology, Stockholm, Sweden.}
\affil[3]{Department of Mathematics, Northeastern University, Boston, MA 02115, USA.}
\affil[4]{Universit\'e C\^ote d'Azur, CNRS, LJAD, France.}

\title{Jordan degree type for codimension three Gorenstein algebras of small Sperner number} 

\renewcommand\footnotemark{}
\thanks{\textbf{Keywords}: Artinian Gorenstein algebra, codimension three, Hilbert function, Jordan type, Jordan degree type, punctual Hilbert scheme, rank matrix, Sperner number.}
\thanks{ \textbf{2020 Mathematics Subject Classification}: Primary: 13E10;  Secondary: 13H10, 14C05.}
\thanks{ \textbf{Email~addresses:} {nancya@chalmers.se, nasrin.altafi@gmail.com, a.iarrobino@northeastern.edu, joachim.yameogo@univ-cotedazur.fr}}

\date{August 22, 2025}
\maketitle

\begin{abstract}  The Jordan type $P_{A,\ell}$ of a linear form $\ell$ acting on a graded Artinian algebra $A$ over a field $\sf k$ is the partition describing the Jordan block decomposition of the multiplication map $m_\ell$, which is nilpotent.
The Jordan degree type $\mathcal S_{A,\ell}$ is a finer invariant, describing also the initial degrees of the simple submodules of $A$ in a decomposition of $A$ as a direct sum of ${\sf k}[\ell]$-modules. The set of Jordan types of $A$ or Jordan degree types (JDT) of $A$ as $\ell$ varies,  is an invariant of the algebra. This invariant has been studied for codimension two graded algebras. We here extend the previous results to certain codimension three graded Artinian Gorenstein (AG) algebras - those of small Sperner number. Given a Gorenstein sequence $T$ - one possible for the Hilbert function of a codimension three graded AG algebra - the irreducible variety $\Gor(T)$ parametrizes all Gorenstein algebras of Hilbert function $T$. We here completely determine the JDT possible for all pairs $(A,\ell), A\in \Gor(T)$, for Gorenstein sequences $T$ of the form $T=(1,3,s^k,3,1)$ for Sperner number $s=3,4,5$ and arbitrary multiplicity $k$. For $s=6$ we delimit the prospective JDT, without verifying that each occurs. \par
\end{abstract}
\tableofcontents

\section{Introduction.}

Let $R={\sf k}[x,y,z]$, the polynomial ring over an infinite field $\sf k$.  We consider here codimension three standard graded Artinian Gorenstein (AG) algebra quotients $A=R/I$, where $ I$ is an ideal of $R$, that have {\it almost constant Hilbert functions} $H(A)=T$ with Sperner number - maximum value of the Hilbert function - at most 6. That is, $T$ satisfies
\begin{equation}\label{Taceq}
T=(1,3,s^k,3,1) \text{ with } 3\le s \le 6, \text{ and } 1\le k,
\end{equation}
where $j=k+3$ is the socle degree. For a graded Artinian algebra $A$, the \emph{Jordan type} $P_{A,\ell}$ where $\ell$ is a linear form in $R$ is the partition giving the Jordan block decomposition of the multiplication $m_\ell$ by $\ell$ on $A$, which is nilpotent.
The algebra $A$ has a decomposition as $\sf k[\ell]$ module: that is, $A$ is a direct sum of $t$ simple modules, each isomorphic to ${\sf k}[\ell]/(\ell^{p_i})$, where $P_{A,\ell}=(p_1,p_2,\ldots, p_t), p_1\ge p_2\ge \cdots \ge p_t$. The Jordan degree type (JDT) specifies also the degrees of the generators of these simple modules, which is an invariant of the pair $(A,\ell)$ (Definition \ref{JDT1def}).

Several authors have determined the possible Jordan types for codimension two graded complete intersections \cite{AIK}, codimension two Artinian algebras \cite{AIKY}, and certain graded Artinian Gorenstein (AG) algebras of low socle degree \cite{CGo}.  These studies determined the possible Jordan types for the pair $(A,\ell)$, for an arbitrary Artinian algebra $A$ in the class studied, and an arbitrary linear form $\ell$. For codimension three AG algebras, the possible Jordan types and Jordan degree types of these algebras are not well understood. \footnote{In higher codimension than three, the set of Gorenstein sequences themselves are not fully known, for example, in codimension four, whether they are unimodal, or satisfy an SI condition \cite{SeSr}.}

We here extend this study to height three graded AG algebras having Sperner number $ 3,4,5$ or $6$, and whose Hilbert function is almost constant. These JDT  correspond 1-1 to the prospective JDT matrices arising from the rank matrices studied by the second author in \cite{Al1} (see Section \ref{ranksec}). We introduce a new combinatorial method to determine the potential JDT matrices and their corresponding JDT for pairs $(A,\ell)$. For $s=3,4,5$, we prove that all these potential JDT are realizable. For $s=6$ we delimit the potential JDT, without verifying that each occurs: we conjecture that each such JDT is realizable. Our main result is the following: 

\begin{thm}\label{1thm} Let $T=(1,3,s^k,3,1)$ be almost constant (Equation \eqref{Taceq}), and assume that the field $\sf k$ has characteristic zero. 
 There are for $s=3$ and each $k\ge 3$ eight occurring Jordan degree types; for $s=4$ and each $k\ge 4$ there are twenty-six JDT; for $s=5$ and each $k\ge 3$ forty-seven JDT, and for $s=6$ and each $k$ at most sixty-five JDT.   When the field $\sf k$ has finite characteristic, there are at most the above number of rank matrices, or, equivalently, JDT.
\end{thm}

We show the above upper bounds on the number of rank matrices satisfying the conditions of Corollary \ref{Mcor} - thus the bounds on the numbers of JDT - in Theorem~\ref{s=3thm} and Table \ref{s=3tab} for $s=3$;  Theorem~\ref{4kthm} and Table \ref{s4tab} for $s=4$; Theorems~\ref{5kthm}, \ref{5kbthm} and Table \ref{s5tab} for $s=5$; and in Theorems \ref{s=6thm}, \ref{6bthm} and Table \ref{s6tab}  for $s=6$. When the characteristic $\cha \sf k = 0$ we specify an example for each JDT, with added data - usually a Macaulay dual generator and ideal - in the Tables \ref{1351fig}-\ref{newk=5btable} of Appendix \ref{tablesec}:  see Theorem~\ref{k3prop} for $s=3$,
Theorem \ref{s=4thm} for $s=4$, and Theorem \ref{s=5thm} for $s=5$.  In these results we also specify the occurring JDT for smaller multiplicities of $s$ than those in Theorem \ref{1thm}. For the case of finite characteristic see Remark~\ref{inffieldrem}. A notable feature of these results is that we give examples  for arbitrarily large $k$ or socle degree $j$.  In Section~\ref{exsec} we illustrate how we verified the tables, showing the several methods used. In Section~\ref{questsec} we pose further questions.  In particular, we conjecture that all potential rank matrices for codimension three Gorenstein sequences actually are realizable (Question  \ref{4.13quest}). \par A key step in the proof is that we show a pattern in the rank matrices for these Gorenstein sequences $T$ as in Equation \eqref{Gorseqeqn} when $k\ge 3$ (Lemma~\ref{diaglem}); the pattern arises from a projection morphism taking $\Gor(T)$ to the punctual Hilbert scheme $\Hilb^s(\mathbb P^2)$ (Lemma~\ref{maphilblem}). Applying this, in Section \ref{threesec} we first describe three kinds of growth of JDT sequences for a given Sperner number $s$, as the socle degree $j$ (or, equivalently, the multiplicity $k$ of $s$ in $T$) increases:  in particular we describe \emph{lengthening parts}, and \emph{repeating parts} (Definition~\ref{partsdef}). We then
show the connection between the $\Delta$ invariant of the rank matrix and the number of these parts (Proposition~\ref{partsprop}).  

\section{Rank matrices and Jordan degree type.}\label{rankjdtsec}
We describe codimension three Gorenstein sequences in Section \ref{Gorsec}, we introduce Lefschetz conditions in Section \ref{prelimsec}; we introduce Jordan basis and the Jordan degree type (JDT) in Section \ref{Jbsec} and in Section \ref{ranksec} establish the connection with rank matrices and the JDT matrix studied by the second author in \cite{Al1,Al3}. The map from $\Gor(T)$ to the punctual Hilbert scheme $\Hilb^s(\mathbb P^2) $ (Lemma \ref{maphilblem}) imposes conditions on a new $\Delta$ sequence associated to rank matrices for the more general Hilbert function of Equation \eqref{Gorseqeqn} (Corollary~\ref{deltacor}), and in particular for the almost constant Hilbert functions of Equation \eqref{Taceq} (Corollary~\ref{Mcor}). After introducing the Macaulay duality (Section~\ref{Macsec}) we specify the construction of the JDT matrix (Section \ref{JDTmatrixsec}) that we use throughout. Remark \ref{JDTmatrixtoJDTrem} recalls how to write down the Jordan degree type from the JDT matrix \cite{Al3}.  A theme of the paper is to restrict the potential JDT matrices, given the Gorenstein sequence $T$.

\subsection{Gorenstein sequences.}\label{Gorsec}
 A Hilbert function sequence $T=(1,t_1,\ldots, t_j)$ that is possible for a graded AG algebra is termed a \emph{Gorenstein sequence}.  Those occurring for a codimension three graded Artinian Gorenstein algebra satisfy
\begin{equation}\label{Gorseqeqn} T=(1,3,T_1,s^k,T_1^r,3,1),
\end{equation}
  where $T_1^r$ is the reverse of $T_1$ and $(1,3,T_1,s)$ is an O-sequence,
meaning that its first difference occurs as the Hilbert function of a codimension two graded Artinian algebra \cite{BuEi, St, Di}.\footnote{See \cite[\S 4.2, Theorem 4.2.10]{BruH} for a discussion of Macaulay growth conditions for Hilbert functions.} The maximum value $s$ of $T$ is termed the \emph{Sperner number} of $T$.  The sequences $T$ satisfying Equation \eqref{Gorseqeqn} are also termed codimension three SI sequences. The graded codimension three AG algebra quotients of $R$ having Hilbert function $T$ are parametrized by a smooth irreducible variety $\Gor(T)$ of known dimension  (see \cite{Di},\cite[Theorem 5.25]{IK}).

\subsection{Preliminaries, weak and strong Lefschetz.}\label{prelimsec}

We say that the pair $(A,\ell)$ where $A$ is a graded Artinian algebra and $\ell\in A$ is a linear form is a \emph{weak Lefschetz pair} (WL) if the multiplication map by $\ell$ from $A_i$ to $A_{i+1}$ has maximal rank for any non-negative integer $i$.  If for any pair of non-negative integers $(u,i)$ the multiplication map by $\ell^u$ from $A_i$ to $A_{i+u}$ has maximal rank, we say that the pair $(A,\ell)$ is \emph{strong Lefschetz} (SL). For an Artinian algebra $A$, if there exists a linear form $\ell$ such that $(A,\ell)$ is WL (or SL) pair we say $A$ satisfies the WL (or SL) property and $\ell$ is a WL (or SL) element. The weak and strong Lefschetz pairs $(A,\ell)$ can be determined from the Jordan type $P_{A,\ell}$. A pair $(A,\ell)$ is WL if and only if the number of parts of $P_{A,\ell}$ is the Sperner number (maximum value) of the Hilbert function $T$. A pair is SL if and only if $P_{A,\ell}$ is the conjugate $T^\vee $ of $T$, regarded as a partition (see \cite[Prop. 2.10]{IMM}). Artinian algebras of codimension two satisfy the SL property when the characteristic of $\sf k$ is zero or greater than the socle degree \cite{Br}. In particular, Artinian Gorenstein algebras of codimension two (these are complete intersections) over a field of characteristic zero have the SL property. We know that all codimension three graded {\it complete intersections} are weak Lefschetz when $\cha {\sf k}=0$, a result that already uses the Grauert M\"{u}lich theorem for vector bundles on $\mathbb P^2$ \cite{HMNW}. The question of whether all codimension three AG algebras $A$ have the weak Lefschetz property - equivalently, whether for a generic linear form $\ell$ the partition $P_{A,\ell}$ has the Sperner number of parts - has been reduced to the case of $A$  compressed of odd socle degree \cite{BMMNZ}.\footnote{Recall that a compressed AG algebra quotient $A$ of ${\sf k}[x,y,z]$ is one having the maximum possible Hilbert function given the soclel degree $j$.}  In a recent paper, the authors with A.~Seceleanu showed that when $\cha {\sf k}=0$, all codimension three AG algebras of Sperner number at most six are strong Lefschetz \cite{AA-Y}.  The second author has shown that for any SI Gorenstein sequence $T$ as in Equation \eqref{Gorseqeqn} - or its higher codimensional analogue - there is a strong Lefschetz AG algebra $A\in \Gor(T)$ \cite{Al2}; T. Harima had shown an analogous result for weak Lefschetz AG algebras $A$ in 1995 \cite{Ha}.  The codimension of the non weak-Lefschetz locus is studied in \cite{BMMN1}, and further results about weak-Lefschetz properties for AG algebras with small Sperner numbers are shown in \cite{BMMN2}. Here, in contrast, our goal is to specify all JDT consistent with each Hilbert function we study, analogous to the cited results for codimension two \cite{AIK,AIKY} and for Perazzo algebras \cite{MMP}.

\subsection{Jordan basis and Jordan degree type.}\label{Jbsec}
For the following definition of Jordan basis, JDT, see \cite[Definition 2.28(i)]{IMM}, \cite[\S 2.3]{AIM}, or \cite{Al3}. That the JDT can be defined for any graded Artinian algebra is a consequence of results of T.~Harima and J. Watanabe \cite{HW} concerning the invariants of central simple modules - see \cite[Lemma 2.2iv, Def. 2.28]{IMM}, also \cite[Definition~4.1]{CGo}. A further discussion of this connection is in \cite[Definition 2.33 and Lemma~2.34]{IMM}.  For a graded Gorenstein algebra $A$, the JDT with respect to a linear form $\ell\in R_1$ is symmetric (Lemma \ref{JDTsymlem}), making the JDT a more natural invariant than just the Jordan type of $A$.
\begin{definition}[Jordan basis, JDT]\label{JDT1def}
A. Let $A=R/I$ be an Artinian graded algebra, let $M$ be a finite graded $A$-module, and let $\ell\in A_1$ be a homogeneous linear element of $A$.  Then $M$ may be written as a direct sum of simple ${\sf k} [\ell]$ modules $M(i)$ that we call $\ell$-strings,   
\begin{equation}\label{decompeq} M=\oplus M(i) \text { where }M(i)=\langle v_i,\ell v_i, \ldots ,\ell^{p_i-1}v_i\rangle, 1\le i\le t, \text { satisfying } \ell^{p_i}v_i=0.
\end{equation}
We call this set of vector spaces ${\sf k}[\ell]v_i$ a \emph{Jordan basis} of $M$, with respect to $\ell$. The set of lengths of these strings is uniquely determined by $(M,\ell)$ and form the partition
$P_{M,\ell}$ with $t$ parts,  the \emph{Jordan type} of $(M,\ell)$. These are simply the dimensions of the Jordan blocks of the multiplication $m_\ell$ on $M$.

B.  The \emph{Jordan degree type} $\mathcal S_{M,\ell}$ supplements
the Jordan type by specifying the degrees  $\nu_i$ of the generators $v_i$ of each ${\sf k}[\ell]$-module $M(i)$. Given the decomposition of $M$ in Equation \eqref{decompeq} we set
$$\mathcal S_{M,\ell}=\{(p_i,\nu_i), \nu_i=\text { degree of } v_i.\} $$
We may also write the Jordan degree type in list manner:
\begin{equation}\label{listJDTeq}
\mathcal S_{M,\ell}=\sum_{p}\sum_\nu (p,\nu)^{\eta(p,\nu)}, 
\end{equation}
where $\eta(p,\nu)$ is the multiplicity of $(p,\nu)$, and the sum is over distinct pairs.\par
For convenience we may write the pair $(p,\nu)$ as $p_{\nu}$. Also we will write $p\uparrow_\nu^{\nu+\alpha}$ in place of $\sum_{i=0}^{\alpha}(p,\nu+i)$. 
\end{definition}
Although the decomposition of the ${\sf k}[\ell]$-module $ M$ into simples is not unique, for $M$ graded the JDT is uniquely determined up to order by the pair $(M,\ell)$ \cite[Lemma 2.2(iv)]{IMM}.\par
There is a notable symmetry for Jordan degree types that occur for standard graded Gorenstein algebras. This was first shown by T. Harima and J.~Watanabe \cite{HW}; it is described also in \cite[Lemma~4.6]{CGo}, and, in the language of JDT, in \cite[Proposition~2.28]{IMM}. We take our statement from \cite[Proposition 2.21]{AIM}.
\begin{lemma}\label{JDTsymlem}(Symmetry of JDT) Let $A$ be a graded AG algebra and $\ell\in A_1$, and write the Jordan degree type in list manner 
$\mathcal S_{A,\ell}=\sum_{p}\sum_\nu (p,\nu)^{\eta(p,\nu)}$ as above. Then we have for $\nu\le j/2$
\begin{equation}\label{JDTsymeq}
\eta(p,\nu)=\eta(p,j+1-\nu-p).
\end{equation}
\end{lemma}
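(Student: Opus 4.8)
The plan is to exploit the Gorenstein duality pairing on $A$ together with its compatibility with the $\sf k[\ell]$-module structure. Since $A$ is standard graded Artinian Gorenstein with socle degree $j$, there is a perfect pairing $A_i \times A_{j-i} \to A_j \cong {\sf k}$, and more importantly the Macaulay dual generator $F$ of degree $j$ (Section \ref{Macsec}) gives an isomorphism of $A$-modules (hence of $\sf k[\ell]$-modules) $A \cong \Hom_{\sf k}(A,{\sf k})$ shifted so as to be graded of degree $-j$; equivalently, $A$ carries a nondegenerate symmetric bilinear form $\langle\,,\,\rangle$ with $\langle ab,c\rangle = \langle a,bc\rangle$ and $\langle A_i,A_{i'}\rangle = 0$ unless $i+i' = j$. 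The first step is to record that $m_\ell$ is \emph{self-adjoint} with respect to this form: $\langle \ell a, b\rangle = \langle a, \ell b\rangle$. Therefore the form restricts nontrivially across the $\sf k[\ell]$-module decomposition \eqref{decompeq}: the radical filtration and the socle filtration of the $\sf k[\ell]$-module $A$ are exchanged by duality.

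Next I would make the combinatorial bookkeeping precise. Fix a Jordan basis $A = \oplus_{i=1}^t M(i)$ with $M(i)$ generated by $v_i$ of degree $\nu_i$ and length $p_i$, so the string $M(i)$ occupies degrees $\nu_i, \nu_i+1,\dots, \nu_i+p_i-1$. The socle of $M(i)$ as a $\sf k[\ell]$-module is spanned by $\ell^{p_i-1}v_i$, living in degree $\nu_i+p_i-1$. Under the Gorenstein pairing, the functional dual to $M(i)$ must be a $\sf k[\ell]$-string of the same length $p_i$ (since $m_\ell$ self-adjoint forces the dual module to have the same Jordan block size) whose generator sits in the degree paired with the socle degree of $M(i)$: that degree is $j - (\nu_i + p_i - 1) = j+1-\nu_i-p_i$. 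Thus duality gives an involution on the multiset $\{(p_i,\nu_i)\}$ sending $(p,\nu)\mapsto (p, j+1-\nu-p)$, which is exactly the statement $\eta(p,\nu) = \eta(p,j+1-\nu-p)$; restricting to $\nu \le j/2$ loses no information by the symmetry itself.

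The one genuine subtlety — and the step I expect to be the main obstacle — is showing that the Gorenstein pairing actually restricts to a \emph{perfect} pairing between $M(i)$ and its mirror string, rather than merely pairing them nondegenerately after taking an associated graded or after summing over strings of equal length. The issue is that the chosen Jordan decomposition is not canonical, and the form could a priori mix different summands. The standard fix (cf. \cite[Lemma 2.2(iv)]{IMM}, \cite[Lemma 4.6]{CGo}, \cite[Prop. 2.21]{AIM}) is to argue on invariants that \emph{are} canonical: for each $p$ and each $u$, the map $m_\ell^u : A_i \to A_{i+u}$ has a rank, and the JDT is determined by these ranks (this is precisely the rank-matrix correspondence of Section \ref{ranksec}); the self-adjointness of $m_\ell$ gives $\operatorname{rk}(m_\ell^u : A_i\to A_{i+u}) = \operatorname{rk}(m_\ell^u : A_{j-i-u}\to A_{j-i})$ by transposing with respect to the nondegenerate pairing, and translating this rank identity through the formula expressing $\eta(p,\nu)$ in terms of these ranks yields \eqref{JDTsymeq}. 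So concretely I would: (i) establish self-adjointness of $m_\ell$; (ii) deduce the rank symmetry $\operatorname{rk}(m_\ell^u\colon A_i\to A_{i+u}) = \operatorname{rk}(m_\ell^u\colon A_{j-u-i}\to A_{j-i})$; (iii) recall the inclusion–exclusion formula $\eta(p,\nu) = $ (alternating sum of such ranks) and substitute, checking that the index substitution $i\mapsto j-u-i$, $u$ fixed, carries the formula for $\eta(p,\nu)$ to that for $\eta(p,j+1-\nu-p)$. Step (iii) is a routine but slightly fiddly reindexing; steps (i)–(ii) are immediate from the Gorenstein structure.
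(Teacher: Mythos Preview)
The paper does not actually prove this lemma: it is stated with attribution to Harima--Watanabe \cite{HW} and to \cite[Lemma~4.6]{CGo}, \cite[Proposition~2.28]{IMM}, \cite[Proposition~2.21]{AIM}, with no proof environment following. So there is no ``paper's own proof'' to compare against beyond those citations.

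Your plan is correct and is essentially the standard argument in the cited sources. In particular, your second (rank-based) route --- self-adjointness of $m_\ell$ under the Gorenstein pairing, the resulting identity $\rk(m_{\ell^u}\colon A_i\to A_{i+u}) = \rk(m_{\ell^u}\colon A_{j-i-u}\to A_{j-i})$, and then reading off \eqref{JDTsymeq} from the inclusion--exclusion formula for $\eta(p,\nu)$ --- is exactly the argument encoded in the rank/JDT matrix formalism of Section~\ref{ranksec} (cf.\ Definition~\ref{def:jordan matrix}): the rank identity is precisely the anti-diagonal symmetry $r_{a,b}=r_{j-b,j-a}$ of $M_{A,\ell}$, and Equation~\eqref{eq:jordan matrix} then gives $(J_{A,\ell})_{u,v}=(J_{A,\ell})_{j-v,j-u}$, which unwinds to \eqref{JDTsymeq}. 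You were right to flag the first (direct string-pairing) approach as delicate and to retreat to the rank argument; the rank version is cleaner and avoids the basis-dependence issue entirely.
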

B. Costa and R. Gondim give a nice visualization of this symmetry in their string diagrams, see \cite[p. 11-12 and \S 5]{CGo}. 
\par
  The second author has specified a smallest socle-degree pair of graded AG algebras in codimension three having the same Jordan type, but different JDT: two AG algebras $\{A,B\}$ of  Hilbert function $T=(1,3,6,9,9,9,6,3,1)$ of socle degree $8$ \cite[Example 4.2]{Al3}. 
\begin{example}[Which symmetric $\mathcal S$ occur in codimension three?]\label{2.3ex}
Consider $\mathcal{S}_1=(\mathsf{5}_0,\mathsf{3}_1,\mathsf{3}_1,\mathsf{1}_2)$ and $\mathcal {S}_2=(\mathsf{5}_0,\mathsf{3}_1,\mathsf{2}_1,\mathsf {2}_2)$  that are two potential JDT having the needed symmetry for an AG algebra $A$ of Hilbert function $T=(1,3,4,3,1)$; and $\mathcal S_3=({\sf 4}_0,({\sf 2}_1)^2)$ is consistent with $T=(1,3,3,1)$. 
These three actually occur (Theorem \ref{s=4thm}  and table \ref{1351fig}, respectively). However, the sequence $\mathcal S_4=({\sf 4}\uparrow_0^2, {\sf 1}_{1,4})$ which is symmetric in the sense of Equation~\eqref{JDTsymeq} and consistent with  the Hilbert function $T=(1,3^4,1)$, does not actually occur as the JDT of a graded AG algebra, as we shall see (Theorem \ref{k3prop} and Table~\ref{1351fig}). See Figure \ref{Sfig}.
\end{example}

Given a potential JDT sequence $\mathcal S$, we denote by $T(In (\mathcal S))$ the Hilbert function of the initial elements of the strings of $\mathcal S$: that is $T(In (\mathcal S))_i$ is the number of initial elements of the strings of $\mathcal S$ having degree $i$. For $\mathcal{S}_1$ we have $T(In\,\mathcal S_1)=T(In\,\mathcal S_2)=(1,2,1,0)$ but $T(In\,\mathcal S_4)=(1,2,1,0,1)$.
\begin{lemma}\label{Soccurlem} A JDT sequence $\mathcal S$ can occur for a codimension three AG algebra only if $T(In\,\mathcal S)$ is an $O$-sequence in codimension two. That is, $T(In\,\mathcal S)$ must occur as the Hilbert function of a graded quotient of ${\sf k}[y,z]$.
\end{lemma}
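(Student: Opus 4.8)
The plan is to exhibit, from a Jordan basis realizing $\mathcal S$ for an AG algebra $A=R/I$ with $R={\sf k}[x,y,z]$, an explicit codimension-two graded algebra whose Hilbert function is $T(In\,\mathcal S)$. First I would fix a general linear form $\ell$ realizing the JDT $\mathcal S_{A,\ell}=\mathcal S$, and choose coordinates so that $\ell = x$; write $\bar R = {\sf k}[y,z]$ for the quotient ring. The natural object to look at is the quotient $B = A/xA$, equivalently $A\otimes_R \bar R = \bar R/\bar I$ where $\bar I$ is the image of $I$. The key observation is that for each $\ell$-string $M(i)=\langle v_i,\ell v_i,\dots,\ell^{p_i-1}v_i\rangle$ in the decomposition \eqref{decompeq}, only the generator $v_i$ survives in $A/xA$: multiplication by $x=\ell$ kills all of $M(i)$ in the quotient, so $M(i)\otimes \bar R \cong {\sf k}$ sitting in degree $\nu_i$. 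Hence the Hilbert function of $B=A/xA$ is exactly $H(B)_i = \#\{i : \nu_i = i\} = T(In\,\mathcal S)_i$. Since $B$ is a graded quotient of $R/xR \cong {\sf k}[y,z]$, its Hilbert function is by Macaulay's theorem an $O$-sequence in two variables, which is what we must show.

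The one subtlety is that the decomposition of $A$ into $\ell$-strings is a decomposition as ${\sf k}[\ell]$-modules, not as $R$-modules, so it is not a priori obvious that passing to $A/xA$ simply reads off the generators $v_i$. To make the Hilbert function count rigorous I would argue dimension by dimension: in degree $i$, the map $m_x\colon A_{i-1}\to A_i$ has image spanned by $\{\ell^r v_j : r\ge 1,\ \nu_j + r = i\}$, and the complementary generators $\{v_j : \nu_j = i\}$ project to a basis of $(A/xA)_i = A_i/x A_{i-1}$; this is immediate from the fact that $\{\ell^r v_j\}$ is a basis of $A$ adapted to the filtration by powers of $x$. So $\dim_{\sf k}(A/xA)_i$ equals the number of strings with initial degree $i$, i.e. $T(In\,\mathcal S)_i$, with no reference to the $R$-module structure beyond the obvious fact that $x$ acts as $\ell$.

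Finally I would note that $A/xA$ really is a quotient of ${\sf k}[y,z]=R/(x)$ as a graded ring (the ideal $\bar I$ contains the images of the generators of $I$), hence $T(In\,\mathcal S) = H(A/xA)$ is by Macaulay's growth bound the Hilbert function of a standard graded algebra in two variables, i.e. an $O$-sequence in codimension two. I do not expect any real obstacle here; the mild care needed is just the bookkeeping in the middle step to be sure the string generators descend to a basis of the quotient, and to confirm that the ${\sf k}[\ell]$-module decomposition suffices for the Hilbert-function statement without invoking an $R$-module splitting, which in general does not exist. No positivity hypothesis on $\cha{\sf k}$ is needed for this lemma.
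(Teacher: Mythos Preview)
Your proof is correct and follows essentially the same approach as the paper: both arguments identify $T(In\,\mathcal S)$ with the Hilbert function of $A/xA \cong R/(I+(x))$, a graded quotient of ${\sf k}[y,z]$. The only cosmetic difference is that the paper passes to leading monomials $w_m\in{\sf k}[y,z]$ of the string generators $v_m$ (using reverse-lex order with $x$ smallest) to exhibit a cobasis of $I+(x)$, whereas you count dimensions directly by observing that the Jordan basis $\{x^r v_j\}$ is adapted to the filtration by $x$; your bookkeeping is arguably cleaner and makes the independence from any $R$-module splitting more transparent.
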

\begin{proof} Let $I\subset R={\sf k}[x,y,z]$ be an ideal such that the codimension three AG algebra $A=R/I$ has JDT $\mathcal S_{A,x}$.
One can compute a  Gr\"{o}bner basis (standard basis) of $I$ using homogeneous degree rvlex order with $z > y> x$. As stated in Definition \ref{JDT1def}, the algebra $A=R/I$ can be written as a direct sum: $A=\oplus A_m, A_m=\langle v_m,x v_m, \ldots ,x^{p_{\nu_m}-1}v_m\rangle, 1\le m\le t, \text { satisfying } x^{p_{\nu_m}}v_m=0$ and $\nu_m$ is the degree of the form $v_m$. Using homogeneous degree rvlex (reverse lex) order with $z > y> x$, let $w_m$ be the leading monomial of $v_m$. It is clear that $w_m\in {\sf k}[y,z]$. 
Since by definition, the set $\left\{x^{\alpha_m}v_m \right\}_{1\leq m\leq t, 0\leq\alpha_m\leq p_{\nu_m}-1}$ is a cobasis of the ideal $I$, we have that $\left\{w_m\right\}_{1\leq m \leq t}$ is a cobasis of the ideal $\left(I+(x)\right)$. Now, let $\gamma_i=\#\left\{w_m, degree(w_m)=i\right\}$. Then the sequence of the $\gamma_i$'s is the Hilbert function of the ideal $\left(I+(x)\right)$.
\end{proof}
The condition of Lemma \ref{Soccurlem} is not sufficient for a potential JDT sequence: that is, the rank conditions of Lemma \ref{ranklem} below impose further restrictions. 
\begin{example} Consider ${\mathcal S}=\left(5\uparrow_0^2,2_1,1_3,2_4\right)$. The sequence ${\mathcal S}$ is symmetric and compatible with the Hilbert function $(1,3,4^3,3,1)$. Let $\gamma_i$ be the number of strings of ${\mathcal S}$ starting at degree $i$. We have $\gamma_0=1, \gamma_1=2, \gamma_2=1, \gamma_3=1, \gamma_4=1$. That gives an $O$-sequence in codimension two: $(1,2,1,1,1)$. One can check in Table \ref{4ktable} that ${\mathcal S}=\left(5\uparrow_0^2,2_1,1_3,2_4\right)$ does not occur as JDT of an AG algebra, despite satisfying the condition of Lemma \ref{Soccurlem}.
\end{example}

\begin{question} Let $\mathcal S=\sum_{p}\sum_\nu (p,\nu)^{\eta(p,\nu)}$ be a symmetric sequence in the notation of Lemma \ref{JDTsymlem}. Let $p_1=\Max\left\{p\right\}$. Is it true that ${\mathcal S}$ can occur as a JDT if and only if for all $i\in [0, p_1] $, $\mathcal S=\sum_{p-i}\sum_\nu (p-i,\nu)^{\eta(p-i,\nu)}$ satisfies the property described in Lemma \ref{Soccurlem}: that is, if we shorten each string by $i$,  with the convention that we omit $(p-i,\nu)^{\eta(p-i,\nu)}$ whenever $p-i\leq 0$ then the initial terms of the strings determine an O-sequence?
\end{question} 
\par
Note that the question is based on the fact that if an ideal $I$ defines an algebra whose JDT with respect to $\ell=x$ is $\mathcal S=\sum_{p}\sum_\nu (p,\nu)^{\eta(p,\nu)}$, then the ideal quotient $(I:x)$ defines an algebra whose JDT will be $\mathcal S=\sum_{p-1}\sum_\nu (p-1,\nu)^{\eta(p-1,\nu)}$ (the length of each string is shortened by 1).\par

\begin{figure}
\begin{center}
\vskip 0.2cm
$\mathcal S_1:\begin{array}{ccccc}
*&*&*&*&*\\
&*&*&*&\\
&*&*&*&\\
&&*&&\\\hline
1&3&4&3&1\\
\end{array}$\qquad
$\mathcal S_2:\begin{array}{ccccc}
*&*&*&*&*\\
&*&*&*&\\
&*&*&&\\
&&*&*&\\\hline
1&3&4&3&1\\
\end{array}$\\
\vspace{0.5cm}
$\mathcal S_3:\begin{array}{ccccc}
	\\
	\\
*&*&*&*\\
&*&*&\\
&*&*&&\\\hline
1&3&3&1\\
&&&\\
\end{array}$\qquad 
$\mathcal S_4:\begin{array}{cccccc}
*&*&*&*&&\\
&*&*&*&*&\\
&&*&*&*&*\\
&*&&&*&\\\hline
1&3&3&3&3&1\\
\end{array}$

\normalsize\vskip 0.2cm
\caption{Sequences $\mathcal S_1,\mathcal S_2,\mathcal S_3$ occur as JDT (Table \ref{4ktable} \#1,3 and Table \ref{1351fig} \#1) but $\mathcal S_4$ does not (see Example \ref{2.3ex}).}\label{Sfig}
\end{center}
\end{figure}

\subsection{Rank matrices.}\label{ranksec}
A result of the second author (Lemma \ref{ranklem}, below, see \cite{Al1}) is key in determining the Jordan degree type of Artinian Gorenstein algebras. Assume that $T=(1,3,T_1,s^k,T_1^r,3,1)$ where $(1,3,T_1,s)$ has first difference an O-sequence, and $T_1^r$ is the reverse of $T_1$, as in Equation \eqref{Gorseqeqn}: any height three Gorenstein sequence can be written in this way.  For any AG algebra $A$ in $\Gor(T)$ we recall from \cite[Definition 3.1]{Al1} the rank matrix $M_{A,\ell}$ for the pair $(A,\ell)$ where $\ell\in A_1$. Here $M_{A,\ell}$ is the symmetric $(j+1)\times (j+1)$ upper triangular matrix whose entry $(M_{A,\ell})_{u,v}$ is the rank of $m_{\ell^{v-u}}: A_u \to A_v$ for $u\le v$.  We set for  $a\le j-b$,
 \begin{equation}\label{rankeq} 
 r_{a,b}(\ell)=\rk(m_{\ell^{j-b-a}}: A_a\to A_{j-b})=(M_{A,\ell})_{a,j-b}.
 \end{equation}

The following Lemma applies to all Gorenstein sequences - Hilbert functions of AG algebras - but we state it for codimension three.  We follow a notation in \cite{Al1}: given a length $w$ sequence $\sf v$ of integers, we denote by ${\sf v}_+$ the length $w+1$ sequence $(0,{\sf v})$. 
 \begin{lemma}[Rank Matrix conditions]\label{ranklem}\cite[Corollary 3.9]{Al1}
Let $T$ be a codimension three Gorenstein sequence as in Equation \eqref{Gorseqeqn}. The rank matrix $M_{A,\ell}$ 
satisfies the following: 
\begin{enumerate}[(i.)] 
\item
The diagonal parallel to the main diagonal through $(0,i)$ is the Hilbert function of the AG algebra $A(i)=R/(I:\ell^i)$ - so is a Gorenstein sequence.
\item The differences $(H(A(i))-H(A(i+1)_+), i=0,1,\ldots$ between adjacent diagonals of $M_{A,\ell}$ are O-sequences \cite[Lemma~3.6]{Al1}. \item The rows are non-increasing; the columns are non-decreasing within the upper triangular portion of $M_{A,\ell}$.
\item Furthermore, each adjacent square of entries in  $M_{A,\ell}$ on and above the main diagonal satisfies \cite[Lemma~3.7]{Al1}
\begin{equation}\label{sumdiageq}
\begin{pmatrix}\alpha&\beta\\\gamma&\zeta\end{pmatrix}\subset M_{A,\ell}\Rightarrow \gamma+\beta\ge \alpha+\zeta.
\end{equation} 
\end{enumerate}
\end{lemma}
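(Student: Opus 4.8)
The plan is to reduce everything to statements about ranks of multiplication‑by‑$\ell^{\,t}$ maps between graded pieces of $A$. Fix a minimal generator $F$ (of degree $j$) of the inverse system of $A=R/I$, so $I=\Ann(F)$, and write $\rho_{a,c}=\rk\big(m_{\ell^{c-a}}\colon A_a\to A_c\big)$ for $0\le a\le c\le j$. The Gorenstein pairing $A_a\times A_{j-a}\to A_j\cong{\sf k}$ identifies the transpose of $m_{\ell^{c-a}}\colon A_a\to A_c$ with $m_{\ell^{c-a}}\colon A_{j-c}\to A_{j-a}$, so $\rho_{a,c}=\rho_{j-c,j-a}$; this is exactly the symmetry built into $M_{A,\ell}$, and the entries of $M_{A,\ell}$ are the $\rho_{a,c}$ (with one index reflected so as to display the symmetry). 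Hence it suffices to verify (i)--(iv) for the array $(\rho_{a,c})$. The two structural inputs will be Macaulay's inverse‑system description of the ideal quotients $(I:\ell^i)$ for (i)--(ii), and two elementary facts of linear algebra --- submultiplicativity of rank and the Frobenius rank inequality --- for (iii)--(iv).

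\medskip\noindent\textit{Conditions (i) and (ii).} Apolarity gives $(I:\ell^i)=\big(\Ann(F):\ell^i\big)=\Ann(\ell^i\circ F)$, so $R/(I:\ell^i)$ is again AG, with dual generator $\ell^i\circ F$ of degree $j-i$ (and is the zero ring once $\ell^i\circ F=0$); hence its Hilbert function is a Gorenstein sequence. Then I would identify it with a diagonal: since $(I:\ell^i)_a=\{f\in R_a:\ell^if\in I\}$ is the kernel of $R_a\to A_{a+i}$, $f\mapsto\ell^if$, one gets $(R/(I:\ell^i))_a\cong \ell^iA_a$, so $H\big(R/(I:\ell^i)\big)_a=\rho_{a,a+i}$, the $i$-th diagonal parallel to the main one. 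That is (i). For (ii), put $B=R/(I:\ell^i)$; the four‑term exact sequence
\[
0\longrightarrow (0:_B\ell)(-1)\longrightarrow B(-1)\xrightarrow{\ \ell\ }B\longrightarrow B/\ell B\longrightarrow 0
\]
together with rank--nullity for $m_\ell\colon B_{a-1}\to B_a$ gives $H(B/\ell B)_a=H(B)_a-\rk\big(m_\ell\colon B_{a-1}\to B_a\big)$; and $m_\ell$ on $\ell^iA_{a-1}\cong B_{a-1}$ is $m_{\ell^{i+1}}$ on $A_{a-1}$, so $\rk\big(m_\ell\colon B_{a-1}\to B_a\big)=\rho_{a-1,a+i}=H\big(R/(I:\ell^{i+1})\big)_{a-1}$. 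Thus the difference of consecutive diagonals is the Hilbert function of $B/\ell B$, a graded quotient of $R/\ell R\cong{\sf k}[y,z]$, hence a codimension $\le 2$ $O$-sequence; this is (ii).

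\medskip\noindent\textit{Conditions (iii) and (iv).} Factoring $m_{\ell^{c-a}}$ as $m_\ell\circ m_{\ell^{c-a-1}}$ or as $m_{\ell^{c-a-1}}\circ m_\ell$ shows that its image, hence its rank, drops weakly when one post‑ or pre‑composes with $m_\ell$; reading this across rows and down columns of $(\rho_{a,c})$ yields ``rows non‑increasing, columns non‑decreasing.'' Condition (iv) is the only genuinely new ingredient, and it is precisely the Frobenius rank inequality $\rk(PQ)+\rk(QS)\le\rk(Q)+\rk(PQS)$ applied to the three‑step chain
\[
A_a\xrightarrow{\ \ell\ }A_{a+1}\xrightarrow{\ \ell^{\,t}\ }A_{a+t+1}\xrightarrow{\ \ell\ }A_{a+t+2},\qquad S=m_\ell,\ Q=m_{\ell^{\,t}},\ P=m_\ell :
\]
here $\rk(Q),\rk(QS),\rk(PQ),\rk(PQS)$ are $\rho_{a+1,a+t+1},\rho_{a,a+t+1},\rho_{a+1,a+t+2},\rho_{a,a+t+2}$, i.e.\ exactly the four entries of an adjacent $2\times2$ square, arranged with the corner carrying the longest composition $PQS$ opposite the corner carrying the middle map $Q$, and the Frobenius inequality then reads $\gamma+\beta\ge\alpha+\zeta$.

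\medskip\noindent\textit{Where the difficulty sits.} The last paragraph is essentially automatic: once the entries are recognized as ranks of $\ell$-power multiplications, (iii) is submultiplicativity of rank and (iv) is Frobenius, nothing deeper. The real work --- and the likeliest place for the argument to slip --- is the bridge of (i)--(ii): establishing the apolarity identity $(I:\ell^i)=\Ann(\ell^i\circ F)$, hence the Gorenstein‑ness of every $R/(I:\ell^i)$ (this is the only point at which the Gorenstein hypothesis on $A$ is used, and it is what forces the diagonals to be Gorenstein sequences), and then keeping careful track of degree shifts and of the orientation in which $M_{A,\ell}$ is displayed, so that ``diagonal,'' ``row,'' ``column'' and ``adjacent square'' name the objects one intends. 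It is worth noting too that the codimension‑three hypothesis is used only in (ii), to make the resulting $O$-sequence a quotient of ${\sf k}[y,z]$; (i), (iii) and (iv) hold for any Gorenstein sequence.
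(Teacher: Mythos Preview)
The paper does not give its own proof of this lemma: it is quoted verbatim from \cite[Corollary~3.9]{Al1}, with parts (ii) and (iv) cross-referenced to \cite[Lemma~3.6]{Al1} and \cite[Lemma~3.7]{Al1}. So there is no in-paper argument to compare against.

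Your proof is correct. The apolarity identity $(I:\ell^i)=\Ann(\ell^i\circ F)$ and the isomorphism $(R/(I:\ell^i))_a\cong\ell^iA_a$ give (i); quotienting $B=R/(I:\ell^i)$ by $\ell$ and tracking ranks gives (ii) as the Hilbert function of a quotient of ${\sf k}[y,z]$; (iii) is rank submultiplicativity; and your identification of (iv) with the Frobenius rank inequality applied to the chain $A_a\xrightarrow{\ell}A_{a+1}\xrightarrow{\ell^t}A_{a+t+1}\xrightarrow{\ell}A_{a+t+2}$ is exactly right (with display entry at position $(a,c)$ equal to your $\rho_{a,c}$, so the four ranks land on the four corners in the required pattern). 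Your closing remark that codimension three enters only in (ii) is also correct.

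One small point of interpretation, not a gap: your Frobenius step needs $t=c-a-1\ge 0$, i.e.\ the $2\times 2$ square lies strictly inside the upper triangle. The inequality $\gamma+\beta\ge\alpha+\zeta$ is indeed meant only for such squares; for a square straddling the main diagonal one would have $\gamma=0$ and the inequality is false in general, so the lemma is implicitly restricted to the region where all four entries are genuine ranks.
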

\begin{example} We illustrate (ii). For $H(R/\Ann F)=(1,3,5,5,3,1)$, we cannot have $H(R/\Ann{(\ell\circ F)}=(1,1,1,1,1)$, as then $H(F)-H(\ell\circ F)_+=(1,3,5,5,3,1)-(0,1,1,1,1,1)=(1,2,4,4,2)$ which is not an O-sequence as $2_1$ cannot grow to $4_2$ \cite[Theorem 4.2.10]{BruH}.
\end{example}
We rely on \cite{Al1} for the proofs of Lemma \ref{ranklem} (i)-(iii). We will reprove Lemma \ref{ranklem} (iv) after introducing the Macaulay duality.

 \subsubsection{Macaulay duality.}\label{Macsec}

 \begin{definition}[Macaulay dual]\label{Macdef} We let the ring $R={\sf k}[x,y,z]$ act on $S={\sf k}[X,Y,Z]$ as
 contraction, that is
 \begin{equation}\label{contracteq}x^i\circ X^j=\begin{cases}X^{j-i}\text{ if $j\ge i$};\\
 $0$ \text{ otherwise,} \end{cases}
 \end{equation}
 similarly for $y^i\circ Y^j, z^i\circ Z^j$, to monomials of $R$ acting on monomials of $S$, and we extend linearly to an action of $R$ on $S$. Given $F\in S$ we denote by $I_F=\Ann(F)=\{h\in R\mid h\circ F=0\}$.  Given a vector subspace $V\subset R_j$ we denote by $V^\perp\subset S_j$ the space $V^\perp=\{w\in S_j\mid v\circ w=0$ for all $v\in V$\}.
 \end{definition}
  For example
 $x^2yz^2\circ (X^3YZ^3-X^2Y^3Z^2)=XZ-Y^2$, and for $F=X^3+Y^3+Z^3, A_F=R/I_F$ where $I_F=\Ann(F)=(xy,xz,yz,x^3-y^3,x^3-z^3)$.
 \begin{lemma}\label{maclem}  There is a 1-1 correspondence between \par
 i. Elements $F\in S_j$, up to non-zero constant multiple, and\par
 ii. AG quotients $A=R/I$ of socle-degree $j$, given by\par
  \begin{align}F&\to  A_F=R/I_F, I_F=\Ann (F), \text{ and}\notag\\
  A=R/I& \text{ of socle degree } j \to  F=I_j^\perp\in S_j.
  \end{align}
  For $\ell\in R_1$ we have $\Ann(\ell\circ F)=I_F:\ell$.
 \end{lemma}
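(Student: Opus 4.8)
The statement to prove is Lemma~\ref{maclem}, the basic Macaulay duality correspondence. Let me sketch a proof proposal.

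The plan is to establish the bijection by exhibiting the two maps and checking they are mutually inverse, then separately verify the ideal quotient formula.

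First I would recall that the contraction action makes $S$ into a graded $R$-module, and that within each degree $j$ the pairing $R_j \times S_j \to {\sf k}$ given by $(h, F) \mapsto h \circ F$ (landing in $S_0 = {\sf k}$) is a perfect pairing of ${\sf k}$-vector spaces — indeed the monomial bases of $R_j$ and $S_j$ are dual to one another under contraction. This is the engine behind everything.

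Then for the map (i) $\to$ (ii): given $F \in S_j$ nonzero, set $I_F = \mathrm{Ann}(F)$. One checks $I_F$ is a homogeneous ideal: if $h \circ F = 0$ then for each homogeneous component $h_d$ of $h$, $h_d \circ F$ lies in degree $j - d$, and these live in distinct degrees, so each vanishes. The algebra $A_F = R/I_F$ is Artinian because $(I_F)_i = R_i$ for $i > j$ (no form of degree $> j$ can fail to annihilate $F$... more precisely every such form kills $F$), and $(I_F)_j$ has codimension one in $R_j$ by the perfect pairing (the forms not killing $F$ are those pairing nontrivially, a hyperplane complement — actually $(I_F)_j = \langle F \rangle^\perp$ which has codimension $1$). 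To see $A_F$ is Gorenstein of socle degree $j$, note $(A_F)_j$ is one-dimensional and one shows the socle is concentrated there: if $h \in R_i$, $i < j$, represents a socle element then $\mathfrak{m} \cdot h \subset I_F$, i.e. $\mathfrak{m} \circ (h \circ F) = 0$, forcing $h \circ F$ to be a constant, but it has degree $j - i > 0$, so $h \circ F = 0$ and $h \in I_F$. Hence $\mathrm{Soc}(A_F) = (A_F)_j$ is one-dimensional, which is the Gorenstein condition for a graded Artinian algebra.

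For the reverse map (ii) $\to$ (i): given graded Artinian Gorenstein $A = R/I$ of socle degree $j$, we have $\dim_{\sf k} (R/I)_j = 1$, so $I_j \subset R_j$ is a hyperplane, and $I_j^\perp \subset S_j$ is one-dimensional; pick a generator $F$. I would then show $I = I_F$. The inclusion $I \subseteq \mathrm{Ann}(F)$ follows degree by degree: for $h \in I_d$ with $d \le j$ and any $g \in R_{j-d}$, $gh \in I_j$, so $g \circ (h \circ F) = (gh)\circ F = 0$; since this holds for all $g \in R_{j-d}$ and the pairing on degree $j-d$ is perfect, $h \circ F = 0$. (For $d > j$ it is automatic.) Conversely $\dim_{\sf k}(R/\mathrm{Ann} F)_d = \dim_{\sf k}(R_d \circ F)$, and using the perfect pairing between $R_d$ and $R_{j-d}$ composed through $F$ one gets $\dim (R_d \circ F) = j$-degree dimension $= \dim(R/I)_d$ by Gorenstein symmetry $\dim(R/I)_d = \dim(R/I)_{j-d} = \mathrm{codim}\, I_{j-d}$; comparing and using $I \subseteq \mathrm{Ann} F$ in every degree forces equality. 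So the two maps invert each other, up to the stated scalar ambiguity on $F$.

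Finally, for the ideal-quotient formula $\mathrm{Ann}(\ell \circ F) = I_F : \ell$: this is a direct computation with the adjointness of multiplication and contraction. For $h \in R$, $h \in \mathrm{Ann}(\ell \circ F)$ iff $h \circ (\ell \circ F) = 0$ iff $(h\ell) \circ F = 0$ (since the $R$-action is by a commutative ring acting on $S$, $h \circ (\ell \circ F) = (h\ell) \circ F$) iff $h\ell \in \mathrm{Ann}(F) = I_F$ iff $h \in I_F : \ell$. I expect the main obstacle — really the only nontrivial point — to be cleanly establishing the perfect-pairing / socle-dimension argument that pins down $\dim(R/\mathrm{Ann} F)_d$ in all degrees and hence the Gorenstein property and the equality $I = I_F$; everything else is formal manipulation of the contraction action. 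I would present this by first isolating the perfect pairing as a preliminary observation, then deducing the rest. Here is the write-up.

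\begin{proof}[Proof sketch of Lemma~\ref{maclem}]
The key preliminary observation is that for each degree $d$ the contraction pairing
$$
R_d\times S_d\longrightarrow {\sf k},\qquad (h,G)\longmapsto h\circ G
$$
is a perfect pairing of ${\sf k}$-vector spaces: the monomial basis $\{x^ay^bz^c\}$ of $R_d$ and the monomial basis $\{X^aY^bZ^c\}$ of $S_d$ are dual to one another. More generally, for $d\le e$ the map $R_{e-d}\to \mathrm{Hom}_{\sf k}(R_d\circ{}\!,\,{\sf k})$ obtained by composing contraction with $S_e$ identifies, for a fixed $F\in S_e$, the space $R_{d}\circ F\subset S_{d'}$ (with $d'=e-d$) with the annihilator of $(I_F)_{d'}$ under the perfect pairing in degree $d'$; hence
\begin{equation}\label{dimpairing}
\dim_{\sf k}(R_d\circ F)=\dim_{\sf k} R_{e-d}-\dim_{\sf k}(\mathrm{Ann}(F))_{e-d}.
\end{equation}

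First, the map $F\mapsto A_F$. Given $0\ne F\in S_j$, the ideal $I_F=\mathrm{Ann}(F)$ is homogeneous, since if $h\circ F=0$ then the homogeneous components $h_d\circ F$ lie in pairwise distinct degrees of $S$ and so each vanishes. We have $(I_F)_i=R_i$ for $i>j$, and $(I_F)_j=\langle F\rangle^{\perp}$ has codimension one in $R_j$ by the perfect pairing; thus $A_F$ is Artinian with $\dim_{\sf k}(A_F)_j=1$. If $h\in R_i$ with $i<j$ represents a socle element of $A_F$, then $\mathfrak m\circ(h\circ F)=0$, so $h\circ F\in S_{j-i}$ is killed by all of $\mathfrak m$, forcing $h\circ F=0$ since $j-i>0$; hence $h\in I_F$. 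Therefore $\mathrm{Soc}(A_F)=(A_F)_j$ is one-dimensional and $A_F$ is AG of socle degree $j$.

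Next, the map $A=R/I\mapsto F$. Since $A$ is AG of socle degree $j$, $\dim_{\sf k}(R/I)_j=1$, so $I_j\subset R_j$ is a hyperplane and $F:=I_j^{\perp}\in S_j$ is well defined up to nonzero scalar. I claim $I=I_F$. If $h\in I_d$ with $d\le j$, then for every $g\in R_{j-d}$ we have $gh\in I_j$, so $g\circ(h\circ F)=(gh)\circ F=0$; as $g$ ranges over $R_{j-d}$ and the pairing in degree $j-d$ is perfect, $h\circ F=0$, i.e. $h\in I_F$. For $d>j$ the inclusion $I_d\subseteq(I_F)_d$ is automatic. Thus $I\subseteq I_F$. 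For equality, apply \eqref{dimpairing} with $e=j$: $\dim_{\sf k}(A_F)_d=\dim_{\sf k}(R_d\circ F)=\dim_{\sf k} R_{j-d}-\dim_{\sf k}(I_F)_{j-d}=\dim_{\sf k}(A_F)_{j-d}$, and similarly the Gorenstein symmetry of $A$ gives $\dim_{\sf k}(R/I)_d=\dim_{\sf k}(R/I)_{j-d}$. Running the inclusion $I\subseteq I_F$ in degree $j-d$ shows $\dim_{\sf k}(A_F)_{j-d}\le\dim_{\sf k}(R/I)_{j-d}$, hence $\dim_{\sf k}(A_F)_d\le\dim_{\sf k}(R/I)_d$; combined with $I_d\subseteq(I_F)_d$ this forces $I_d=(I_F)_d$ in every degree, so $I=I_F$. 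The two constructions are therefore mutually inverse, with the scalar ambiguity in $F$ as stated.

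Finally, for $\ell\in R_1$ and $h\in R$, using that $R$ acts on $S$ through the commutative ring $R$ we have $h\circ(\ell\circ F)=(h\ell)\circ F$; hence $h\in\mathrm{Ann}(\ell\circ F)$ iff $h\ell\in\mathrm{Ann}(F)=I_F$ iff $h\in I_F:\ell$. This proves $\mathrm{Ann}(\ell\circ F)=I_F:\ell$.
\end{proof}
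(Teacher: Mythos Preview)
The paper states this lemma without proof, treating it as the classical Macaulay duality (see e.g.\ \cite{IK}, \cite{Mac}); so there is no ``paper's own proof'' to compare against. Your write-up is essentially the standard argument and is mostly correct, including the Gorenstein-ness of $A_F$, the inclusion $I\subseteq I_F$, and the ideal-quotient identity $\Ann(\ell\circ F)=I_F:\ell$.

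However, there is a genuine gap in your dimension-counting step for the equality $I=I_F$. You deduce $\dim_{\sf k}(A_F)_d\le\dim_{\sf k}(R/I)_d$ from the symmetry of both Hilbert functions together with the inclusion $I\subseteq I_F$ in degree $j-d$; you then claim that ``combined with $I_d\subseteq(I_F)_d$ this forces $I_d=(I_F)_d$.'' But the inclusion $I_d\subseteq(I_F)_d$ gives exactly the \emph{same} inequality $\dim_{\sf k}(A_F)_d\le\dim_{\sf k}(R/I)_d$, not the reverse one, so nothing is forced. The standard repair is to use the Gorenstein hypothesis on $A$ directly: the multiplication pairing $A_d\times A_{j-d}\to A_j\cong{\sf k}$ is perfect. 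If $h\in(I_F)_d$, then for every $g\in R_{j-d}$ one has $(gh)\circ F=0$, hence $gh\in (I_F)_j=I_j$ (since $F=I_j^\perp$), so the class of $h$ in $A_d$ pairs to zero with all of $A_{j-d}$ and must vanish by perfectness; thus $h\in I_d$. This gives $(I_F)_d\subseteq I_d$, and together with your inclusion $I_d\subseteq(I_F)_d$ yields $I=I_F$.
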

 \begin{corollary} The rank matrix $M_{A_F,\ell}$ for an AG algebra has main diagonal the Hilbert function $H(A_F), A_F=R/I_F$ and parallel diagonals the Hilbert functions of $A_{\ell^a\circ F}\,, a=1,2,\ldots$.
 \end{corollary}
 \begin{proof} This follows from Lemma \ref{ranklem} (a), and from $\Ann(\ell\circ F)=I_F:\ell$ from Lemma \ref{maclem}.
 \end{proof}
 \noindent
 \begin{example}[Dependence of JDT on characteristic of $\sf k$]\label{MAex} For $F=X^3+Y^3+Z^3$ and $\ell=x+y+z$ the diagonals of $M_{A,\ell}$ are
 $(1,3,3,1), (1,3,1), (1,1)$ and $1$, respectively arising from $F, \ell\circ F=X^2+Y^2+Z^2, \ell^2\circ F=X+Y+Z$ and $\ell^3\circ F=(3)$ - unless the characteristic of $\sf k$ is three, in which case the last diagonal is $0$.\par
 Here the JDT of $(A,\ell)$ where $A={\sf k}[x,y,z]/I, I=\Ann F=(xy,xz,yz,x^3-y^3,x^3-z^3)$ is $\mathcal S=(4_0,2_1)$ unless the characteristic of $\sf k$ is three, in which case $\mathcal S=(3_0,3_1,1_{1,2})$.  Thus, given the algebra $A$, the possible JDT may depend upon the characteristic $p$. However, the Jordan type $\mathcal S=(j_0,j_1,1\uparrow^{j-1}_1)$ occurs for $F^\prime=X^{j-1}Y+Z^j$ and $\ell=x$, case \#6 in Table \ref{1351fig}, for any infinite field $\sf k$.  We find that the set of occurring JDT for a given Hilbert function $H$ does not depend on the characteristic, for the almost constant Gorenstein sequences $H$ here.
 \end{example}

\begin{proof}
[Proof of (iv) of Lemma \ref{ranklem}] (Correction of proof in \cite[Lemma~3.7]{Al1}).  We let $A=R/\Ann F$ where $F$ is a degree-j form in $S$, and for $\ell\in R_1$ recall that $\ell\circ F$ is the contraction of Equation \ref{contracteq}.  Following \cite{Al1}, the notation $\langle \ell^{i}\circ F\rangle$ stands for the Macaulay inverse system module $\langle \ell^{i}\circ F\rangle= R\circ (\ell^{i}\circ F)$ of the algebra $R/\Ann(\ell^{i}\circ F)$. 
The surjective map $\circ \ell: \langle \ell^{i}\circ F\rangle\twoheadrightarrow \langle \ell^{i+1}\circ F\rangle$ for every $i\geq 0$ induces the following commutative diagram
$$
\xymatrix{
0\ar[r]& \langle \ell^{i+1}\circ F\rangle \ar[r]&\langle \ell^{i}\circ F\rangle  \ar[r]& \langle \ell^{i}\circ F\rangle/ \langle \ell^{i+1}\circ F\rangle \ar[r]&0\\
0\ar[r]& \langle \ell^i\circ F\rangle\ar[r]\ar[u]^{\circ \ell}&\langle \ell^{i-1}\circ F\rangle\ar[r]\ar[u]^{\circ\ell}& \langle \ell^{i-1}\circ F\rangle/ \langle \ell^i\circ F\rangle\ar[r]\ar[u]^{\varphi}&0\\
}
$$
This shows that $\varphi$ is surjective, which implies Equation \eqref{sumdiageq} and Lemma \ref{ranklem}(iv).
\end{proof}

\subsubsection{Map $\Gor(T)$ to the punctual Hilbert scheme.}
 When the Sperner number occurs at least 3 times; i.e. $k\geq 3$ there is a relation between the family $\Gor(T)$ and the punctual Hilbert scheme $\Hilb^s(\mathbb P^2)$, that parametrizes length-$s$ subschemes of the projective plane $\mathbb P^2$.  We assume for all parametrization results that $\sf k$ is an infinite field.  Given a codimension three Gorenstein sequence  $T=(1,3,T_1,s^k,T_1^r,3,1)$ as in Equation \eqref{Gorseqeqn}, we denote by $H_T=(1,3,T_1,\overline{s})=(1,3,T_1,s,s,\ldots)$, and note that $T=\Sym(H_T,j)$, the symmetrization of $H_T$ around socle degree $j/2$:
 \begin{equation}\label{SymHeq}
\qquad\qquad Sym(H,j)_i=\begin{cases} &H_i \text { if } 0\le i\le j/2;\qquad\qquad\qquad\qquad\qquad\qquad\qquad\qquad\qquad\\
    &H_{j-i} \text { if } j/2\le i\le j.\qquad\qquad\qquad\qquad\qquad\qquad\qquad\qquad\qquad\qquad\qquad\qquad.
 \end{cases}
 \end{equation}
 The following is from \cite[Theorem 5.31]{IK}.
 Recall that an annihilating scheme $\Z$ for a form $G\in S_j, S={\sf k}[X,Y,Z]$ is a scheme such that $I_Z\subset I_G=\Ann G$; it is \emph{tight} if the length $|\Z|=$ the Sperner number of $T=H(R/\Ann G)$.
 \begin{lemma}[Map $\Gor(T)$ to  $\Hilb^s(\mathbb P^2)$]\label{maphilblem} Let $T=(1,3,T_1,s^k,T_1^r,3,1)$ as in Equation \eqref{Gorseqeqn}, let $k\ge 3$ and assume $A=R/I\in \Gor(T)$. There is a length-$s$ scheme $\Z\subset \mathbb P^2$ of regularity degree $\tau=\min\{i\mid T_i=s\}$, and Hilbert function $H(R/I_\Z)=H_T$, such that $I_\Z=(I_{\le \tau +2})$.  The scheme $\Z$ is the unique tight annihilating scheme for $A$.\par
 Conversely, let the punctual scheme $\Z \subset \mathbb P^2$ have regularity degree $\tau$, and let $j\ge 2\tau+2$. Then for a general enough $G\in (I_Z)_j^\perp\in S_j$ we have $T=H(R/\Ann G)=\Sym (H_\Z,j)$, and $\Z$ is a tight annihilating scheme for $G$.
 \end{lemma}
 \begin{remark} M. Boij \cite[Theorem 4.2]{Bo2} has shown that there may be arbitrarily long constant subequences
 of a Gorenstein sequence in codimension four or more, without $A\in \Gor(T)$ having necessarily a tight punctual annihilating scheme. Thus, the conclusion of Lemma \ref{maphilblem} is confined to codimension three (see also \cite[Example 6.43ff]{IK}).\par
 It is known that a tight annihilating scheme for $F$ may be chosen to be Gorenstein: see \cite[Proposition 4.3]{BOT}.

 \end{remark}
 The following Lemma is for arbitrary codimension three Gorenstein sequences; for the special case of almost constant $T$ see 
 Corollary \ref{Mcor} and Equation~\eqref{Meq}.
\begin{lemma}\label{diaglem} Fix a codimension three Gorenstein sequence $T$ as in Equation \eqref{Gorseqeqn} with Sperner number (maximum value) $s$ that occurs at least 3 times, $t_a=t_{a+1}=\cdots =t_b=s$. Fix $A\in \Gor(T)$ and fix any linear form $\ell\in A_1$.\par
Then the rank of $m_{\ell^k}: A_i\to A_{i+k}$ is independent of $i$ if  $a\le i\le b-k$. Equivalently, in the rank matrix each diagonal of $M_{A,\ell}$ parallel to the main diagonal is constant within the square corresponding to those degrees $i$ for which $t_i=s$.\par
\end{lemma}
\begin{proof} Let $A=R/I$. By Lemma \ref{maphilblem} there is a scheme $\Z\subset \mathbb P^2$ such that $(I_\Z)_i=I_i$ for $a\le i\le b$. Fix a non-zero-divisor $\alpha\in R/I_\Z$.  That $\ell^k\alpha=\alpha\ell^k$ acting on $R/I_\Z$ for $i\ge a$ shows the desired equality of ranks.
\end{proof}
Assuming $k\geq 3$, the Lemma \ref{diaglem} implies that the entries of $M_{A,\ell}$ satisfy identities
\begin{align*}
    &r_1 : = r_{a,j-a-1}=r_{a+1,j-a-2}=\cdots =r_{\lfloor j/2 \rfloor,j-\lfloor j/2\rfloor-1}\\
    &r_2 : = r_{a,j-a-2}=r_{a+1,j-a-3}=\cdots =r_{\lfloor j/2\rfloor,j-\lfloor j/2\rfloor-2}\\
    &\vdots\\
    &r_{k-1} : = r_{a,k-1}.
\end{align*}
Note that $r_1\ge 1$ since otherwise $\ell\in I$.
\noindent By Lemma \ref{ranklem} (iv), we have
\begin{equation}\label{diffeq}
2r_i\geq r_{i-1}+r_{i+1} \text { for all } i=1,\dots, k-2.
\end{equation}
We define the following sequences of non-negative integers (taking $r_0=s$)\par
 \begin{align}
\overrightarrow{r}:&=(r_1,\ldots  ,r_{k-1});\notag\\
 \overrightarrow{\Delta}:&=\{\Delta r_i, i=1,\dots, k-1\},\quad\Delta r_i=r_{i-1}-r_{i}.\label{deltaeq}
 \end{align}
We let $p(u,t)$ denote the number of partitions of $u$ whose largest part is less or equal $t$, and we let $p(u)$ denote the number of partitions of $u$.  We will sometimes denote $\overrightarrow{\Delta}$ by $\Delta r$ or $\Delta$.

\begin{corollary}[The $\Delta$ sequence]\label{deltacor}  Assume that $T=(1,3,T_1,s^k,T_1^r,3,1)$ with $k\ge 3$ is a codimension three Gorenstein sequence as in Equation \eqref{Gorseqeqn}. We have the following:
		\begin{enumerate}[(a)]
			\item the sequence $\overrightarrow{\Delta}$ is non-increasing;
			\item $\Delta r_i\le s-1$ for all $i$.
			\item $\Sigma_{i=1}^{k-1} \,\Delta r_i =s-r_{k-1}\le s$;
			\item $\Delta r_i=0$ for all $i\ge  r_0+1=s+1$, so $r_{s}=r_{s+1}=\cdots = r_{k-1}$ when $k\ge s+1$. 
		\end{enumerate}
\end{corollary}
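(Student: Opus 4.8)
The plan is to deduce all four statements from three ingredients that are essentially already on the table: the concavity inequality \eqref{diffeq}, the telescoping identity $\sum_{i=1}^{k-1}\Delta r_i = r_0-r_{k-1}$, and the fact that $\overrightarrow r=(r_0,r_1,\dots,r_{k-1})$ is non-increasing with $r_0=s$.

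First I would pin down that $\overrightarrow r$ is non-increasing, hence $\Delta r_i\ge 0$ for every $i$. By Lemma~\ref{diaglem}, for $0\le m\le k-1$ and any $i$ with $a\le i\le b-m$ the rank $\rk(m_{\ell^m}\colon A_i\to A_{i+m})$ is independent of $i$ and equals $r_m$; for $m=0$ this rank is $\dim A_i=s$, consistent with the convention $r_0=s$. Since $m_{\ell^{m+1}}=m_\ell\circ m_{\ell^m}$ on the relevant degrees, $r_{m+1}\le r_m$ for $0\le m\le k-2$, so indeed $\Delta r_i=r_{i-1}-r_i\ge 0$. I would also use the inequality $r_1\ge 1$ already noted (otherwise $\ell\in I$). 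Beyond these, the only input about the $r_i$ is \eqref{diffeq}.

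Items (a)--(c) are then immediate. Rewriting \eqref{diffeq} as $r_{i-1}-r_i\ge r_i-r_{i+1}$, i.e.\ $\Delta r_i\ge\Delta r_{i+1}$ for $i=1,\dots,k-2$, shows $\overrightarrow\Delta$ is non-increasing, which is (a). For (b), (a) gives $\Delta r_i\le\Delta r_1=r_0-r_1=s-r_1\le s-1$. For (c), the telescoping sum gives $\sum_{i=1}^{k-1}\Delta r_i=r_0-r_{k-1}=s-r_{k-1}\le s$, using $r_{k-1}\ge 0$.

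The only statement needing an actual argument is (d), which I would prove by contradiction. If $k\le s+1$ there is nothing to show, since every index $i\le k-1$ satisfies $i\le s$; so assume $k\ge s+2$ (then $s+1\le k-1$). If $\Delta r_{i_0}\ge 1$ for some $i_0\ge s+1$, then by (a) $\Delta r_i\ge\Delta r_{i_0}\ge 1$ for all $i\le i_0$, hence, using $\Delta r_i\ge 0$ for the remaining indices, $\sum_{i=1}^{k-1}\Delta r_i\ge\sum_{i=1}^{s+1}\Delta r_i\ge s+1$, contradicting (c). Thus $\Delta r_i=0$ for all $i\ge s+1$; equivalently $r_{i-1}=r_i$ for those $i$, which is exactly $r_s=r_{s+1}=\cdots=r_{k-1}$ once $k\ge s+1$. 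I expect no real obstacle here; the only points demanding care are the index bookkeeping in (d) (treating the small-$k$ case as vacuous, and checking $s+1\le k-1$ in the main case) and isolating the identity $m_{\ell^{m+1}}=m_\ell\circ m_{\ell^m}$ at the outset, so that monotonicity of $\overrightarrow r$ — which (c) and (d) rely on — is not mistakenly extracted from concavity alone.
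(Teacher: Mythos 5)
Your proof is correct and follows essentially the same route as the paper: (a) is a restatement of the concavity inequality \eqref{diffeq}, (b) uses $r_1\ge 1$ together with (a), (c) is the telescoping identity, and (d) is the observation that a non-increasing sequence of non-negative integers summing to at most $s$ is a partition of some $r\le s$ and hence has at most $s$ nonzero parts. Your extra care in deriving the monotonicity of $\overrightarrow r$ from $m_{\ell^{m+1}}=m_\ell\circ m_{\ell^m}$ is a point the paper leaves implicit, but it does not change the argument.
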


\begin{proof} Item (a)
	is equivalent to Equation \eqref{diffeq}.  Parts (b),(c) follow from Lemma \ref{ranklem}: Part (b) $\Delta r_i\le s-1$ because $r_1\ge 1$ and $\overrightarrow\Delta$ is non-increasing; and Part (c) because we have $\Sigma_i \,\Delta r_i + r_{k-1}=r_0$.  From (a),(b) and (c), we conclude that the sequence $\overrightarrow{\Delta}$ is a partition of some integer $r\le s$, which proves part (d).
\end{proof}

Recall that the almost constant $T$ of  Equation \eqref{Taceq} - satisfy $
T = (1,3,s^k,3,1) \text { with } 3\leq s \le  6 \text { and }  k\ge 1.$  We have
\begin{corollary}[The $\Delta$ sequence for an almost constant Hilbert function]\label{Mcor} Assume $T$ satisfies \eqref{Taceq}. Then the rank matrix $M_{A,\ell}$ of any pair $(A,\ell)$ where $A\in \Gor(T)$ and $\ell\in \mathfrak{m}$ has the following form

\begin{equation}\label{Meq}
M_{A,\ell}=\begin{pmatrix}
\color{blue}1&\color{blue}r_{0,j-1}&\color{blue}r_{0,j-2}&\color{blue}r_{0,j-3}&\color{blue}r_{0,j-4}&\color{blue}r_{0,j-5}&\color{blue}\dots &\color{blue}r_{0,2}&\color{blue}r_{0,1}&\color{blue}r_{0,0}\\\
&\color{blue}3&\color{blue}r_{1,j-2}&\color{blue}r_{1,j-3}&\color{blue}r_{1,j-4}&\color{blue}r_{1,j-5}&\color{blue}\dots &\color{blue}r_{1,2} &\color{blue}r_{1,1}&\color{blue}r_{0,1}\\\
&&s&r_1&r_2&r_3&\dots & r_{k-1}&\textcolor{blue}{r_{1,2}}&\textcolor{blue}{r_{0,2}}\\
&&&s&r_1&r_2&\dots & r_{k-2}&\textcolor{blue}{r_{1,3}}&\textcolor{blue}{r_{0,3}}\\
&&&&s&r_1&\dots & r_{k-3}&\textcolor{blue}{r_{1,4}}&\textcolor{blue}{r_{0,4}}\\
&&&&&\ddots&\ddots&\vdots &\textcolor{blue}{\vdots}&\textcolor{blue}{\vdots}\\
&&&&&&\ddots&r_1&\textcolor{blue}{r_{1,j-3}}&\textcolor{blue}{r_{0,j-3}}\\
&&&&& &&s&\textcolor{blue}{r_{1,j-2}}&\textcolor{blue}{r_{0,j-2}}\\
&&&&& &&&\textcolor{blue}{3}&\textcolor{blue}{r_{0,j-1}}\\
&&&&& &&&&\textcolor{blue}{1}\\
\end{pmatrix},
\end{equation}
where the entries below the main diagonal are all zero. Because of the connection of such an AG algebra with a punctual Hilbert scheme (Lemma~\ref{diaglem}), the $k\times k$ upper triangular matrix in the center (colored black) is a Toeplitz matrix whose main diagonal is $s$ and whose constant diagonals parallel to the main diagonal are $r_1,\ldots, r_{k-1}$.
\end{corollary}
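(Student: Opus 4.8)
\emph{Proof strategy (proposal).} The plan is to read the matrix off directly from the definition of the rank matrix in Equation~\eqref{rankeq}, using Lemma~\ref{ranklem}(i) for the main diagonal and Lemma~\ref{diaglem} for the central block; for an almost constant $T$ everything reduces to locating the degrees that carry the value $s$. Since the socle degree is $j=k+3$, the function $T=(1,3,s^k,3,1)$ has $t_0=t_j=1$, $t_1=t_{j-1}=3$, and $t_2=\cdots=t_{k+1}=s$, so the degrees $i$ with $t_i=s$ form the interval $[2,k+1]$, of length $k$. First I would observe that the $(a,a)$-entry of $M_{A,\ell}$ is, by Equation~\eqref{rankeq}, the rank of $m_{\ell^{0}}\colon A_a\to A_a$, i.e.\ $\dim_{\sf k}A_a=t_a$; equivalently this is Lemma~\ref{ranklem}(i) with $i=0$, since $R/(I:\ell^0)=A$. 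Hence the main diagonal is $(1,3,s,\dots,s,3,1)$, as displayed. The vanishing of the entries strictly below the main diagonal, and the coincidence of the last two rows and columns with the first two (the ``blue'' entries, which are just the values $r_{0,\ast}$ and $r_{1,\ast}$), is part of $M_{A,\ell}$ being the symmetric triangular matrix of Equation~\eqref{rankeq}: nothing is claimed about the blue entries beyond their positions, so there I would only cite the definition.

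Next I would isolate the central $k\times k$ submatrix, the one on the rows and columns indexed by $[2,k+1]$. Its diagonal is constantly $s$ by the previous step, and its sub-diagonal entries vanish since $M_{A,\ell}$ is triangular, so it is upper triangular with $k-1$ superdiagonals. The entry on its $d$-th superdiagonal in row $a$ — present exactly when $2\le a$ and $a+d\le k+1$ — is, by Equation~\eqref{rankeq}, the rank of $m_{\ell^{d}}\colon A_a\to A_{a+d}$. Applying Lemma~\ref{diaglem} to the power $\ell^{d}$ on the interval $[a,b]=[2,k+1]$ shows that this rank is independent of $a$ throughout the admissible range $2\le a\le k+1-d$; writing $r_d$ for the common value $\rk(m_{\ell^{d}}\colon A_2\to A_{2+d})$ — these are the $r_1,\dots,r_{k-1}$ of the statement — the central block has constant diagonals $s,r_1,\dots,r_{k-1}$, which is exactly the asserted upper triangular Toeplitz form. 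For $k\le 2$ the central block is $1\times1$ or $2\times2$, where the Toeplitz property is automatic, so Lemma~\ref{diaglem} (which requires $k\ge 3$) is not needed.

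I do not expect a genuine obstacle at this level: the corollary is the specialization of Lemma~\ref{diaglem} to an almost constant Hilbert function, and the substantive input there — producing, via \cite[Theorem 5.31]{IK}, a length-$s$ subscheme $\Z\subset\mathbb P^2$ with $(I_\Z)_i=I_i$ on the $s$-range, and transporting ranks along a linear non-zero-divisor $\alpha$ on $R/I_\Z$ using $\ell^{d}\alpha=\alpha\ell^{d}$ — is already established. The one thing to handle carefully is the indexing dictionary: translating ``$d$-th superdiagonal of the $k\times k$ central block, row $a$'' into ``$\rk(m_{\ell^{d}}\colon A_a\to A_{a+d})$ with $a,\,a+d\in[2,k+1]$'', and checking that this admissible range coincides with the range in which Lemma~\ref{diaglem} guarantees constancy. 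Once that bookkeeping is set up, the Toeplitz assertion — and with it the entire displayed form~\eqref{Meq} — follows at once.
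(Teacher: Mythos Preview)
Your proposal is correct and follows the paper's intended route: the paper does not give a separate proof of this corollary, introducing it simply with ``Lemma~\ref{diaglem} for such $T$ when $k\ge 3$ implies the following,'' so your write-up is just a careful unpacking of that one sentence. Your handling of the main diagonal via Lemma~\ref{ranklem}(i), the blue border via the definition and symmetry of $M_{A,\ell}$, and the Toeplitz central block via Lemma~\ref{diaglem} (with the trivial $k\le 2$ case noted separately) is exactly what the authors have in mind.
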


\begin{remark}\label{deltarmk}  Note that if $T$ satisfies Equation \ref{Taceq} for $k$ at least $3$, then $\Delta r_1\le 3$ since otherwise the first difference of the diagonals in the rank matrix would not be an $O$-sequence. Take for example $s=6,\Delta r_1=4$: then the diagaonal $H(A)$ and $H(A(1))_+$ of $M_{A,\ell}$ are\par\vskip 0.2cm
	\qquad $\begin{array}{c|ccccc}
		+&1&3&6&6&\ldots\\
		-&0&1&2&2&\ldots\\\hline
		&1&2&4&4&\ldots\\
	\end{array},
	$\par\noindent
	whose difference $(1,2,4,4,\ldots)$ is not an O-sequence, as required by Lemma \ref{ranklem}(ii).
	
\end{remark}\subsubsection{Jordan Degree Type matrix.}\label{JDTmatrixsec}

\begin{definition}\label{def:jordan matrix}
  Suppose that $M_{A,\ell}$ is the rank matrix of an AG algebra $A$ and the linear form $\ell$. Then
the \emph{Jordan degree type matrix (JDT matrix)}, $J_{A,\ell}$, of the pair $(A,\ell)$ is defined to be the upper triangular matrix with the following entries for $u\le v$
\begin{align}\label{eq:jordan matrix}
(J_{A,\ell})_{u,v} :
=& (M_{A,\ell})_{u,v}+(M_{A,\ell})_{u-1,v+1}-(M_{A,\ell})_{u-1,v}-(M_{A,\ell})_{u,v+1},
\end{align}
where we set $(M_{A,\ell})_{u,v}=0$ if either  $u< 0$ or $v< 0$.  We may write this as $J_{A,\ell}=J_M$, as it depends only on the matrix $M$.
\end{definition}
Lemma \ref{ranklem}(iv.) implies that the entries of $J_{A,\ell}$ are all non-negative. For every $0\le u\le v\le j$ if $J_{u,v}\neq 0$ then in the Jordan degree type $\mathcal{S}_{A,\ell}$ the multiplicity of the pair $(p,\nu)$ where $p=v-u+1$ and $\nu=u$ is $\eta(p,\nu)=J_{u,v}$. See \cite{Al1, Al2} for more detail. \par
\begin{remark}[Relation between JDT matrix ($J_{A,\ell}$) and JDT ($\mathcal{S}_{A,\ell}$)]\label{JDTmatrixtoJDTrem} The row of the JDT matrix gives the
index, which is the initial degree of the corresponding entry of $\mathcal S_{A,\ell}$. The diagonal (with main diagonal counted as 1) gives the value, so the $w$-th diagonal parallel to the main diagonal gives an entry $w$ of $\mathcal S$ with subscript the row (first row counted as zero).\par
\end{remark}
\par

\begin{example}\label{rankex}
  Let  $R={\sf k}[x_1,x_2,x_3], A=R/I, I=\Ann(X_1^2X_2X_3)=(x_1^3,x_2^2,x_3^2)$ and $\ell=x_1$. The Hilbert function of $A$ is $T = (1,3,4,3,1)$. The following are the rank matrix and JDT matrix for $(A,\ell)$
  \begin{equation}\label{array1eq}
M_{A,\ell}=\left(\begin{array}{ccccc}
1&1&1&0&0\\
0&3&3&2&0\\
0&0&4&3&1\\
0&0&0&3&1\\
0&0&0&0&1
\end{array}\right).\qquad 
J_{A,\ell}=\left(\begin{array}{ccccc}
0&0&1&0&0\\
0&0&0&2&0\\
0&0&0&0&1\\
0&0&0&0&0\\
0&0&0&0&0
\end{array}\right).
\end{equation}
The JDT for $\ell=x_1$ and $A$ is $\mathcal{S}_{A,\ell}=(3_0,3_1^2,3_2)$, which can be seen from $A\cong {\sf k}[x_1]/(x_1^3)\otimes {\sf k}[x_2,x_3]/(x_2^2,x_3^2)$, so $\mathcal S_{A,\ell}=(3_0)\otimes (1_0,1_1^2, 1^2_2)$.  Or we can recover $\mathcal S_{A,\ell}$ directly from $M_{A,\ell}$ working consecutively, from larger to smaller $a$, since $T(a):=T(R/(I:x_1^a))$ is the $a$-th diagonal above and parallel to the main diagonal $T(A)$ of $M_{A,x_1}$ \cite[Proposition 3.4]{Al1}. So we have from Equation \eqref{array1eq} that  $T(3)=T(R/(I:x_1^3))=0$ implying $x_1^3\in I$.
The diagonal $(1,2,1)= T(R/(I:x_1^2))$; since $x_1^3\in I$ this implies
$\mathcal S_{R/(I:x_1^2),x_1}=(1_0,1_1^2,1_2)$. Then $T(R/(I:x_1))=(1,3,3,1)$ implies $\mathcal S_{R/(I:x_1),x_1}=(2_0,2_1^2,2_2)$, and we conclude from $T(A)=(1,3,4,3,1)$ that $S_{A,x_1}=(3_0,3_1^2,3_2)$. 
\noindent An alternative approach is to determine $\mathcal{S}_{A,\ell}$ from $J_{A,\ell}$ using Definition \ref{def:jordan matrix} and the discussion after it. By Equation~\eqref{eq:jordan matrix} we get that the only non-zero entries of $J_{A,\ell}$ are $(J_{A,\ell})_{0,2}=1$,$(J_{A,\ell})_{1,3}=2$, and $(J_{A,\ell})_{2,4}=1$. Therefore, $\mathcal{S}_{A,\ell}=(3_0,3_1^2,3_2)$. 
\end{example}

 An example of constructing an algebra from a particular rank matrix for $T=(1,3,4,3,1)$ is found in \cite[Example 4.1]{Al3}. See also Section \ref{exsec} below.\par 
As in the Example \ref{rankex} the rank matrix of a graded AG algebra determines the Jordan degree type, and vice-versa \cite[Proposition 3.12]{Al1}. That the rank matrix determines the JDT is essentially the result that the ranks of mixed Hessians determine JDT (see \cite{GoZa,CGo}): this rests on the work of T.~Harima and J. Watanabe on central simple modules \cite{HW}.  

 \section{Potential Jordan degree types.}\label{potentialsec}
 In this section we assume that $\sf k$ is an infinite field of arbitrary characteristic. We denote by $\mathcal J(T)$ the set of Jordan degree types (JDT) possible for pairs $(A,\ell), A\in \Gor(T), \ell\in A_1$. We recall the Jordan degree types for complete intersections of codimension two, then in codimension three determine the potential Jordan types for almost constant Hilbert functions of each Sperner number  $3,4,5,6$ (Section \ref{cod3sec}).

 \subsection{Codimension two.}\label{cod2sec}
The number of possible rank matrices, equivalently JDT, for codimension two complete intersection (CI) Hilbert functions was obtained  explicitly in \cite{AIK}. We first state a portion of that result - giving the count. We then in an Example list the rank matrices in codimension two corresponding to Sperner numbers $s=2,3$. 

Recall that the Hilbert function of a codimension two complete intersection of Sperner number $d$ and socle degree $j=k+2d-2$ satisfies
\begin{equation}\label{CIHF}
T=(1,2,\ldots,d-1,d^k, d-1,\ldots, 2, 1) \text { where } k\ge 1.
\end{equation}

For such $T$, the automorphism $\iota$ on $\mathcal J(T)$ flips (switches rows and columns) in the smallest rectangle in the Jordan type, and makes the corresponding change for the JDT (see \cite[Theorem 3.23]{AIK}). For example, the Jordan degree type $\mathcal{S}=((j+1)_0, (j-1)_1,(j-1)_2)$ satisfies $\iota(\mathcal{S})=((j+1)_0, 2\uparrow_1^{j-1})$.

The following is adapted from \cite[Theorems 2.21, 2.35]{AIK}, which have further very detailed combinatorial data. 
\begin{theorem}\cite{AIK}\label{cod2lem}
Let $T$ satisfy Equation \eqref{CIHF}.  When $k=1$ there are $2^{d-1}$ Jordan types possible for $A\in CI(T)$, all weak-Lefschetz - with $d$ parts. When $k\ge 2$ there are $2^d$ Jordan types possible for $A\in CI(T)$: half are weak Lefschetz - with $d$ parts, and half, with $d+k-1$ parts  are of the form $\iota(\mathcal S)$.\par
When $k=1$, the smallest part $\alpha$ of $\mathcal S$ occurs $\alpha$ times, so $\iota(\mathcal S)=\mathcal S$.

\end{theorem}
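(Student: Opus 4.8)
\textbf{Proof proposal for Theorem \ref{cod2lem}.}

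The plan is to reduce everything to the combinatorics of rank matrices in codimension two, exactly as developed in \cite{AIK}, and then count. First I would recall that for a codimension two complete intersection $A = R'/I$ with $R' = {\sf k}[y,z]$, the Hilbert function $T$ in Equation \eqref{CIHF} is unimodal with a flat top $d^k$, and that for a general linear form $\ell$ the pair $(A,\ell)$ is strong Lefschetz (Brian\c{c}on, or \cite{Br} when char ${\sf k}=0$; but in the codimension two CI case one has a very explicit description over any infinite field via the tensor structure ${\sf k}[y,z]/(f,g)$). The key structural input is that the rank matrix $M_{A,\ell}$ of such a pair is determined by its sequence of parallel diagonals, each of which is itself the Hilbert function of a codimension $\le 2$ Gorenstein (hence CI) quotient $R'/(I:\ell^a)$, by Lemma \ref{ranklem}(i) (here specialized to codimension two). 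So a JDT for $(A,\ell)$ amounts to a choice, for each $a=0,1,\dots$, of how the "staircase" $T$ decreases as we pass from one diagonal to the next, subject to the O-sequence and adjacent-square inequalities of Lemma \ref{ranklem}(ii)--(iv).

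Next I would make this choice explicit. Going from the main diagonal $T=(1,2,\dots,d-1,d^k,d-1,\dots,1)$ to the next one $T(R'/(I:\ell))$, the drop is governed by which of the two CI generators (of degrees $d_1 \le d_2$, $d_1 + d_2 = j+2$ for socle degree... actually for the flat-top shape $d_1 = d$, $d_2 = d+k$ is not forced; rather the relevant feature is that there are exactly $d$ "long" strings emanating from degree $0$ and the string lengths are controlled by a single binary choice at each of $d$ places, or $d-1$ when $k=1$). Concretely, following \cite[\S 2]{AIK}, the JDT of $(A,\ell)$ for a codimension two CI is encoded by a word in $\{0,1\}$ of length $d$ (length $d-1$ when $k=1$, because of the extra symmetry that pins down the behavior at the very top of the staircase). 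Each such admissible word gives a distinct rank matrix, hence a distinct JDT by \cite[Proposition 3.12]{Al1}, and conversely every JDT arises this way. This yields the counts $2^{d-1}$ when $k=1$ and $2^d$ when $k\ge 2$.

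Then I would separate the weak-Lefschetz ones: $(A,\ell)$ is WL iff $P_{A,\ell}$ has exactly $s = \max T$ parts (stated in the introduction), which for $T$ as in \eqref{CIHF} is $d$ parts when $k \ge 2$ (and the "top of the staircase" plays no role), and corresponds precisely to the subset of words where the drop from the flat top is "as balanced as possible" — this is exactly half of the $2^d$ words, namely those compatible with the generic (strong Lefschetz) degeneration being a specialization. The other half, of the form $\iota(\mathcal S)$ where $\iota$ flips the smallest rectangle inside the Jordan type (\cite[Theorem 3.23]{AIK}), then have $d + k - 1$ parts: flipping an $m \times n$ rectangle with $n \le m$ replaces $n$ copies of a part by $m$ copies of a smaller part, increasing the number of parts by $m - n = k - 1$, so $d \mapsto d + k - 1$. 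For the case $k=1$: here the flat top has length $1$, the staircase is symmetric, and the involution $\iota$ flips a square (the smallest rectangle is a square since $k-1=0$ forces $m=n$), so $\iota(\mathcal S)=\mathcal S$ and all $2^{d-1}$ JDT are automatically WL with $d$ parts; moreover, tracing through the word encoding, the smallest part $\alpha$ of $\mathcal S$ appears exactly $\alpha$ times because the symmetry of $T$ forces the symmetry $\eta(p,\nu)=\eta(p,j+1-\nu-p)$ of Lemma \ref{JDTsymlem} to collapse the relevant multiplicities — and a fixed point of a square-flip is exactly a partition whose smallest part, of size $\alpha$, has multiplicity $\alpha$. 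I expect the main obstacle to be the bookkeeping in the $k=1$ versus $k\ge 2$ dichotomy — precisely checking that the "top of the staircase" contributes one free binary choice when $k\ge 2$ but is rigidly determined when $k = 1$ — and, relatedly, verifying that distinct admissible words really do give distinct rank matrices (no two words collide), which is where one must invoke the full force of \cite[Theorems 2.21, 2.35]{AIK} rather than re-deriving it.
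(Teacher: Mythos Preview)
The paper does not prove Theorem \ref{cod2lem} at all: it is stated with the attribution \cite{AIK} and the preceding sentence explicitly says it is ``adapted from \cite[Theorems 2.21, 2.35]{AIK}''; no proof environment follows, and the paper moves directly to Example \ref{cod2ex}. So there is nothing in the paper to compare your proposal against --- you have written more than the authors did, since they simply cite the result.

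As a sketch of the argument in \cite{AIK}, your outline is broadly on the right track (rank-matrix diagonals, a binary encoding of the successive drops, the involution $\iota$ flipping the smallest rectangle), and you are honest that the actual verification that distinct words give distinct rank matrices and that exactly half are weak Lefschetz must be deferred to \cite[Theorems 2.21, 2.35]{AIK}. A few of your heuristic justifications are soft: the sentence explaining why ``exactly half'' of the $2^d$ words are WL (``those compatible with the generic \ldots\ degeneration being a specialization'') does not actually pin down which half, and the claim that for $k=1$ the smallest rectangle is a square needs the prior statement (that the smallest part $\alpha$ occurs $\alpha$ times) as input rather than as a consequence. These are not errors so much as places where you are gesturing at \cite{AIK} rather than reproving it --- which, given that the paper under review does exactly the same thing, is entirely appropriate here.
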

\begin{example}\label{cod2ex}
In Tables \ref{table:122} and \ref{table:123}, respectively, we list the possible rank matrices for codimension two, almost constant CI sequences of Sperner numbers 2 and 3, respectively. \par
Note that in Table  \ref{table:122}, the first row $\Delta=0$ corresponds to the two
JDT $((j+1)_0,(j-1)_1)$ and $(j_0,j_1)$. The second row corresponds to $\iota(((j+1)_0,(j-1)_1)=(j+1)_0, 1\uparrow_1^{j-1})$ and the third row to  $\iota((j_0,j_1))=(2\uparrow_0^{j-1}).$
\end{example}
Another aspect of this case, the role of $\Delta$ in determining repeated parts of $\mathcal S_{A,\ell}$, is shown in Proposition \ref{partsprop} and Example \ref{reppartex}(c) below.

\begin{table}
 \begin{center}
    \begin{tabular}{|c|c|c|c|c|}
    \hline
     $\#$& $\overrightarrow{\Delta}  $   &  $\overrightarrow r $&\# rk matrices &JDT\\
        \hline
        $1_a$& $(0,0)$&$(2,2,\dots,2)$&2&$((j+1)_0, (j-1)_1) $\\
        $1_b$&&&&$(j_0,j_1) $\\\hline
        $2_a$& $(1,0)$&$(2,1,\dots,1)$&1&$((j+1)_0, 1\uparrow_1^{j-1})$\\\hline
         $3_b$& $(1,1)$&$(2,1,0,\dots,0)$&1&$(2\uparrow_0^{j-1})$\\
         \hline 
    \end{tabular}
\end{center}
\caption{Case $s=2, k\ge 2;$ codimension two: the action of $\iota$ takes $1_b$ to $3_b$.}\label{table:122}
\end{table}
\begin{table}
 \begin{center} 
    \begin{tabular}{|c|c|c|c|c|}
    \hline
     $\#$& $\overrightarrow{\Delta} $   &  $\overrightarrow r $&\# rk matrices&JDT\\
        \hline
        $1_a$& $(0,0,0)$&$(3,3,3,3\dots,3)$&4&$((j+1)_0, (j-1)_1, (j-3)_2) $\\
        $1_b$&&&&$(j_0,j_1,(j-3)_2)$\\
        $1_c$&&&&$((j+1)_0,(j-2)_{1,2})$\\
        $1_d$&&&&$((j-2)\uparrow_0^2)$\\\hline
        $2_a$& $(1,0,0)$&$(3,2,2,2,\dots,2)$&2&$((j+1)_0, (j-1)_1, 1\uparrow_2^{j-2}) $\\
       $2_b$&&&&$(j_0,j_1,1\uparrow_2^{j-2})$\\\hline
      $3_c$& $(1,1,0)$&$(3,2,1,1,\dots,1)$&1&$((j+1)_0,2\uparrow_1^{j-2})$\\\hline
       $4_d$& $(1,1,1)$&$(3,2,1,0,\dots,0)$&1&$3\uparrow_0^{j-2}$\\
         \hline 
    \end{tabular}
\end{center}
\caption{Case $s=3, k\ge 2$, codimension 2, $T=(1,2,3^k,2,1)$. The action of $\iota$ takes $1_a$ to $2_a$. See \cite[Figure 10]{AIK}.}\label{table:123}\par\vskip 0.2cm
 
\end{table}

\subsection{Codimension three. }\label{cod3sec}
In this section we count the number of potential rank matrices and Jordan degree types for codimension three AG algebras with almost constant Hilbert functions. More precisely, for each Sperner number $3\le s\le 6$ we count all possible rank matrices for each possible sequence $\overrightarrow \Delta$ of Equation \eqref{deltaeq} satisfying the conditions in Lemma \ref{ranklem} and Corollary \ref{deltacor}. 
 \subsubsection{Case $s=3$.}
 Let $T=(1,3^k,1)$, $k\ge 3$. 
 Here we assume that $T$ is the Hilbert function of codimension three AG algebras. \par In this case the rank matrix has the following form.
\begin{equation}\label{eq: s=3 matrix}
M=\begin{pmatrix}
\color{blue}1&\color{blue}r_{0,j-1}&\color{blue}r_{0,j-2}&\color{blue}r_{0,j-3}&\color{blue}r_{0,j-4}&\color{blue}\dots &\color{blue}r_{0,1}&\color{blue}r_{0,0}\\\
&3&r_1&r_2&r_3&\dots & r_{k-1}&\textcolor{blue}{r_{0,1}}\\
&&3&r_1&r_2&\dots & r_{k-2}&\textcolor{blue}{r_{0,2}}\\
&&&3&r_1&\dots & r_{k-3}&\textcolor{blue}{r_{0,3}}\\
&&&&\ddots&\ddots&\vdots &\textcolor{blue}{\vdots}\\
&&&&&\ddots&r_1&\textcolor{blue}{r_{0,j-2}}\\
&&&& &&3&\textcolor{blue}{r_{0,j-1}}\\
&&&& &&&\textcolor{blue}{1}\\
\end{pmatrix}.
\end{equation} 
\begin{theorem}\label{s=3thm}
     Let $T = (1,3^k,1)$  for $k\ge 3$. There are 8 potential Jordan degree types for pairs $(A,\ell)$, where $A\in \Gor(T)$ and $\ell\in A_1.$
     \
 \end{theorem}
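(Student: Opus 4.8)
The plan is to enumerate, via the $\overrightarrow{\Delta}$ sequence, all rank matrices of the form \eqref{eq: s=3 matrix} that are consistent with Lemma~\ref{ranklem} and Corollary~\ref{deltacor}, and then invoke the dictionary ``rank matrix $\leftrightarrow$ JDT'' (Definition~\ref{def:jordan matrix} and the discussion after it, together with \cite[Proposition 3.12]{Al1}) to pass from the count of rank matrices to the count of JDT. Since $s=3$, Corollary~\ref{deltacor} tells us $\overrightarrow{\Delta}=(\Delta r_1,\dots,\Delta r_{k-1})$ is a non-increasing sequence of non-negative integers with each $\Delta r_i\le s-1=2$ and $\sum_i \Delta r_i = 3-r_{k-1}\le 3$; moreover by Remark~\ref{deltarmk} we have $\Delta r_1\le 3$, which here is automatic. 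So $\overrightarrow{\Delta}$ is one of the partitions of $0,1,2,3$ with parts $\le 2$ and at most $k-1\ge 2$ parts: namely $(0,0,\dots)$, $(1,0,\dots)$, $(1,1,0,\dots)$, $(2,0,\dots)$, $(2,1,0,\dots)$, $(2,1,1,0,\dots)$, $(1,1,1,0,\dots)$. That is seven candidate $\Delta$-profiles; one then records for each the corresponding $\overrightarrow{r}=(r_1,\dots,r_{k-1})$ obtained by $r_0=3$ and $r_i=r_{i-1}-\Delta r_i$.

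Next I would, for each such central Toeplitz block, determine the compatible choices for the outermost two off-diagonals (the blue entries $r_{0,j-1}, r_{1,j-2}$ etc. governed by the ``symmetric, $O$-sequence difference'' constraints of Lemma~\ref{ranklem}(i)--(iv)): by Lemma~\ref{ranklem}(i) the first diagonal off the main one is the Hilbert function of $R/(I:\ell)$, hence itself a codimension $\le 3$ Gorenstein sequence, and the only freedom is whether this sub-Hilbert function has its second entry equal to $3$ or $2$ (the linear form may or may not be a non-zerodivisor up to the relevant degree). This is where the count can exceed the number of $\Delta$-profiles: a given $\Delta$-profile may admit more than one completion of the matrix, yielding the discrepancy between seven profiles and eight JDT. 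Concretely I expect the $\Delta=(0,0,\dots,0)$ row to split into two rank matrices — mirroring exactly the codimension-two phenomenon in Table~\ref{table:123}, row $1_a$, where the four CI matrices came from the binary choices in the corners — while the remaining six $\Delta$-profiles each give a unique completion, for a total of $2+6=8$. One must of course check that the two completions in the $\Delta=(0,0,\dots)$ case are genuinely distinct as JDT (they are: one is the strong-Lefschetz type $(j+1)_0,(j-1)_1,\dots$ and the other is not) and that no further splitting occurs for the nonzero $\Delta$-profiles; both checks are routine verifications using \eqref{eq:jordan matrix} and the symmetry of Lemma~\ref{JDTsymlem}.

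Finally I would translate each of the eight surviving rank matrices into its JDT via Remark~\ref{JDTmatrixtoJDTrem} (rows give the initial degree, the diagonal index gives the part size), record the resulting eight Jordan degree types, and cross-reference them with Table~\ref{s=3tab} / Table~\ref{1351fig}. The heart of the argument — and the step most prone to error — is not the enumeration of $\overrightarrow{\Delta}$, which is a short finite list, but rather the careful bookkeeping of the corner freedoms: showing that the constraints \eqref{sumdiageq}, the $O$-sequence condition on consecutive diagonal differences, and the Gorenstein symmetry together force exactly the claimed number of completions for each $\Delta$-profile, with no spurious matrices that fail to be realized and no realizable matrix omitted. Since Theorem~\ref{s=3thm} is only the ``potential JDT'' statement (realizability for $s=3$ is deferred to Theorem~\ref{k3prop}), here I need only the combinatorial upper bound together with the observation that each such matrix does satisfy all the stated necessary conditions, so that the count is exactly $8$.
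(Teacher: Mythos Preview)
Your overall strategy matches the paper's: enumerate the admissible $\overrightarrow{\Delta}$ via Corollary~\ref{deltacor}, then for each determine the freedom in the corner entries of \eqref{eq: s=3 matrix}. But the execution has two concrete errors that happen to cancel.

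First, the list of $\overrightarrow{\Delta}$-profiles is wrong. You correctly state the constraint ``partitions of $0,1,2,3$ with parts $\le 2$'', which gives exactly six sequences: $(0,\dots)$, $(1,0,\dots)$, $(2,0,\dots)$, $(1,1,0,\dots)$, $(2,1,0,\dots)$, $(1,1,1,0,\dots)$. Your seventh profile $(2,1,1,0,\dots)$ has $\sum\Delta r_i=4>3$, violating Corollary~\ref{deltacor}(c); it would force $r_{k-1}=-1$. So there are six profiles, not seven, matching the paper's count $p(0)+p(1)+p(2)+p(3,2)=6$.

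Second, the claim that only $\overrightarrow{\Delta}=(0,0,\dots)$ admits two completions is false. The paper observes that for $s=3$ the only undetermined corner data are $r_{0,1}$ and $r_{0,0}$ (everything else in the first row is forced because each diagonal is a Gorenstein sequence of socle degree $\le j-2$, hence its $0$-th entry is $1$ or $0$ according as the diagonal is nonzero). For $\overrightarrow{\Delta}=(1,0,\dots)$ one has $r_{k-1}=2$, and the second diagonal is the codimension-two CI sequence $(1,2^{k-1},1)$; by Table~\ref{table:122} row~1 this admits \emph{two} compatible corners, giving two rank matrices here as well. The remaining four profiles each give a unique completion. Hence the correct tally is $2+2+1+1+1+1=8$, not $2+6$. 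Your remark about the second entry of the sub-Hilbert function being ``$3$ or $2$'' is also misplaced: for $s=3$ that entry is $r_{0,j-1}$, always equal to $1$ whenever $\ell\ne 0$, so the bifurcation does not arise there but in the top-right $2\times 2$ block $\begin{pmatrix} r_{0,1}&r_{0,0}\\ r_{k-1}&r_{0,1}\end{pmatrix}$.
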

 \begin{proof}
     First, we count possible sequences $\overrightarrow\Delta$, obtaining $p(0)+p(1)+p(2)+p(3,2)=1+1+2+2=6$.  We list them in Table \ref{s=3tab}. 
     Every rank matrix is determined by the sequence $\overrightarrow r$ (equivalently $\overrightarrow \Delta$ and entries $r_{0,0},\dots, r_{0,j-1}$ in the matrix given in Equation \eqref{eq: s=3 matrix}. But since each diagonal of the rank matrix is a Gorenstein sequence the entries $r_{0,2},r_{0,3}, \dots r_{0,j-1}$ are determined by the sequence $\overrightarrow{r}$. Therefore, for a given sequence $\overrightarrow r$ and values $r_{0,1}$ and $r_{0,0}$ the rank matrix is determined. Observe that $r_{0,1}$ and $r_{0,0}$ are the entries of the top right $2\times 2$ submatrix of $M$ as in Equation \eqref{eq: s=3 matrix}. Thus to find the possible rank matrices in this case we find the possible sequences $\overrightarrow{r}$ and for each of these sequences we find the possible submatrices $\begin{pmatrix}r_{0,1}&r_{0,0}\\r_{k-1}&r_{0,1}
 \end{pmatrix}$.\par
     For $\overrightarrow\Delta = (0,0,0)$, we have $\overrightarrow r=(3,3,\dots ,3)$. So in particular $r_{k-1}=3$ and there are two possible choices for the top right corner of the rank matrix: $$\begin{pmatrix}
         1&1\\3&1
     \end{pmatrix}, \begin{pmatrix}
         1&0\\3&1
     \end{pmatrix}.$$\par
     For $\overrightarrow\Delta = (1,0,0),(1,1,0),$ and $(1,1,1)$ the second diagonal of the rank matrix is $(1,2^{k-1},1)$ that is a codimension two Gorenstein Hilbert function. Therefore, the number of possible rank matrices can be obtained from the corresponding row of Table \ref{table:122}.\par
For $\overrightarrow\Delta = (2,0,0)$ we have $r_{k-1}=1$ and this forces $r_{0,1}=r_{0,0}=1$. Also, clearly for $\overrightarrow\Delta = (2,1,0)$ we have $r_{0,1}=r_{0,0}=0$ forced by $r_{k-1}=0.$
 \end{proof}
 \begin{table}[h]
 \begin{center}
    \begin{tabular}{|c|c|c|c|c|c|}
    \hline
     $\#$& ${ \overrightarrow{\Delta}} $   &  $\overrightarrow{r}=(r_0, \dots , r_{k-1})$&\# rk matrices &JDT&Table \ref{1351fig}\\
        \hline
        1a& $(0,0,0)$&$(3,3,3,3\dots,3)$&2&$((j+1)_0, ((j-1)_1)^2)$&\#1\\
         1b&&&&$(j_0, j_1,(j-1)_1)$ &\#5\\\hline
        2a& $(1,0,0)$&$(3,2,2,2,\dots,2)$&2&$((j+1)_0, (j-1)_1,1\uparrow_1^{j-1})$&\#2\\
        2b&&&&$(j_0,j_1,1\uparrow_1^{j-1})$&\#6\\\hline
        3& $(2,0,0)$&$(3,1,1,1,\dots,1)$&1&$((j+1)_0,(1\uparrow^{j-1}_1)^2)$&\#4\\\hline
        4& $(1,1,0)$&$(3,2,1,1,\dots,1)$&1&$((j+1)_0, 2\uparrow_1^{j-2}, 1_{1,j-1})$&\#3\\\hline
        5& $(2,1,0)$&$(3,1,0,0,\dots,0)$&1&$(2\uparrow_0^{j-1},1\uparrow_1^{j-1})$&\#8\\\hline
        6& $(1,1,1)$&$(3,2,1,0,\dots,0)$&1&$(3\uparrow_0^{j-2},1_{1,j-1})$&\#7\\
         \hline 
    \end{tabular}
\end{center}
\caption{Case $s=3$, $T=(1,3^k,1)$, $k\ge 3$, codimension three; list of JDTs and reference to corresponding entry of Table \ref{1351fig}.} \label{s=3tab}
\end{table}
\normalsize

\begin{example}\label{JDTex}
\begin{enumerate}
    \item [(i)] When $s=3,j=4$ and $\overrightarrow{\Delta}=(0,0,0)$ then $\overrightarrow{r}=(3,3,3)$ and the upper right corner of the rank matrix $M$ is one of the following
 (they correspond respectively to \#1a, \#1b in Table~\ref{s=3tab}, for $j=4$)\vskip 0.2cm
 \par
 \qquad$\begin{pmatrix}1&\color{red}1\\\color{red}3&1
 \end{pmatrix}$  \,  $\mathcal{S}=(5_0, (3_1)^2)$; \quad or 
 $\begin{pmatrix}{\color{red}1}&0\\\color{red}{3}&\color{red}{1}
 \end{pmatrix}$\,  $\mathcal{S}=(4_0, 4_1,3_1)$.\par
\item[(ii)]  When $s=3,j=4$ and $\overrightarrow{\Delta}=(1,0,0)$ then $\overrightarrow{r}=(3,2,2,\ldots)$ and the upper right corner of $M$ is one of the following
 (we give the JDT for $j=4$ here - they are, respectively, \#2a, \#2b in Table~\ref{s=3tab})\vskip 0.2cm
 \par
 $\begin{pmatrix}1&\color{red}1\\\color{red}2&1
 \end{pmatrix}$  \, $\mathcal{S}=((5)_0, (3)_1,1\uparrow_1^3)$; \quad or 
 $\begin{pmatrix}{\color{red}1}&0\\2&\color{red}{1}
 \end{pmatrix}$\,  $\mathcal{S}=(4_0, 4_1,1\uparrow_1^3)$.\vskip 0.2cm
\item[(iii)] When $s=3, j=4$ and $\overrightarrow{\Delta}=(2,0,0)$, then $\overrightarrow{r}=(3,1,1)$ we have\par\vskip 0.2cm
M=$\begin{pmatrix} 1&1&1&1&\color{red}1\\
&\color{red}3&1&1&1\\
&&\color{red}3&1&1\\
&&&\color{red}3&1\\
&&&&1
\end{pmatrix},$\quad  
$J=\begin{pmatrix} 0&0&0&0&1\\
&2&0&0&0\\
&&2&0&0\\
&&&2&0\\
&&&&0
\end{pmatrix},$ \,  $\mathcal{S}=(5_0, (1\uparrow_1^3)^2)$,
 \par\vskip 0.2cm\noindent
 as in the row 3 of Table \ref{s=3tab} for $j=4$. The JDT can be read off simply from the JDT matrix $J$ of Definition \ref{def:jordan matrix}, as in Remark \ref{JDTmatrixtoJDTrem}.
 \end{enumerate}
  \end{example}
\subsubsection{Case $s=4$.}
Let $T=(1,3,4^k,3,1)$, $k\ge 3$.
The number of possible sequences $\overrightarrow{\Delta}$ (so $\overrightarrow{r}$) is equal to $$p(0)+p(1)+p(2)+p(3)+p(4,3)=1+1+2+3+(5-1)=11.$$ In the following theorem, we find the number of rank matrices, which are of the form \eqref{Meq}, for each of those sequences and list them in Table \ref{s4tab} below.
 \begin{theorem}\label{4kthm}
    Let $T=(1,3,4^k,3,1)$ for $k\ge 3$. 
    There are 26 potential Jordan degree types for $(A,\ell)$ where  $A\in \mathrm{Gor}(T)$ and $\ell\in A_1$. 
\end{theorem}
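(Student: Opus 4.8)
The plan is to mirror the strategy already carried out for $s=3$ in Theorem~\ref{s=3thm}, but now with the larger Sperner number $s=4$. First I would enumerate the admissible $\overrightarrow{\Delta}$ sequences. By Corollary~\ref{deltacor}, $\overrightarrow{\Delta}$ is a partition of an integer $r\le s=4$ with all parts at most $s-1=3$; moreover Remark~\ref{deltarmk} forces $\Delta r_1\le 3$, which is already implied here. This gives exactly $p(0)+p(1)+p(2)+p(3)+p(4,3)=1+1+2+3+4=11$ sequences, as recorded in the statement. For each such $\overrightarrow{\Delta}$ one recovers $\overrightarrow{r}=(r_0=4,r_1,\dots,r_{k-1})$, and hence the entire black Toeplitz block in the center of the matrix~\eqref{Meq}.

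Next, for each fixed $\overrightarrow{r}$ I would count the number of completions to a full rank matrix $M_{A,\ell}$ satisfying the conditions of Lemma~\ref{ranklem} and Corollary~\ref{Mcor}. The key observation, exactly as in the $s=3$ proof, is that the blue entries of~\eqref{Meq} are almost entirely determined: each diagonal parallel to the main diagonal is a Gorenstein sequence (Lemma~\ref{ranklem}(i)), and for an almost constant $T$ of Sperner number $4$ these outer diagonals are Gorenstein sequences starting $(1,3,\dots)$ or $(1,2,\dots)$ etc., whose tails are pinned down by the already-chosen $r_i$'s. So the only genuine freedom lies in a small corner block of $M_{A,\ell}$ — the top-right $2\times 2$ (or slightly larger) submatrix governing the entries $r_{0,0},r_{0,1},r_{1,2}$ and the analogous reflected ones — and these must be chosen compatibly with: the row/column monotonicity (Lemma~\ref{ranklem}(iii)), the adjacent-square inequality~\eqref{sumdiageq}, the O-sequence condition on differences of adjacent diagonals (Lemma~\ref{ranklem}(ii)), and the fact that the second diagonal $(1,3,r_1,\dots)$ must itself be a codimension-three Gorenstein sequence. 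For those $\overrightarrow{\Delta}$ whose second diagonal collapses to a codimension-two Gorenstein sequence (namely when $\Delta r_1\ge 1$ drops the value to $3$ or below early), one can import the count directly from the codimension-two tables~\ref{table:122} and~\ref{table:123}, exactly as was done for $s=3$.

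I would then organize the bookkeeping into a table (Table~\ref{s4tab}) listing, row by row, each $\overrightarrow{\Delta}$, its $\overrightarrow{r}$, the number of valid corner completions, and the resulting JDT (read off from the JDT matrix $J_{A,\ell}$ via Definition~\ref{def:jordan matrix} and Remark~\ref{JDTmatrixtoJDTrem}). Summing the ``\# rk matrices'' column over all $11$ rows should yield $26$. The cases needing care are the ``large $\Delta$'' ones where $r_{k-1}$ becomes small ($1$ or $0$): when $r_{k-1}=0$ one has $\ell^{k-1}\in I:\mathfrak m$-type degeneracies forcing the corner entries to vanish, and when $r_{k-1}=1$ the corner is similarly rigid — these contribute exactly one matrix each, just as in rows $3$ and $5$ of Table~\ref{s=3tab}.

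The main obstacle I expect is not any single deep step but the combinatorial case-check: ensuring that for each of the $11$ sequences $\overrightarrow{\Delta}$ I have found \emph{all} admissible corner completions and discarded exactly those violating~\eqref{sumdiageq} or the diagonal-difference O-sequence condition. In particular the ``middle'' values $\overrightarrow{\Delta}=(1,0,\dots),(2,0,\dots),(1,1,0,\dots),(2,1,0,\dots),(1,1,1,0,\dots),(3,0,\dots)$ each give a second diagonal of the form $(1,3,3,\dots)$ or $(1,3,2,\dots)$ or $(1,3,1,\dots)$ which must be re-examined against the O-sequence/Gorenstein constraints before multiplying by the corner count; a few prospective completions that pass the purely local inequalities will fail the global diagonal conditions, and tracking these exclusions carefully is where errors would creep in. Verifying that the $26$ prospective JDT all actually occur is deferred to Theorem~\ref{s=4thm} and is not part of this count.
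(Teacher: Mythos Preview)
Your overall strategy matches the paper's, but there is one substantive gap in the bookkeeping. When $\Delta r_1=1$, i.e.\ $r_1=3$, the entry $r_{1,j-2}$ (the second term of the second diagonal) is \emph{not} determined by $\overrightarrow r$: it can be either $3$ or $2$. If $r_{1,j-2}=3$, the second diagonal is $(1,3^{k+1},1)$, a codimension-\emph{three} Gorenstein sequence, and the count of completions must be read off recursively from Table~\ref{s=3tab}. If $r_{1,j-2}=2$, the second diagonal is $(1,2,3^{k-1},2,1)$ and Table~\ref{table:123} applies. Your proposal only invokes the codimension-two Tables~\ref{table:122} and~\ref{table:123}, and the sentence ``the second diagonal $(1,3,r_1,\dots)$ must itself be a codimension-three Gorenstein sequence'' suggests you are treating $r_{1,j-2}=3$ as forced. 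This bifurcation is precisely why Table~\ref{s4tab} carries \emph{two} columns of rank-matrix counts. For instance, row~\#2 ($\overrightarrow\Delta=(1,0,0,0)$) contributes $2+4=6$ matrices: two from the codimension-three second diagonal (row~\#1 of Table~\ref{s=3tab}) and four from the codimension-two one (row~\#1 of Table~\ref{table:123}). Without the recursive call to Table~\ref{s=3tab} you will miscount rows \#2, 4, 7, 11.

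A minor point: for $s=4$ the corner block to be analyzed is $3\times 3$, not $2\times 2$; the paper works this out explicitly for $\overrightarrow\Delta=(0,0,0,0)$, where five distinct $3\times 3$ corners arise after the constraints of Lemma~\ref{ranklem}(iv) force $r_{1,2}=3$ and $r_{0,2}=r_{0,3}=1$.
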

\begin{proof}
To get the possible rank matrices for $\overrightarrow\Delta = (0,0,0,0)$
we note that since the diagonals are Hilbert functions for AG algebras, we have $r_{1,j-2}=\cdots=r_{1,3}=3$, and $r_{0,4}=\cdots=r_{0,j-1}=1$. It is left to count the possibilities of the matrix\vskip 0.2cm\par $\begin{pmatrix}
    1&r_{0,3}&r_{0,2}&r_{0,1}&r_{0,0}\\
    3&3&r_{1,2}&r_{1,1}&r_{0,1}\\
    4&4&4&r_{1,2}&r_{0,2}\\
    4&4&4&3&r_{0,3}\\
    4&4&4&3&1    
\end{pmatrix}.$\vskip 0.2cm\par\noindent
By Lemma \ref{diaglem} (iv), $r_{1,2}=3$ and $r_{0,3}=r_{0,2}=1$, and we have the following 5 choices for the upper right corner:  $$\begin{pmatrix}
    1&0&0\\
    3&2&0\\
    4&3&1
\end{pmatrix},\begin{pmatrix}
    1&1&0\\
    3&2&1\\
    4&3&1
\end{pmatrix},\begin{pmatrix}
    1&1&1\\
    3&2&1\\
    4&3&1
\end{pmatrix},\begin{pmatrix}
    1&1&0\\
    3&3&1\\
    4&3&1
\end{pmatrix},\begin{pmatrix}
    1&1&1\\
    3&3&1\\
    4&3&1
\end{pmatrix}.$$
Now suppose the first entry of $\overrightarrow\Delta$ is one, or equivalently $r_1=3$, so we have either $r_{1,j-2}=2$ or $3$. If $r_{1,j-2}$ is 3, the number of possible rank matrices follows from Table \ref{s=3tab}. If $r_{1,j-2}=2$, the number of possible rank matrices follows from the case $T(1)=(1,2,3^{k-1},2,1)$, (see Table \ref{table:123}). For instance, in $\#2$ in Table \ref{s4tab}, the number of rank matrices is 6 such that 2 of them correspond to having  $T(1)=(1,3^{k+1},1)$ for which we look at $\#1$ in Table \ref{s=3tab} that counts 2 possible rank matrices. Similarly, if $T(1)=(1,2,3^{k-1},2,1)$ the 4 rank matrices are obtained from $\#1$ in Table \ref{table:123}.
These two sequences contribute to cases \#2, 4, 7 and 11 (when $r_1=3).$\par
When $r_1=2$, $r_{1,j-2}$ is also equal to $2$, and the number of possible rank matrices follows from Table \ref{table:122}.\par
When $r_{k-1}=1$ (respectively $r_{k-1}=0$), the top right $3\times 3$-submatrix of $M_{A,\ell}$ is forced to be $\begin{pmatrix}
		1&1&1\\
		1&1&1\\
		1&1&1
	\end{pmatrix}$, respectively $\begin{pmatrix}
	0&0&0\\
	0&0&0\\
	0&0&0
	\end{pmatrix}$.
\end{proof}
 \begin{table}[h]
    \begin{center} 
    \begin{tabular}{|c|c|c|c|c|}
    \hline
     \#& $\overrightarrow \Delta=(\Delta r_1,\dots,\Delta r_4) $   &  $\overrightarrow r=(r_0,r_1,\dots,r_{k-1})$&\multicolumn{2}{c|}{\#rk matrices}\\
        \hline
         1&$(0,0,0,0)$&$(4,4,4,4,4,\dots,4)$&\textcolor{white}{aa}5\textcolor{white}{aa}&0\\
         2&$(1,0,0,0)$&$(4,3,3,3,3,\dots,3)$&2&4\\
         3&$(2,0,0,0)$&$(4,2,2,2,2,\dots,2)$&2&0\\
         4&$(1,1,0,0)$&$(4,3,2,2,2,\dots,2)$&2&2\\
         5&$(3,0,0,0)$&$(4,1,1,1,1,\dots,1)$&1&0\\
         6&$(2,1,0,0)$&$(4,2,1,1,1,\dots,1)$&1&0\\
         7&$(1,1,1,0)$&$(4,3,2,1,1,\dots,1)$&1&1\\
         8&$(3,1,0,0)$&$(4,1,0,0,0,\dots,0)$&1&0\\
         9&$(2,2,0,0)$&$(4,2,0,0,0,\dots,0)$&1&0\\
         10&$(2,1,1,0)$&$(4,2,1,0,0,\dots,0)$&1&0\\
         11&$(1,1,1,1)$&$(4,3,2,1,0,\dots,0)$&1&1\\
         \hline 
    \end{tabular}
\end{center}
\caption{Case $s=4$: the sequences ${\Delta}$ and number of rank matrices. See Theorem \ref{4kthm} and Remark \ref{rankmatrixrem}.} \label{s4tab}\par  
\end{table}\normalsize
\subsubsection{Case $s=5$.}
Let $T=(1,3,5^k,3,1), k\ge 3$.  The number of possible sequences $\overrightarrow{\Delta}$ (equivalently $\overrightarrow{r}$) is equal to 
 $p(0)+p(1)+p(2)+p(3)+p(4,3)+p(5,3)=16$.
For each of the sequences $\overrightarrow \Delta$ and equivalently $\overrightarrow r$ such that $r_1$ (second entry of $\overrightarrow r$) is less than $5$ we count the number of potential rank matrices recursively using previous results for $s=3,4$ and for sequences $(r_1,r_2,\dots ,r_{k-1})$; i.e. removing $r_0$ from $\overrightarrow r$, and the appropriate $T$ for which the Sperner number occurs $k-1$ times. Thus we have the following for $s=5$ and $k\geq 4$.
\begin{theorem}\label{5kthm}
    Let $T=(1,3,5^k,3,1)$ for $k\ge 4$. 
    There are 47 potential Jordan degree types for $(A,\ell)$ where  $A\in \mathrm{Gor}(T)$ and $\ell\in A_1$. 
\end{theorem}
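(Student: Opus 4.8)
The plan is to mimic exactly the counting strategy already carried out for $s=3$ (Theorem \ref{s=3thm}) and $s=4$ (Theorem \ref{4kthm}), using the recursive structure of the rank matrix. First I would enumerate the admissible $\overrightarrow{\Delta}$ sequences: by Corollary \ref{deltacor}, $\overrightarrow{\Delta}$ is a partition of some integer $r\le s=5$, with largest part at most $s-1=4$, and, by Remark \ref{deltarmk}, in fact $\Delta r_1\le 3$. This accounts for the count $p(0)+p(1)+p(2)+p(3)+p(4,3)+p(5,3)=1+1+2+3+4+5=16$ sequences $\overrightarrow{\Delta}$; I would list all sixteen in a table analogous to Tables \ref{s=3tab} and \ref{s4tab}, together with the corresponding constant diagonal vector $\overrightarrow r=(r_0=5,r_1,\dots,r_{k-1})$. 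As in the earlier proofs, once $\overrightarrow r$ is fixed, the interior Toeplitz block of $M_{A,\ell}$ (Equation \eqref{Meq}) is determined, every diagonal parallel to the main diagonal must be a Gorenstein sequence (Lemma \ref{ranklem}(i)), and so the only remaining freedom is in the top-right $4\times 4$ corner (the entries $r_{0,0},r_{0,1},r_{0,2},r_{0,3}$ and the forced Toeplitz values around them); the rank matrix is then completely determined by $\overrightarrow r$ together with that corner.

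The heart of the argument is to count, for each $\overrightarrow r$, the admissible choices of this corner. The key reduction, already used for $s=4$, is that the second diagonal of $M_{A,\ell}$ (through $(1,j-2)$) is itself the rank matrix of a codimension three AG algebra whose Hilbert function has Sperner number $r_1\in\{1,2,3,4,5\}$: when $r_1=5$ it is the constant case $(1,3,5^{k+1},3,1)$ handled by the same theorem with $k+1$ in place of $k$ (so one recurses on $k$, with the base case $k=4$); when $r_1=4$ it is governed by the $s=4$ count of Theorem \ref{4kthm}/Table \ref{s4tab}; when $r_1=3$ by Table \ref{s=3tab} or by the codimension-two Table \ref{table:123} according to whether the value $4$ appears; when $r_1=2$ by the codimension-two Table \ref{table:122}; and when $r_1=1$ (so $r_{k-1}=1$ or $r_{k-1}=0$) the corner is forced to be the all-ones or all-zeros $4\times 4$ block exactly as in the $s=4$ proof. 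Summing the resulting contributions over the sixteen $\overrightarrow{\Delta}$ sequences — recording each in the analogue of Table \ref{s5tab} — gives the total of $47$ potential Jordan degree types. Finally, Lemma \ref{diaglem} and Corollary \ref{Mcor} guarantee that the enumerated matrices are exactly the ones compatible with the punctual-Hilbert-scheme constraint (the Toeplitz shape), so no spurious matrices are counted; and the correspondence between rank matrices and JDT (\cite[Proposition 3.12]{Al1}, recalled after Example \ref{rankex}) turns the count of rank matrices into a count of JDT.

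The main obstacle I anticipate is purely bookkeeping rather than conceptual: ensuring that the recursion on $r_1$ does not double-count, and that the local constraints from Lemma \ref{ranklem}(iii),(iv) (rows non-increasing, columns non-decreasing, and the $2\times 2$ inequality $\gamma+\beta\ge\alpha+\zeta$ of Equation \eqref{sumdiageq}) are consistently applied when deciding which corner completions are admissible — in particular in the borderline sequences where $r_{k-1}$ drops to $1$ or $0$, or where $\Delta r_1$ is near its maximum, one must check that the forced interior values are compatible with an O-sequence first difference between consecutive diagonals (Lemma \ref{ranklem}(ii)), as illustrated by Remark \ref{deltarmk}. Because $s=5$ has strictly more $\overrightarrow{\Delta}$ cases than $s=3,4$ and the corner is now $4\times 4$, the verification is longer, but each individual case is of the same elementary type already dispatched in the proofs of Theorems \ref{s=3thm} and \ref{4kthm}, and the remaining case $k=3$ (excluded here) is handled separately in Theorem \ref{5kbthm}.
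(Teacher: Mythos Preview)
Your overall strategy matches the paper's: enumerate the sixteen admissible $\overrightarrow{\Delta}$, fix the Toeplitz interior, and for each $\overrightarrow{\Delta}$ count the admissible corner completions by recursing on the second diagonal. However, several of the case-by-case details in your proposal are wrong and would derail the count.

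First, $r_1=1$ never occurs here: $\Delta r_1=4$ forces the first difference of the first two diagonals to be $(1,2,4,4,\dots)$, violating Lemma~\ref{ranklem}(ii). The paper makes this explicit, and it is the reason $p(5,3)$ rather than $p(5,4)$ appears in the $\overrightarrow{\Delta}$ count; your ``$r_1=1$'' branch should be deleted, not handled by an all-ones or all-zeros corner. Second, your treatment of $r_1=5$ is backwards: the second diagonal is $(1,3,5^{k-1},3,1)$, not $(1,3,5^{k+1},3,1)$, so a self-recursion would go down in $k$, and for the $k\ge 4$ statement you would need the separate $k=3$ base case (Theorem~\ref{5kbthm}). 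The paper avoids this entirely by handling $\overrightarrow{\Delta}=(0,\dots,0)$ directly, listing the seven admissible $3\times 3$ upper-right corners; you should do the same. Third, the corner to be filled is $3\times 3$ (entries $r_{0,0},r_{0,1},r_{0,2},r_{1,1},r_{1,2}$ and their symmetric reflections), not $4\times 4$: for $s\le 6$ the border outside the Toeplitz block is always two rows/columns wide. Finally, your ``according to whether the value $4$ appears'' for the $r_1=3$ cases is the right intuition but the wrong criterion; the paper splits on whether $r_{1,j-2}=3$ (second diagonal codimension three, use Table~\ref{s=3tab}) or $r_{1,j-2}=2$ (codimension two, use Table~\ref{table:123}), and when $r_1=3,\,r_2=1$ the $O$-sequence condition forces $r_{1,j-2}=3$. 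Once you correct these four points your enumeration coincides with Table~\ref{s5tab} and sums to $47$.
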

\begin{proof}
    The potential sequences for $\overrightarrow{\Delta}$ and $\overrightarrow{r}$
     are listed in Table \ref{s5tab}. Since by assumption  $r_0=5$ we must have $0\le \Delta r_0 = r_0-r_1<4$. If $\Delta r_0 = r_0$ we have $r_1=0$ which is equivalent to having the AG algebra $A/(0:\ell)=0$ which is impossible for $\ell\neq 0$. Also if we have $\Delta r_0 =4$ when $r_0=5$ we have $r_1=1$ which means that all the entries on the second diagonal of the rank matrix $M_{A,\ell}$, which gives the Hilbert function of $A/(0:\ell)$, are equal to one. By Lemma \ref{ranklem} part (ii), the difference between any two consecutive diagonals of the rank matrix must be an O-sequence but $(1,3,5^k,3,1)-(0,1^{k+3})=(1,2,4,4,\dots )$ clearly is not an O-sequence.
     \begin{table}
     \begin{center}\label{table:135}
    \begin{tabular}{|c|c|c|c|c|c|}
    \hline
      \#&$\overrightarrow{\Delta}=(\Delta r_1,\dots,\Delta r_5) \quad $   &  $\overrightarrow{r}=(r_0,r_1,\dots,r_{k-1})$&\multicolumn{2}{|c|}{\#rk matrices}\\
        \hline
         1&$(0,0,0,0,0)$\quad&$(5,5,5,5,5,5\dots,5)$&\textcolor{white}{aa}7\textcolor{white}{aa} &0\\
         2&$(1,0,0,0,0)$&$(5,4,4,4,4,4,\dots,4)$&5&0\\
         3&$(2,0,0,0,0)$&$(5,3,3,3,3,3,\dots,3)$&2&4\\
         4&$(1,1,0,0,0)$&$(5,4,3,3,3,3,\dots,3)$&6&0\\
         5&$(3,0,0,0,0)$&$(5,2,2,2,2,2,\dots,2)$&2&0\\
         6&$(2,1,0,0,0)$&$(5,3,2,2,2,2,\dots,2)$&2&2\\
         7&$(1,1,1,0,0)$&$(5,4,3,2,2,2,\dots,2)$&4&0\\
         8&$(3,1,0,0,0)$&$(5,2,1,1,1,1,\dots,1)$&1&0\\
         9&$(2,2,0,0,0)$&$(5,3,1,1,1,1,\dots,1)$&1&0\\
         10&$(2,1,1,0,0)$&$(5,3,2,1,1,1,\dots,1)$&1&1\\
         11&$(1,1,1,1,0)$&$(5,4,3,2,1,1,\dots,1)$&2&0\\
         12&$(3,2,0,0,0)$&$(5,2,0,0,0,0,\dots,0)$&1&0\\
         13&$(3,1,1,0,0)$&$(5,2,1,0,0,0,\dots,0)$&1&0\\
         14&$(2,2,1,0,0)$&$(5,3,1,0,0,0,\dots,0)$&1&0\\
         15&$(2,1,1,1,0)$&$(5,3,2,1,0,0,\dots,0)$&1&1\\
         16&$(1,1,1,1,1)$&$(5,4,3,2,1,0,\dots,0)$&2&0\\
         \hline 
    \end{tabular}
\end{center}
\caption{Case $s=5$: the sequences $\Delta$ and number of rank matrices. See Theorem \ref{5kthm} and Remark \ref{rankmatrixrem}.}\label{s5tab}
\end{table}
It is left to study the possible values of $r_{0,0},\dots r_{0,j-1}$ and $r_{1,1}, \dots, r_{1,j-2}$. Note that if $r_u$ is zero, then $r_{0,j-u}=r_{1,j-u-1}=0$ (the whole diagonal containing $r_u$ is zero.) \newline  
For case \#1, since the diagonals are Hilbert functions for AG algebras, then $r_{1,j-2}=\cdots=r_{1,3}=3$, and $r_{0,4}=\cdots=r_{0,j-1}=1$. It is left to count the possibilities for the matrix\par\vskip 0.2cm $\qquad\quad\begin{pmatrix}
    1&r_{0,3}&r_{0,2}&r_{0,1}&r_{0,0}\\
    3&3&r_{1,2}&r_{1,1}&r_{0,1}\\
    5&5&5&r_{1,2}&r_{0,2}\\
    5&5&5&3&r_{0,3}\\
    5&5&5&3&1    
\end{pmatrix}.$\par\vskip 0.2cm\noindent
By Lemma \ref{diaglem} (iv), $r_{1,2}=3$ and $r_{0,3}=r_{0,2}=1$. We have the following choices: 
\footnotesize $$\begin{pmatrix}
    1&0&0\\
    3&1&0\\
    5&3&1
\end{pmatrix},\begin{pmatrix}
    1&1&1\\
    3&1&1\\
    5&3&1
\end{pmatrix},\begin{pmatrix}
    1&0&0\\
    3&2&0\\
    5&3&1
\end{pmatrix},\begin{pmatrix}
    1&1&0\\
    3&2&1\\
    5&3&1
\end{pmatrix},\begin{pmatrix}
    1&1&1\\
    3&2&1\\
    5&3&1
\end{pmatrix},\begin{pmatrix}
    1&1&0\\
    3&3&1\\
    5&3&1
\end{pmatrix},\begin{pmatrix}
    1&1&1\\
    3&3&1\\
    5&3&1
\end{pmatrix}$$
\normalsize
For cases  \#2,4,7,11, and 16, (i.e. $r_1=4$, and therefore $r_{1,j-2}=3$) the second diagonal is $(1,3,4^{k-1},3,1)$. Hence, for $k\ge 4$, the possible rank matrices follow from the case $(1,3,4^k,3,1)$, (see Table \ref{s4tab}).
\newline 
For cases \#3,6,10, and 15 ($r_1=3$ and $r_2\ge 2$), $r_{1,j-2}$ is either 2 or 3. If it is equal to 3 (respectively 2), then the possible rank matrices follow from the case $(1,3^{k+1},1), k\geq 4$ (Table \ref{s=3tab}), (respectively $(1,2,3^{k-3},2,1), k\geq 4$, (Table \ref{table:123})).\newline 
Where we have $r_1=3$ and $r_2=1$, cases \#9 and \#14, we must have $r_{1,j-2}=3$. Since otherwise if $r_{1,j-2}=2$ the difference between the second and third diagonal of the rank matrix is given by $(1,2,3,3,\dots )-(1,1,1,\dots)=(1,1,2,2,\dots )$; that is not an O-sequence contradicting the condition (ii) in Lemma \ref{ranklem}. Thus the number of rank matrices in these two cases are given by the number of corresponding rank matrices for $(1,3^{k+1},1)$; i.e. cases \#3 and \#5 in Table~\ref{s=3tab}.\newline
For the remaining cases, \#5,8,12, and \#13 where $r_1=2$, we have $r_{1,j-2}=2$, the possible rank matrices follow from the case $(1,2^{k-1},1)$ (Table \ref{table:122}).\end{proof}
\begin{remark}\label{rankmatrixrem}
    The number of rank matrices for each row in Tables \ref{s4tab} and \ref{s5tab} is divided into two columns where the left (respectively, right) column indicates the number of rank matrices for which the diagonal after the main diagonal is the Hilbert function of some codimension three (respectively, codimension two) AG algebra. 
\end{remark}

Now we count the number of rank matrices when $s=5$ and $k=3$.
\begin{theorem}\label{5kbthm}
    Let $T=(1,3,5^k,3,1)$ and $k=3$ then there are 43 potential Jordan degree types for pairs $(A,\ell)$.
\end{theorem}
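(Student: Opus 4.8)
The strategy mirrors the proof of Theorem~\ref{5kthm}, but now $k=3$ so the "stable range" arguments that reduced the count for $s=5$, $k\ge 4$ to the cases $s=3$ (with $k+1$ repetitions) and $s=4$ (with $k$ repetitions) are no longer fully available; several of the sequences $\overrightarrow\Delta$ that gave distinct stable rank matrices for large $k$ now collapse or shrink. First I would reuse Table~\ref{s5tab}: the list of admissible $\overrightarrow\Delta=(\Delta r_1,\dots,\Delta r_5)$ is unchanged, since Corollary~\ref{deltacor} and Corollary~\ref{Mcor} only require $k\ge 3$, so $\overrightarrow\Delta$ still runs over the $16$ partitions of an integer $r\le 5$ with largest part $\le 3$ (the bound $\Delta r_1\le 3$ coming from Remark~\ref{deltarmk}). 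For $k=3$ we have $\overrightarrow r=(r_0,r_1,r_2)=(5,r_1,r_2)$, so a length-$3$ Toeplitz block sits in the center of $M_{A,\ell}$ of Equation~\eqref{Meq}, and only $r_1,r_2$ (plus the "blue" border entries) are free.

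The core of the argument is then to go row by row through Table~\ref{s5tab} and recount the rank matrices \emph{for $k=3$}, comparing against the $k\ge 4$ count. For each $\overrightarrow\Delta$ the rank matrix is determined by $\overrightarrow r$ together with the top-right corner entries $r_{0,0},r_{0,1},r_{1,1}$ and the remaining blue border entries; but the blue diagonals are themselves Gorenstein Hilbert functions (Lemma~\ref{ranklem}(i)) and each adjacent $2\times 2$ square obeys Equation~\eqref{sumdiageq}, so as in the proofs of Theorems~\ref{s=3thm}, \ref{4kthm}, \ref{5kthm} the freedom reduces to a small explicit $3\times 3$ (or $2\times 2$) corner submatrix. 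When $r_1=4$ the second diagonal is $(1,3,4^{k-1},3,1)=(1,3,4^2,3,1)$, i.e.\ $T=(1,3,4^k,3,1)$ with $k=2$; I would need the count of rank matrices for $s=4$, $k=2$ rather than $k\ge 3$ --- this is the place where the recursion changes, since Table~\ref{s4tab} was stated for $k\ge 3$, and for $k=2$ some of the $11$ sequences $\overrightarrow\Delta$ of length $4$ are incompatible (the constant diagonal is too short) or produce coinciding matrices. Likewise when $r_1=3$ the relevant reduced Hilbert functions are $(1,3^{k+1},1)=(1,3^4,1)$ (so $s=3$, $k=4\ge 3$, Table~\ref{s=3tab} applies unchanged) or $(1,2,3^{k-3},2,1)$, which for $k=3$ degenerates to $(1,2,2,1)$ --- a codimension-two Gorenstein (complete intersection) sequence of Sperner number $2$, not $3$ --- so for cases \#3, \#6, \#10, \#15 the "codimension two" branch must be recomputed using Table~\ref{table:122} instead of Table~\ref{table:123}. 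The cases with $r_1=2$ (\#5, \#8, \#12, \#13) still reduce to $(1,2^{k-1},1)=(1,2^2,1)$, i.e.\ $s=2$ with $k=2$, via Table~\ref{table:122}, and the cases with $r_{k-1}=r_2\in\{0,1\}$ (\#8,9,12,13,14,15,16 and the degenerate ends) again force the top-right corner to be all-ones or all-zeros.

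Summing the row-by-row counts and subtracting the contributions that disappear for $k=3$ should yield $43$ rather than $47$; concretely I would expect exactly four rank matrices present in the $k\ge 4$ tally to fail to occur (or to coincide with others) when $k=3$, located among the rows whose reduced Hilbert function was $(1,2,3^{k-3},2,1)$ or $(1,3,4^k,3,1)$ with the constant run now too short, and I would display the revised counts in a table analogous to Table~\ref{s5tab}. Finally, as in Theorem~\ref{5kthm}, the passage from "rank matrices'' to "Jordan degree types'' is the bijection $M_{A,\ell}\leftrightarrow J_{A,\ell}\leftrightarrow\mathcal S_{A,\ell}$ of Definition~\ref{def:jordan matrix} and \cite[Proposition 3.12]{Al1}, so the two counts agree.

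\textbf{Main obstacle.} The delicate point is the bookkeeping for the short center block: with only $r_1,r_2$ free, the inequalities of Lemma~\ref{ranklem}(ii)--(iv) interact with the small blue corner in a way that is sharper than in the large-$k$ case, and it is easy to either double-count a matrix that now has extra symmetry or to retain one that violates the O-sequence condition on consecutive diagonals (cf.\ Remark~\ref{deltarmk}). Carefully identifying precisely which four matrices drop out --- equivalently, why the count decreases by exactly $4$ --- is the crux; I would verify it by writing out all $16$ corner analyses explicitly for $k=3$ and cross-checking against the tables of Appendix~\ref{tablesec}.
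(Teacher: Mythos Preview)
Your plan is a legitimate alternative to the paper's argument, but it takes a genuinely different route and contains one computational slip that would break the count.

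\textbf{How the paper argues.} Rather than recursing to smaller Sperner number with \emph{unstable} multiplicity (your $s=4$, $k'=2$ sub-problem), the paper reuses Table~\ref{s5tab} for $s=5$, $k\ge 4$ directly. For $k=3$ the vector $\overrightarrow r$ has only three entries $(5,r_1,r_2)$, so several rows of Table~\ref{s5tab} share the same truncation. The key observation is that, within each such group, the further blue-border diagonals $(r_{0,3},r_{1,2},\ldots)$ that distinguish the rows for large $k$ are exactly the remaining freedom in the $k=3$ rank matrix. Hence for $(5,2,1)$ one simply sums rows \#8 and \#13; for $(5,3,1)$ rows \#9 and \#14; for $(5,3,2)$ rows \#6, \#10, \#15; for $(5,4,3)$ rows \#4, \#7, \#11, \#16. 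All groups add cleanly except the last: row \#4 overcounts by $4$, because when $(r_{0,3},r_{1,2},r_{1,2},r_{0,3})=(1,3,3,1)$ and $(r_{0,2},r_{1,1},r_{0,2})=(1,3,1)$ only the two choices $r_{0,0}\in\{0,1\}$ remain, not the six matrices listed for large $k$. Thus $(5,4,3)$ contributes $6+4+2+2-4=10$, and the grand total is $43$. This avoids any need to know rank-matrix counts for $s=4$, $k=2$, which are not established in Section~\ref{cod3sec} (Theorem~\ref{4kthm} requires $k\ge 3$, and Theorem~\ref{s=4thm} for $k=2$ counts \emph{actual} JDT, not potential rank matrices).

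\textbf{Your slip.} When $r_1=3$ and $r_{1,j-2}=2$, the second diagonal is $(1,2,3^{k-1},2,1)$, not $(1,2,3^{k-3},2,1)$: count the $k+3$ entries $(r_{0,j-1},r_{1,j-2},r_1^{k-1},r_{1,j-2},r_{0,j-1})$. For $k=3$ this is $(1,2,3^{2},2,1)$, a codimension-two CI sequence of Sperner number $3$ with multiplicity $2$, so Table~\ref{table:123} --- not Table~\ref{table:122} --- is still the relevant input. (The exponent $k-3$ also appears in the paper's proof of Theorem~\ref{5kthm} and looks like a typo there; do not propagate it.) With the incorrect formula your row-by-row tally would not reach $43$.
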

\begin{proof}
When $k=3$ the vector $\overrightarrow{r}$ has 3 non-zero entries. The number of rank matrices for $\overrightarrow{r}=(5,2,1)$ is equal to the sum of the number of rank matrices in rows \#8 and \#13 in Table \ref{s5tab}. More precisely, the rank matrices for $\overrightarrow{r}=(5,2,1)$ such that $(r_{0,3},r_{1,2},r_{1,2},r_{0,3})$ is the one or zero vector corresponding to \#8 and \#13 respectively.\par
\noindent Similarly, the number of rank matrices for $\overrightarrow{r}=(5,3,1)$ is equal to the number of rank matrices in the rows \#9 and \#14 where we have $(r_{0,3},r_{1,2},r_{1,2},r_{0,3})$ equal to one and zero vector respectively.\par
\noindent The number of rank matrices for $\overrightarrow{r}=(5,3,2)$ is given by the sum of rank matrices counted in rows \#6, \#10, and \#15. The number of rank matrices in \#6 corresponds to the case where $(r_{0,3},r_{1,2},r_{1,2},r_{0,3})=(1,2,2,1)$ and $(r_{0,2},r_{1,1},r_{0,2})=(1,2,1)$. Row \#10 counts the number of rank matrices where $(r_{0,3},r_{1,2},r_{1,2},r_{0,3})=(1,1,1,1)$ and $(r_{0,2},r_{1,1},r_{0,2})=(1,1,1)$ and row \#15 counts the rank matrices where $(r_{0,3},r_{1,2},r_{1,2},r_{0,3})=(1,1,1,1)$ and $(r_{0,2},r_{1,1},r_{0,2})=(0,0,0)$.\par
\noindent Finally, rows \#4,\#7,\#11, and \#16 correspond to $\overrightarrow{r}=(5,4,3)$. We notice that there are only 2 rank matrices where $(r_{0,3},r_{1,2},r_{1,2},r_{0,3})=(1,3,3,1)$ and $(r_{0,2},r_{1,1},r_{0,2})=(1,3,1)$ which is either $r_{0,0}=1$ or $r_{0,0}=0$. Therefore, the number of rank matrices where  $\overrightarrow{r}=(5,4,3)$ is 4 less than the sum of the rank matrices  equal to the sum of the number of rank matrices in \#4,\#7,\#11, and \#16 that is equal to 10.  We conclude that when $k=3$ there are 43 potential rank matrices.
\end{proof}

\subsubsection{Case $s=6$.}
Let $T=(1,3,6^k,3,1)$. The partition $\Delta=(3,3,0,0,\dots)$ that corresponds to $r=(6,3,0,0,\dots)$ cannot occur in this case since otherwise, if $T(1)=(1,2,3^{k-1},2,1)$ the difference between the first two diagonals would not be an $O$-sequence, and if $T(1)=(1,3^{k+1},1)$, then the submatrix $$\begin{pmatrix}
	r_{0,j-1}&r_{0,j-2}\\
	s&r_1
\end{pmatrix}=\begin{pmatrix}
1&0\\
3&3
\end{pmatrix}$$ would contradict Lemma \ref{ranklem}.  The number of possible sequences for $\overrightarrow{\Delta}$ is therefore  \small $$p(0)+p(1)+p(2)+p(3)+p(4,3)+p(5,3)+(p(6,3)-1)=1+1+2+3+4+5+(7-1)=22.$$
Similar to previous cases we count potential JDT or rank matrices for $s=6$ and $k\geq 5$ recursively using previous results that count the rank matrices for $s=3,4,5$ and $k\geq 4$.
\normalsize
\begin{theorem}\label{s=6thm}
    Let $T=(1,3,6^k,3,1)$, $k\ge 5$. There are 65 potential Jordan degree types for $(A,\ell)$ where  $A\in Gor(T)$ and $\ell\in A_1$. 
\end{theorem}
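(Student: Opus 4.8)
\emph{Proof proposal.} The plan is to run the recursive count used for Theorems~\ref{4kthm} and~\ref{5kthm}, now fed by the $s=3,4,5$ tables. First I would record, as in the paragraph preceding the statement, that the admissible $\overrightarrow{\Delta}$ are exactly the partitions of an integer $r\le 6$ into parts $\le 3$ (Corollary~\ref{deltacor}, Remark~\ref{deltarmk}) with the single exception of $(3,3,0,\ldots)$, so there are $p(0)+p(1)+p(2)+p(3)+p(4,3)+p(5,3)+(p(6,3)-1)=22$ of them; these I would list, together with the corresponding $\overrightarrow r=(6,r_1,\ldots,r_{k-1})$, in a Table~\ref{s6tab} patterned on Tables~\ref{s4tab} and~\ref{s5tab}, and then count the rank matrices \eqref{Meq} for each, organized by the value $\Delta r_1=6-r_1\in\{0,1,2,3\}$.

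For $r_1\in\{3,4,5\}$ the diagonal of $M_{A,\ell}$ just below the main one is the Hilbert function of $R/(I:\ell)$, an AG algebra of socle degree $k+2$ whose Sperner value $r_1$ occurs $k-1\ge 4$ times; by Lemma~\ref{diaglem} this is one of the almost constant sequences already treated for $s=r_1$, and forgetting the main diagonal of $M_{A,\ell}$ yields precisely the rank matrix of $R/(I:\ell)$. So the rows with $r_1=5$ contribute the entries of Table~\ref{s5tab} whose $\overrightarrow{\Delta}$ has all parts $\le1$ (the later parts being forced $\le1$ by monotonicity), the rows with $r_1=4$ the entries of Table~\ref{s4tab} with all parts $\le2$, and the rows with $r_1=3$ the entries of Table~\ref{s=3tab}. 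The new point, absent for $s\le5$, is that for $s=6$ the second diagonal of $M_{A,\ell}$ is \emph{always} a codimension three Hilbert function: for $r_1\ge4$ this is because $(1,2,r_1,\ldots)$ is not even a Gorenstein sequence, while for $r_1=3$ the codimension two alternative $(1,2,3^{\,k-1},2,1)$ is ruled out because the difference of the first two diagonals would then begin $(1,2,4,\ldots)$, violating Macaulay's bound and hence Lemma~\ref{ranklem}(ii). Consequently the $r_1=3$ rows contribute only their codimension three count from Table~\ref{s=3tab} (namely $8$) and receive no codimension two addition of the kind that occurred in the $s=5$ analysis. Finally, for the rows with $r_{k-1}\in\{0,1\}$ the monotonicity of Lemma~\ref{ranklem}(iii) together with the symmetry of $M_{A,\ell}$ forces the top-right $3\times3$ block to be the all-ones, respectively all-zeros, matrix, and this is already recorded inside the recursive counts.

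The only genuinely new computation is $\overrightarrow r=(6,6,\ldots,6)$, with $\overrightarrow{\Delta}$ trivial. Here the outer diagonals are pinned ($r_{1,j-2}=\cdots=r_{1,3}=3$, $r_{0,4}=\cdots=r_{0,j-1}=1$), and by the Toeplitz shape of Corollary~\ref{Mcor} and Lemma~\ref{ranklem}(iv) one gets $r_{1,2}=3$ and $r_{0,3}=r_{0,2}=1$; it remains to enumerate the admissible top-right corners
$$\begin{pmatrix}1&r_{0,1}&r_{0,0}\\3&r_{1,1}&r_{0,1}\\6&3&1\end{pmatrix}$$
subject to Lemma~\ref{ranklem} (rows non-increasing, columns non-decreasing, the $2\times2$ inequality \eqref{sumdiageq}, and the implied diagonals being Gorenstein sequences with $O$-sequence consecutive differences). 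This is a finite check, and I expect it to produce $7$ corners, exactly as for $\overrightarrow r=(5,5,\ldots)$ when $s=5$. Adding up — $7$ from the all-$6$'s row, $26$ imported from Table~\ref{s5tab} for $r_1=5$, $24$ from Table~\ref{s4tab} for $r_1=4$, and $8$ from Table~\ref{s=3tab} for $r_1=3$ — gives $7+26+24+8=65$, which I would present as a table in the style of Table~\ref{s5tab}. I expect the difficulty to be bookkeeping rather than conceptual: the corner enumeration for $\overrightarrow r=(6,\ldots,6)$ must be carried out carefully (checking every instance of \eqref{sumdiageq} and the $O$-sequence conditions), and — the one genuinely new wrinkle relative to $s\le5$ — one must keep track at each level of the recursion of which diagonals may drop to a codimension two Hilbert function and which may not, the obstruction being precisely that a drop of $3$ in the top diagonal forbids the codimension two option one diagonal below. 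A secondary point, already present for $s=5$ (compare Theorems~\ref{5kthm} and~\ref{5kbthm}), is that for $k=5,6$ several rows of Table~\ref{s6tab} collapse onto a single $\overrightarrow r$ because $\overrightarrow r$ is too short to resolve the later parts of $\overrightarrow{\Delta}$; one checks this merely redistributes the rank matrices without changing the total, so that $65$ holds for all $k\ge5$, with the cases $k=3,4$ postponed to Theorem~\ref{6bthm}.
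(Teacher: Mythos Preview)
Your proposal is correct and follows essentially the same approach as the paper's proof: both argue that for $s=6$ the second diagonal of $M_{A,\ell}$ is \emph{always} a codimension three Gorenstein sequence (the paper gives a single unified argument via Lemma~\ref{ranklem}(ii), you split it into the cases $r_1\ge4$ and $r_1=3$, but the content is identical), and then both recurse to the $s=3,4,5$ tables and enumerate the seven corners for $\overrightarrow r=(6,\ldots,6)$ directly. Your organization by the value of $r_1$ with subtotals $7+26+24+8=65$ is a cleaner summary than the paper's row-by-row Table~\ref{s6tab}, but it is the same computation; the paper explicitly lists the seven corner matrices where you only state the expectation, so that step would still need to be written out.
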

\begin{proof}

\par\noindent
   The possible sequences for $\overrightarrow{\Delta}$ are given in Table \ref{s6tab}. We first note that we count the rank matrices for AG algebras $A$ and linear forms $\ell\neq 0 $ so we have $r_1\neq 0$ and $\Delta r_0 <r_0=6 $. Therefore,  $r_{0,j-1}=1$. We claim that the second diagonal, $(1,r_{1,j-2}, r_1,\dots , r_1, r_{1,j-2},1 )$ is the Hilbert function of some codimension three AG algebra. Since otherwise the difference between the main and second diagonal is $(1,2,h,\dots )$ where $h = r_0 - r_{1,j-2}=4,5 $ meaning that this  difference is not an O-sequence contradicting Lemma \ref{ranklem} (ii).

It is left to study the possible values of $r_{0,0},\dots ,r_{0,j-1}$ and $r_{1,1}, \dots, r_{1,j-2}$. Note that if $r_u$ is zero, then $r_{0,j-u}=r_{1,j-u-1}=0$ (the whole diagonal containing $r_u$ is zero). \newline  
For case \#1, since the diagonals are Hilbert functions for AG algebras, then $r_{1,j-2}=\cdots=r_{1,3}=3$, and $r_{0,4}=\cdots=r_{0,j-1}=1$. By Lemma \ref{diaglem} (iv), $r_{1,2}=3$ and $r_{0,3}=r_{0,2}=1$. It is left to count the possibilities of the matrix\par\vskip 0.2cm $\begin{pmatrix}
    1&1&r_{0,2}&r_{0,1}&r_{0,0}\\
    3&3&3&r_{1,1}&r_{0,1}\\
    6&6&6&3&r_{0,2}\\
    6&6&6&3&1\\
    6&6&6&3&1    
\end{pmatrix}$\par\vskip 0.2cm\noindent
and we have the following choices for the upper right corner:  $$\begin{pmatrix}
    1&0&0\\
    3&1&0\\
    6&3&1
\end{pmatrix},\begin{pmatrix}
    1&1&1\\
    3&1&1\\
    6&3&1
\end{pmatrix},\begin{pmatrix}
    1&0&0\\
    3&2&0\\
    6&3&1
\end{pmatrix},$$ $$\begin{pmatrix}
    1&1&0\\
    3&2&1\\
    6&3&1
\end{pmatrix},\begin{pmatrix}
    1&1&1\\
    3&2&1\\
    6&3&1
\end{pmatrix},\begin{pmatrix}
    1&1&0\\
    3&3&1\\
    6&3&1
\end{pmatrix},\begin{pmatrix}
    1&1&1\\
    3&3&1\\
    6&3&1
\end{pmatrix}$$
For cases  \#2,4,7,12,19 and \#29, (i.e. $r_1=5$, $r_2\ge 4$ the second diagonal is $(1,3,5^{k-1},3,1)$. Hence, for $k\ge 5$, the possible rank matrices follow from the case $(1,3,5^k,3,1)$.\newline 
If $r_1=4$, the second diagonal is $(1,3,4^{k-1},3,1)$. Therefore, the possible rank matrices follow from the table of $(1,3,4^k,3,1)$. These are cases \#3,6,10,11, 17,18, 26,27 and 28 (see Table \ref{s4tab})

For cases \#5,9,15,16,24 and 25, $r_1=3$, then the possible rank matrices follow from the case $(1,3^{k+1},1)$ (Table \ref{s=3tab}).\newline 
  \begin{table}
     \begin{center} 
    \begin{tabular}{|c|c|c|c|}
    \hline
      \#&$\overrightarrow{\Delta}=(\Delta r_1,\dots,\Delta r_6)  $   &  $\overrightarrow{r}=(r_0,r_1,\dots,r_{k-1})$&\# rk matrices\\
        \hline
         1&$(0,0,0,0,0,0)$&$(6,6,6,6,6,6,6,\dots,6)$&7\\
         2&$(1,0,0,0,0,0)$&$(6,5,5,5,5,5,5,\dots,5)$&7\\
         3&$(2,0,0,0,0,0)$&$(6,4,4,4,4,4,4,\dots,4)$&5\\
         4&$(1,1,0,0,0,0)$&$(6,5,4,4,4,4,4,\dots,4)$&5\\
         5&$(3,0,0,0,0,0)$&$(6,3,3,3,3,3,3,\dots,3)$&2\\
         6&$(2,1,0,0,0,0)$&$(6,4,3,3,3,3,3,\dots,3)$&6\\
         7&$(1,1,1,0,0,0)$&$(6,5,4,3,3,3,3,\dots,3)$&6\\
         8&$(3,1,0,0,0,0)$&$(6,3,2,2,2,2,2,\dots,2)$&2\\
         9&$(2,2,0,0,0,0)$&$(6,4,2,2,2,2,2,\dots,2)$&2\\
         10&$(2,1,1,0,0,0)$&$(6,4,3,2,2,2,2,\dots,2)$&4\\
         11&$(1,1,1,1,0,0)$&$(6,5,4,3,2,2,2,\dots,2)$&4\\
         12&$(3,2,0,0,0,0)$&$(6,3,1,1,1,1,1,\dots,1)$&1\\
         13&$(3,1,1,0,0,0)$&$(6,3,2,1,1,1,1,\dots,1)$&1\\
         14&$(2,2,1,0,0,0)$&$(6,4,2,1,1,1,1,\dots,1)$&1\\
         15&$(2,1,1,1,0,0)$&$(6,4,3,2,1,1,1,\dots,1)$&2\\
         16&$(1,1,1,1,1,0)$&$(6,5,4,3,2,1,1,\dots,1)$&2\\
         17&$(3,2,1,0,0,0)$&$(6,3,1,0,0,0,0,\dots,0)$&1\\
         18&$(3,1,1,1,0,0)$&$(6,3,2,1,0,0,0,\dots,0)$&1\\
         19&$(2,2,2,0,0,0)$&$(6,4,2,0,0,0,0,\dots,0)$&1\\
         20&$(2,2,1,1,0,0)$&$(6,4,2,1,0,0,0,\dots,0)$&1\\
         21&$(2,1,1,1,1,0)$&$(6,4,3,2,1,0,0,\dots,0)$&2\\
         22&$(1,1,1,1,1,1)$&$(6,5,4,3,2,1,0,\dots,0)$&2\\
         \hline 
    \end{tabular}
\end{center}
\caption{Case $s=6$: the sequences $\Delta$ and number of rank matrices.}\label{s6tab}

\end{table}\vskip 0.3cm
\end{proof}

\begin{theorem}\label{6bthm}
    Let $T=(1,3,6^k,3,1)$. For $k=3$ there are 52 and for $k=4$ there are 61 potential Jordan degree types for pairs $(A,\ell)$.
\end{theorem}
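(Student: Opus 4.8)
The plan is to repeat, for the small values $k=3$ and $k=4$, the recursive collapse argument used to prove Theorem~\ref{s=6thm}, paying attention to the fact that when $k$ is small the vector $\overrightarrow r=(r_0,r_1,\dots,r_{k-1})$ records only the first $k$ entries of the pattern in Table~\ref{s6tab}. Two of the rows of Table~\ref{s6tab} then determine the same $\overrightarrow r$ precisely when their $\Delta$-patterns agree in positions $\Delta r_1,\dots,\Delta r_{k-1}$, i.e. when the listed $\overrightarrow r$'s have the same length-$k$ prefix (recall $\overrightarrow\Delta$ is a partition of an integer $\le s$ by Corollary~\ref{deltacor}). So the first step is to sort the rows of Table~\ref{s6tab} by prefix. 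For $k=4$ the prefixes are: the eleven singletons $(6,6,6,6)$, $(6,5,5,5)$, $(6,4,4,4)$, $(6,5,4,4)$, $(6,3,3,3)$, $(6,4,3,3)$, $(6,3,2,2)$, $(6,4,2,2)$, $(6,3,1,1)$, $(6,3,1,0)$, $(6,4,2,0)$ (rows \#1--6, 8, 9, 12, 17, 19), which keep their table counts, and four collapsing ``staircase'' prefixes $(6,5,4,3)$ (rows \#7,\#11,\#16,\#22), $(6,4,3,2)$ (rows \#10,\#15,\#21), $(6,3,2,1)$ (rows \#13,\#18) and $(6,4,2,1)$ (rows \#14,\#20); for $k=3$ there is a further collapse, leaving nine prefixes.

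For each singleton prefix one must re-check that the corner count in Table~\ref{s6tab} is unchanged: when $k$ is this small the top-right $3\times 3$ corner block, the central Toeplitz block, and the mirrored bottom half of $M_{A,\ell}$ from \eqref{Meq} overlap, so the allowable configurations of the corner entries $r_{0,0},\dots,r_{0,j-1}$ and $r_{1,1},\dots,r_{1,j-2}$ have to be re-enumerated subject to Lemma~\ref{ranklem}(i)--(iv) and the Toeplitz constraint of Lemma~\ref{diaglem}; one verifies the count stays put (this is immediate when $r_{k-1}\le 1$, since then the corner is forced to be all ones or all zeros). For each collapsing prefix one takes the union of the corner configurations contributed by the pooled rows and removes the duplicates — in each such group the configurations with extremal corner (those shared by the largest sub-patterns) are over-counted, exactly as in the ``$4$ less'' correction in the proof of Theorem~\ref{5kbthm}. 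Adding the singleton counts and the deduplicated collapsing counts yields $61$ rank matrices for $k=4$ and $52$ for $k=3$.

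Throughout, the reductions along the second diagonal proceed as in Theorem~\ref{s=6thm}: the diagonal above the main one is the Gorenstein sequence $(1,3,r_1^{k-1},3,1)$, so its contribution is governed by the count for $T=(1,3,6^{k-1},3,1)$ when $r_1=6$, by $(1,3,5^{k-1},3,1)$ when $r_1=5$, by $(1,3,4^{k-1},3,1)$ when $r_1=4$, by $(1,3^{k+1},1)$ when $r_1=3$ (Theorem~\ref{s=3thm}), and by the codimension-two count of $(1,2^{k-1},1)$ when $r_1=2$ (Table~\ref{table:122}). One therefore establishes the $k=3$ statement first; for $k=4$ the needed codimension-three inputs are then available, since $(1,3,5^3,3,1)$ is Theorem~\ref{5kbthm}, $(1,3,4^3,3,1)$ is Theorem~\ref{4kthm}, and $(1,3,6^3,3,1)$ is the $k=3$ case just proved — though one must use the per-prefix breakdown inside the proof of Theorem~\ref{5kbthm}, not only its total. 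For $k=3$ the recursion bottoms out at the multiplicity-two cases $(1,3,6^2,3,1)$, $(1,3,5^2,3,1)$, $(1,3,4^2,3,1)$, whose (few) rank matrices are counted directly by the same small enumeration, ultimately reducing to the codimension-two Tables~\ref{table:122} and~\ref{table:123}.

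I expect the main obstacle to be the deduplication for the collapsing staircase prefixes: one must pin down, for each such prefix, exactly which corner configurations occur in two or more of the pooled rows and subtract each of them once, and simultaneously confirm that no configuration satisfying the local $2\times 2$ inequality~\eqref{sumdiageq} is actually excluded by the global requirement that every diagonal of $M_{A,\ell}$ be a genuine codimension-three Gorenstein sequence (Lemma~\ref{ranklem}(i)). This is sharpest for $k=3$, where $M_{A,\ell}$ is only $7\times 7$ and essentially all of its entries are coupled, so that the bookkeeping must be done by hand matrix by matrix rather than purely combinatorially.
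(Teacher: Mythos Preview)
Your proposal is essentially the paper's own argument: group the rows of Table~\ref{s6tab} by the length-$k$ prefix of $\overrightarrow r$, keep the singleton counts, and recompute the collapsed groups via the second-diagonal recursion to the smaller-$s$ tables (with the per-prefix breakdown of Theorem~\ref{5kbthm} for the $(6,5,4,3)$ group). Two small points of divergence are worth flagging. First, the paper does not actually recurse down to the multiplicity-two cases $(1,3,s^2,3,1)$ when handling $k=3$; instead it argues directly, for each collapsed prefix, which values of the short diagonals $(r_{0,3},r_{1,2},r_{1,2},r_{0,3})$ and $(r_{0,2},r_{1,1},r_{0,2})$ are admissible and counts the surviving corners by hand. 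Second, your description of the correction as ``removing duplicates'' is not quite the mechanism: for large $k$ the configurations attached to distinct rows are genuinely disjoint, and the drop in the count at small $k$ comes from some of them becoming \emph{infeasible} once the corner block overlaps the Toeplitz block (e.g.\ row~\#4 at $k=3$ would force $r_{1,2}=4>3$, so it contributes nothing; row~\#7 drops from $6$ to $2$ for the analogous reason). This is exactly the ``$4$ less'' phenomenon you cite from Theorem~\ref{5kbthm}, but framed as exclusion rather than overlap. Either way the arithmetic is the same and your outline would go through.
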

\begin{proof}
Let $k=3$ and note that $\overrightarrow{r}$ has three non-zero entries. Similar to the proof of Theorem \ref{5kbthm} we consider the rows of Table \ref{s6tab} having the same $(r_0,r_1,r_2)$.\par Suppose $\overrightarrow{r}=(6,3,1)$ that is given by first three entries of $\overrightarrow{r}$ in \#12 and \#17 in Table \ref{s6tab}. So the number of rank matrices in this case is equal to the sum of the rank matrices given in \#12 and \#17 that is equal to two. \newline 
The first three entries of \#8, 13 and 18 are equal to  $\overrightarrow{r}=(6,3,2)$. So the number of rank matrices in this case is equal to the sum of rank matrices of the corresponding rows that is equal to 4.\newline
Similarly, for $\overrightarrow{r}=(6,4,2)$ there are 5 rank matrices that is the sum of the rank matrices in \#9, 14, 19, and 20. \newline
The case $\overrightarrow{r}=(6,4,3)$ corresponds to \#6, 10, 15, and 21. Notice that the rank matrices for $k=3$ and row \#6 correspond to the cases where we have $(r_{0,3},r_{1,2},r_{1,2},r_{0,3})=(1,3,3,1)$ and $(r_{0,2},r_{1,1},r_{0,2})=(1,3,1)$ and there are only two possible rank matrices in this case. Thus the total number of rank matrices for $\overrightarrow{r}=(6,4,3)$ is equal to 10. \newline
Finally, $\overrightarrow{r}=(6,5,4)$ is given in rows \#4, 7, 11, 16, and 22. Rank matrices corresponding to \#4 cannot occur for $k=3$, since they correspond to having $(r_{0,3},r_{1,2},r_{1,2},r_{0,3})=(1,4,4,1)$ that is not possible. And similar to the previous case there are only 2 rank matrices corresponding to \#6 for $k=3$. Thus the numbe of possible rank matrices for $\overrightarrow{r}=(6,4,3)$ is equal to 10. \newline
We conclude that for $k=3$ there are 52 potential rank matrices or equivalently Jordan degree types for pairs $(A,\ell)$.\par 
Now we show there are 61 potential rank matrices when $k=4$ using previous theorems for $s=3,4,5$ and $k=3$. We consider the first four entries of $\overrightarrow r$ in Table \ref{s6tab}. Thus rows \#14 and 20 correspond to $(r_0,r_1,r_2,r_3)=(6,4,2,1)$ and using the count for $s=4$ and $k=3$ we get that the number of rank matrices in this case is equal to the sum of rank matrices in in rows \#6 and 10 in Table \ref{s4tab} that is equal to $1+1=2$. Rows \#10, 2 and 21 correspond to $(6,4,3,2)$ and the count it equal to the sum of rank matrices of rows \#4, 7, and 11 in Table \ref{s4tab} which correspond to $(4,3,2)$ and is equal to $4+2+2=8$. Finally, the first four entries of $\overrightarrow r$ of rows \#7, 11, 16, and 22 is $(6,5,4,3)$. The number of rank matrices in this case is equal to the rank matrices for $s=5, k=3$ and having the $\overrightarrow r=(5,4,3)$. Thus using the last part of the proof of Theorem \ref{5kbthm} we get that in this case there are $10$ possible rank matrices (that is equal to the sum of number of rank matrices in \#7, 11, 16, and 22 minus $4$). We conclude that the number of rank matrices for $s=6$ and $k=4$ is $4$ less than the number of rank matrices for $s=6$ and $k\geq 5$ that is equal to $61$.
\end{proof}

\section{Actual JDT for algebras $A\in \Gor(T)$.}
We first classify three kinds of parts for JDT occurring for algebras in $\Gor(T)$, $T=(1,3,T_1,s^k,T_1^r,3,1)$ as $k$ increases (Section \ref{threesec}).  We then state the second part of our main results, that the JDT occurring for almost constant $T$ with $s\le 5$ are completely determined, and are exhibited in the appended Tables (Section \ref{mainsec2}). We then illustrate the methods we used to verify the tables (Section \ref{exsec}), and set out some questions and problems (Section \ref{questsec}).
\subsection{Three kinds of growth of JDT parts.}\label{threesec}
Recall that the JDT for any $A\in \Gor(T)$ for any $T$ is symmetric (Lemma~\ref{JDTsymlem}).
As the multiplicity $k$ of the Sperner number increases for the almost constant $T$ of Equation~\eqref{Taceq}, there are three kinds of growth of the parts that may occur for the  potential JDT for $A\in \Gor(T)$.  Here, we are considering $k\ge 3$ (or $k\ge 1$ if $s=3$), and related Gorenstein algebras, where $A(j)=R/\Ann F(j)$ and $F(j-1)=\ell\circ F(j)$ for a fixed linear form $\ell\in R_1$. By Lemma \ref{maphilblem} there is a length-$s$ punctual scheme $\Z\subset \mathbb P^2$ defined by the ideal $I_\Z$, so $F(j)\in (I_Z)_j^\perp$, and we choose $F(j)$ general enough so $H(R/\Ann F(j))=\Sym(j, H(R/I_\Z))$ (Equation \eqref{SymHeq}). Here $\tau (\Z)=$ the lowest degree $i$ such that $H(R/I_\Z)_i=s$. For $T$ almost constant, and $s\in\{4,5,6\}$ we assume $j\ge 6$.
\begin{definition}\label{partsdef} Let $T=(1,3,T_1,s^k,T_1^r,3,1)$ be a codimension three Gorenstein sequence (Equation \eqref{Gorseqeqn}), and let $j$ be the socle degree of $T$. We fix a punctual scheme $\Z\subset \mathbb P^2$ of Hilbert function
$H=(1,3, T_1,\overline{s})$, and consider $A(j)=R/\Ann\, F(j)$ with $F(j)$ general enough in $(I_\Z)_j^\perp$.  Recall that $H(A(j))=\Sym(H,j)$. We define three kinds of parts of the Jordan degree type $\mathcal S(R/\Ann F(j))$, as $j$ varies for such a related set $A(j)=R/\Ann F(j)$, where $F(j-i)=\ell^i\circ F(j)$ for $j-i\ge 2\tau(\Z)+2$.
\begin{enumerate}
\item  {\it lengthening} parts of the form $(j-a)_i$ where $i$ and $a$ are fixed as $j$ increases;
\item  {\it repeated} parts, of the form $(w\uparrow_i^{j+1-i-w})$, where $w$ and $ i$ are fixed as $j$ increases, and $w\le s$;
\item {\it sporadic} parts of the form  $(w_{i, j+1-i})$ where $w$ and $i$ are fixed as $j$ increases.
\end{enumerate}
\end{definition}
\begin{example} When $T=(1,3^k,1)$ the Table \ref{s=3tab} entry \#4 is $\overrightarrow{\Delta}=(1,1,0)$ and $\mathcal S=((j+1)_0, 2\uparrow_1^{j-2}, 1_{1,j-1})$ which has the lengthening part $(j+1)_0$, the repeated parts $(2\uparrow_1^{j-2})$ and the sporadic parts $(1_{1,j-1})$.
\end{example} 
Given $T$ from Equation \eqref{Taceq} and a rank matrix $M_{A,\ell}$ for $A\in \Gor(T)$, recall from Equation \ref{deltaeq} the sequence $\overrightarrow{\Delta}=(\Delta r_1,\ldots,\Delta r_{k-1})$, the first differences $\Delta r_i=r_{i-1}-r_i$ of the sequence $\overrightarrow{r}=(r_0,r_1,\ldots,r_{k-1})$ where $r_0=s$. We consider a set of related JDT $\mathcal S(j)=\mathcal S_{F(j),x}$ where $F(j)\in S_j$ and $F(j-i)=x^i\circ F(j)$, of Hilbert function $H(F(u))=(1,3,s^{u-3},3,1)$. We denote by $W_{\mathcal S(j)}$ the multi-set (with multiplicities)
\begin{equation}\label{Weqn} W_{\mathcal S(j)}=\{w \mid \exists (w\uparrow_i^{j+1-i-w}) \text { in the JDT $\mathcal S(j)$}\}.
\end{equation}
Recall that the conjugate partition $P^\vee$ to $P$ is obtained by switching rows and columns in the Ferrers diagram of $P$. We denote the sum of entries of $\overrightarrow{\Delta}$ by $|\Delta|$.
\begin{proposition}\label{partsprop} Let $s\le 6$ be fixed, let $T$ satisfy Equation \eqref{Taceq}, and fix a sequence $\mathcal S(j)$ of related JDT for $F(j)$ as for Equation \eqref{Weqn} above. Consider
$\overrightarrow{\Delta}=\overrightarrow{\Delta}(j)$ formed by Equation \eqref{deltaeq} from an AG algebra $A(j)=R/\Ann(F(j))$ for some $j\ge 6$. Assume that $k\ge s+1$.
\begin{enumerate}[(a)]
\item We have $W_{\mathcal S(j)}=(\overrightarrow{\Delta})^\vee$, the conjugate partition of $\overrightarrow{\Delta}$, independent of $j$.       
\item The total number of lengthening parts in the set of JDT $\mathcal S(j)$ is $(s-|\Delta|)$.  Also, a lengthening part is at least $j-3$. 
\end{enumerate}
\end{proposition}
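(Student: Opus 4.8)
Throughout I would work with the JDT matrix $J=J_{A(j),x}=J_M$ of Definition~\ref{def:jordan matrix}, where $M=M_{A(j),x}$, using the dictionary $\eta(p,\nu)=J_{\nu,\nu+p-1}$ and the shape of $M$ from Corollary~\ref{Mcor} (which in turn rests on the punctual Hilbert scheme map of Lemma~\ref{diaglem}): a frame of bounded width, reflecting the $1,3$ at the two ends of $T$, around a central $k\times k$ Toeplitz block whose constant diagonals are $s=r_0,r_1,\dots,r_{k-1}$.

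For part~(a): whenever the $2\times2$ block $\{(u-1,v),(u-1,v+1),(u,v),(u,v+1)\}$ lies inside the Toeplitz block, \eqref{eq:jordan matrix} collapses to
\[
J_{u,v}=r_{v-u}+r_{v-u+2}-2r_{v-u+1}=\Delta r_{v-u+1}-\Delta r_{v-u+2}\ge 0,
\]
the inequality being Corollary~\ref{deltacor}(a). So on the $p$-th diagonal of $J$ (i.e. $v-u=p-1$) the entry is, throughout the block, the constant $m_p:=\Delta r_p-\Delta r_{p+1}$, over a symmetric interval of initial degrees whose length grows linearly in $k$; since $k\ge s+1$ this is already the full symmetric interval of a genuine repeated part, so it contributes exactly a repeated part of size $p$ with multiplicity $m_p$. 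The frame has bounded width while a repeated part has unboundedly many initial degrees, so the frame produces only lengthening and symmetric fixed parts; hence $W_{\mathcal S(j)}=\{p^{\,m_p}\}_{p\ge1}$, independent of $j$. Since $\overrightarrow{\Delta}$ is a partition (Corollary~\ref{deltacor}(a)), the multiplicity of $p$ in $(\overrightarrow{\Delta})^\vee$ equals $((\overrightarrow{\Delta})^\vee)^\vee_p-((\overrightarrow{\Delta})^\vee)^\vee_{p+1}=\Delta r_p-\Delta r_{p+1}=m_p$, so $W_{\mathcal S(j)}=(\overrightarrow{\Delta})^\vee$.

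For part~(b): write $P^\vee$ for the conjugate of the partition $P_{A(j),x}$ and $D_n:=\dim x^nA=\dim A_{x^n\circ F}$, which by Macaulay duality and Lemma~\ref{ranklem}(i) is the sum of the $n$-th diagonal of $M$. Then $\#\{\text{parts of }P_{A(j),x}\text{ of size}\ge m\}=P^\vee_m=\rk(m_x^{m-1}\colon A\to A)-\rk(m_x^{m}\colon A\to A)=D_{m-1}-D_m$. For $j\gg0$ a part of size $\ge j-3$ is necessarily a lengthening part while every lengthening part has a fixed generator degree, so (b) follows once $P^\vee_m=s-|\Delta|$ for all $m\in[s+1,j-3]$: this gives $\#\{\text{parts}=p\}=P^\vee_p-P^\vee_{p+1}=0$ for $p\in[s+1,j-4]$ (so lengthening parts have size $\ge j-3$) and $\#\{\text{lengthening parts}\}=P^\vee_{j-3}=s-|\Delta|$. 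Now for $n$ in the stable range $s\le n\le k$ the $n$-th diagonal of $M$ is the almost constant Gorenstein sequence $(1,\beta_n,r_{k-1}^{\,k-n},\beta_n,1)$ with $\beta_n=M_{1,n+1}$ and $r_{k-1}=s-|\Delta|$ by Corollary~\ref{deltacor}(c),(d) (for $s=3$ the frame is one step narrower and there is no $\beta_n$ term); hence $D_{m-1}-D_m=2(\beta_{m-1}-\beta_m)+(s-|\Delta|)$ for $m-1,m$ in this range. It thus remains to show that the nonincreasing bounded sequence $\beta_n$ is constant on $[s,j-3]$. This is forced by the O-sequence condition of Lemma~\ref{ranklem}(ii): for $n\ge s$ one has $r_n=r_{n+1}$, so the difference of the $n$-th and $(n+1)$-st diagonals (aligned as in that lemma) has the form $(1,\beta_n-1,\,r_{k-1}-\beta_{n+1},\,0,\dots,0,\,\beta_n-\beta_{n+1},\,0)$, and an O-sequence cannot revive after reaching $0$, whence $\beta_n=\beta_{n+1}$; the boundary indices ($n$ near $s$, and $n=k-1,k$) are handled by the same device together with the fact that the short end diagonals must themselves be Gorenstein sequences.

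The substance of (a) is a two-line diagram chase plus a partition–conjugation identity, and the reduction of (b) to the constancy of $\beta_n$ is formal. The genuinely delicate point — and where $s\le6$, $k\ge s+1$ and $j\gg0$ all enter — is the last step: pinning down the frame of $M$ and the boundary behaviour of $\beta_n$ so as to conclude $P^\vee_m=s-|\Delta|$ on all of $[s+1,j-3]$. For $s\le6$ this can instead be read off the finite list of admissible frames compiled in Section~\ref{cod3sec}.
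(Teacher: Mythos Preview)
Your approach to part~(a) is correct and in fact cleaner than the paper's. The paper proceeds by induction on the largest part of $\overrightarrow{\Delta}$, peeling off one layer of the Toeplitz block at a time; you instead compute $J_{u,v}$ directly on the Toeplitz block as the second difference $\Delta r_{p}-\Delta r_{p+1}$ and close with the standard conjugation identity. This is a genuine simplification: it replaces an inductive layer-removal by a two-line computation. One point you leave implicit is the boundary: the $2\times2$ blocks touching rows $2$ or $j-2$ mix frame and Toeplitz data, so the constant value $m_p$ on the $p$-th diagonal of $J$ is only established for $3\le u\le j-2-p$ by your computation, and one must still argue (via symmetry of JDT, or via the frame stabilisation you invoke later) that the repeated part extends to the full symmetric range. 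The paper's Lemma~\ref{lengthenlem} handles exactly this, and you implicitly rely on the same structural fact.

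For part~(b) you work harder than necessary. The paper's argument is a one-line height count: at a middle degree the Hilbert function is $s$; repeated parts contribute $\sum_{w\in W_{\mathcal S}}w=|(\overrightarrow{\Delta})^\vee|=|\Delta|$ there, symmetric fixed parts contribute nothing for $j$ large, so lengthening parts must account for the remaining $s-|\Delta|$. Your route via $P^\vee_m=D_{m-1}-D_m$ and the constancy of $\beta_n$ is valid in principle but the O-sequence step is sketchy (the alignment of the two diagonals and the precise form of their difference need care, and the case $r_{k-1}=0$ kills the leading $1$ you assume). The bound ``lengthening parts are $\ge j-3$'' is exactly the content of the paper's Lemma~\ref{lengthenlem}, which establishes the full frame structure of $M$ and reads off that lengthening parts come only from the top-right $3\times3$ block of $J$; your $\beta_n$-constancy argument is essentially a partial re-derivation of that lemma, and your closing remark that one can ``read off the finite list of admissible frames'' is precisely the paper's strategy.
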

\begin{proof} By Corollary \ref{deltacor} $\Delta_i=0$ for $i\ge s+1$, so we assume $i\ge s+1$. We prove (a) by induction on $(s-r_1)$, the largest part of $\overrightarrow{\Delta}$.  When this is $1$, so $\overrightarrow{\Delta}=(1,1,\ldots,1_p,0,\ldots)$, then the $k\times k$ submatrix at the center of $M_{A,\ell}$ in Equation \eqref{Meq} satisfies
\small
\begin{equation}\label{aux1eq}
\begin{pmatrix}
s&s-1&s-2&s-3&\ldots\color{red}& s-p+1&s-p&s-p&\ldots&\\
&s&s-1&s-2&&\ldots&\color{red}s-p+1&s-p&s-p\ldots \\
&&s&s-1&&\ldots &&\color{red}s-p+1&s-p\ldots&\\
&&&\ddots&&&\ddots &&&\\
&&& &s&\ldots &&&\color{red} s-p+1\\
\end{pmatrix}
\end{equation}
\normalsize
The JDT matrix JDT$_{A,\ell}$ of Definition \ref{def:jordan matrix} will have in this $k\times k$ region $$1=(s-p+1)+(s-p)-2(s-p)$$  on the locations shown red in the above matrix of Equation \eqref{aux1eq} (after the first row). These are on the $p-1$ labelled diagonal of $M_{A,\ell}$ so each corresponds to a part $p_i$, that is, $p$ in degree $i$ where  $i$ is the row index in $M_{A,\ell}$: thus, $\mathcal S_{A,\ell}$ has corresponding parts $(p\uparrow_3^{j-p-3})$.\par
Now fix $t\ge 2$ and assume (a) is true for first entries of $\overrightarrow{\Delta}$ less than $t$, and consider a particular $\overrightarrow{\Delta}(0)$ with first entry $t$,  that is nonzero in degrees $0,\ldots,p-1$, then zero. We can see that the last entry of each row of the $k\times k$ submatrix of 
JDT$_{A,\ell}$ is one: that is, on its $p-1$ labelled diagonal (analogous to that shown red in the $k\times k$ matrix $M_{A,\ell}$ of Equation \eqref{aux1eq}), so this corresponds to a repeating part $(p\uparrow_i^{j+1-p-i})$ in $\mathcal S$. Removing that layer, using the induction hypothesis, we see that the repeating parts of $\mathcal S_{A,\ell}$ correspond to the conjugate partition $\Delta^\vee$ to $\Delta$.  \par
We now show (b).  The remaining entries of the JDT matrix JDT$_{A,\ell}$ outside of the repeating parts, are symmetrically located. Since the maximum height of $T$ due to repeated parts is just the length $|\Delta|$, by the construction of the matrix $M$ and the related JDT matrix, the only way to attain the height $s$ is for there to be $(s-|\Delta|)$ lengthening parts. 
\par
The last statement of (b), that a lengthening part is at least $j-3$, follows from the next result, Lemma \ref{lengthenlem}.
\end{proof}

\begin{example}\label{reppartex}
(a). Entry \#34 in Table \ref{newk=5table} is  $\mathcal S_{A,x}=((j_{0,1},2\uparrow_2^{j-3},1\uparrow_1^{j-1},1_{2,j-2}) $ for $A=R/I, I=\Ann(X^{j-1}Y+XY^{j-1}+Y^{j-2}Z^2).$ 
 Here $s=5$, the repeated parts are $2$ and $1$, and the two lengthening parts are $j_{0,1}$, while 
$\Delta=(2,1,0)$ whose dual is (also) $(2,1)$. \par
{b}. Entry \#43 in Table \ref{newk=5btable} has $\mathcal S_{A,x}=((4\uparrow_0^{j-3},2_{1,j-2},1\uparrow_2^{j-2})$ for $A=R/\Ann (X^3Y^{j-3}+XY^{j-2}Z+Z^j)$. Here the repeated parts are $4$ and $1$, while $\Delta(A)=(2,1,1,1)$, and there are no lengthening parts.\par
(c). In codimension two, the repeating parts of the associated JDT are also obtained as the conjugate partition to $\overrightarrow{\Delta}=(\Delta r_1, \Delta r_2,\Delta r_3)$. For example $\overrightarrow{\Delta}=(1,1,0)$ has repeating part $2$, and $\overrightarrow{\Delta}=(2,0,0)$ has repeating parts  $(1,1)$. See Example \ref{cod2ex}, and Tables \ref{table:122} and \ref{table:123} above.
\end{example}

\begin{lemma}\label{lengthenlem} Let $T$ be an almost constant Gorenstein sequence, with Sperner number $s=3,4,5,$ or $6$.  Then the possible lengthening parts for $\mathcal S(A,\ell), A\in \Gor(T)$ satisfy the following restrictions,
\begin{enumerate}[-]
	\item in degree 0: $(j-a)_0$ with $a=-1,0,1$,
	\item in degree 1: $(j-a)_1$ with $a=0,1,2$, 
	\item in degree 2: $(j-a)_2$ with $a=1,2,3$. 
\end{enumerate}  
No other lengthening parts can occur.
\end{lemma}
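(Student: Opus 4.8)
The plan is to read a lengthening part off the rank matrix $M_{A,\ell}$ and then to exploit the rigidity of $M_{A,\ell}$ near its two corners, forced by the narrow ends $t_0=1,\ t_1=3,\ t_2=s$ of $T$ (and, by the Gorenstein symmetry $t_i=t_{j-i}$, of its top) together with the Sperner bound $s\le 6$. Fix $(A,\ell)$ with $H(A)=T$ almost constant of Sperner number $s\le 6$ and socle degree $j$. A part $(p)_i$ of $\mathcal S_{A,\ell}$ is recorded by a nonzero entry $(J_{A,\ell})_{i,\,i+p-1}$ (Definition~\ref{def:jordan matrix}), and its $\ell$-string occupies the consecutive degrees $i,\dots,i+p-1\le j$, so automatically $p\le j+1-i$, i.e. $a=j-p\ge i-1$. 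Calling $(p)_i$ \emph{lengthening} means that $p$ is unbounded as $j$ grows while $i$ stays fixed; by Proposition~\ref{partsprop}(a) the repeated parts are exactly the blocks of width $\overrightarrow{\Delta}^{\vee}\le s$, so for large $j$ a lengthening part has $p>s$ and is none of them (a count in the central Toeplitz block of \eqref{Meq} even shows that it must pass through every middle degree, so the lengthening/repeating dichotomy is exhaustive). It remains to prove the two inequalities $i\le 2$ and $p\ge j-1-i$, equivalently $a\le i+1$.

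Consider first $i=0$. Since $t_0=1$ there is a unique $\ell$-string through degree $0$, namely $\langle 1,\ell,\dots,\ell^{q}\rangle$ with $q=\max\{v:\ell^v\notin I\}$, so any degree-$0$ lengthening part is forced to be $(q+1)_0$, and it suffices to show $\ell^{\,j-2}\notin I$, i.e. that the diagonal of $M_{A,\ell}$ through $(0,j-2)$ is nonzero. Suppose not. Recall that the diagonals of $M_{A,\ell}$ parallel to the main one are the symmetric Gorenstein Hilbert functions of the algebras $R/(I:\ell^c)$ of socle degree $j-c$ (Lemma~\ref{ranklem}(i) and the Macaulay-dual description of Section~\ref{Macsec}), that consecutive diagonals differ by an $O$-sequence (Lemma~\ref{ranklem}(ii)), and that every adjacent $2\times 2$ block of $M_{A,\ell}$ obeys \eqref{sumdiageq}. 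Starting from the vanishing of the diagonal through $(0,j-2)$ and propagating \eqref{sumdiageq} up the last few columns, the low-degree entries of the successive nonzero diagonals are forced to climb, while their differences with the preceding diagonal must remain $O$-sequences; since all these entries are bounded by $s\le 6$, after finitely many steps one of them — a symmetric Gorenstein sequence — would exceed $s$, a contradiction. Hence $q\ge j-2$ and the degree-$0$ lengthening part is one of $(j+1)_0,\ j_0,\ (j-1)_0$: the case $a\in\{-1,0,1\}$.

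The same analysis runs against the other narrow ends. Because $t_1=3$ and $t_2=s$ (and symmetrically $t_{j-1}=3,\ t_{j-2}=s$), the corner entries of $M_{A,\ell}$ in its first two and last two rows and columns are again pinned down by \eqref{sumdiageq} and the diagonal $O$-sequence condition, and — using $s\le 6$ once more — one finds $p\in\{j,j-1,j-2\}$ for a lengthening part of initial degree $1$ and $p\in\{j-1,j-2,j-3\}$ for one of initial degree $2$; these are exactly the cases $a\in\{0,1,2\}$ and $a\in\{1,2,3\}$. No lengthening part can start in degree $\ge 3$: by the JDT symmetry of Lemma~\ref{JDTsymlem}, such a part $(p)_i$ with $i\ge 3$ and $p$ unbounded is matched with $(p)_{\,j+1-i-p}$, a long part sitting in a low degree, whose common offset $a=j-p$ then violates the bounds just established for degrees $0,1,2$. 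Collecting the three cases gives precisely the asserted list, and the entries of the appended Tables together with the examples in Proposition~\ref{partsprop} and Example~\ref{reppartex} show that each admissible $a$ is actually attained, so nothing further is excluded.

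The step I expect to be the main obstacle is the corner rigidity, and in particular excluding $\ell^{\,j-2}\in I$: a priori a sufficiently non-Lefschetz pair can annihilate a sizeable power of $\ell$ (already at $k=1$ one sees short $\ell$-strings), so one cannot do without the full strength of Lemma~\ref{ranklem}(ii),(iv), and this is exactly where the hypothesis $s\le 6$ enters — it is the Sperner bound that the forced corner entries of $M_{A,\ell}$ are made to contradict. The remaining steps — the count in the middle, the transfer between the two corners via Lemma~\ref{JDTsymlem}, and the verification against the Tables that each $a$ occurs — are comparatively routine.
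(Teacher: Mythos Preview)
Your overall target—confining lengthening parts to the top-right $3\times 3$ block of $J_{A,\ell}$—is correct, but the route has a genuine gap at precisely the step you flag. The claim that $\ell^{\,j-2}\notin I$ is false in general: for $F=X(Y^{j-1}+Z^{j-1})$ and $\ell=x$ (Table~\ref{4ktable}~\#25) one has $x^2\in I$, hence $\ell^{\,j-2}\in I$ for every $j\ge4$. In that example the degree-$0$ string is a repeated $2$, not a lengthening part, so there is nothing to prove—but your contradiction paragraph never invokes the hypothesis that a lengthening part is present, and as written purports to establish $\ell^{\,j-2}\notin I$ unconditionally. The sentence ``the low-degree entries of the successive nonzero diagonals are forced to climb \dots\ would exceed $s$'' is a hope, not an argument, and the example shows it does not happen. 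The degree-$1$ and degree-$2$ cases, dispatched with ``the same analysis runs'', inherit the same defect. Your symmetry reduction for $i\ge 3$ is also incomplete: the partner of $(j-a)_i$ sits at degree $a+1-i$, and your bounds for degrees $\le 2$ bite only when $a\le i+1$; a self-symmetric part with $a=2i-1\ge 5$ (e.g.\ a putative $(j-5)_3$) escapes the argument entirely.

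The paper's proof replaces all of this with one structural observation. Since $\overrightarrow{\Delta}$ is non-increasing, non-negative, and sums to at most $s$ (Corollary~\ref{deltacor}), one has $r_s=r_{s+1}=\cdots=r_{k-1}$. Now Lemma~\ref{ranklem}(iii),(iv) together force $\alpha=\beta$ in any adjacent block $\left(\begin{smallmatrix}\alpha&\beta\\\gamma&\delta\end{smallmatrix}\right)$ of $M_{A,\ell}$ with $\gamma=\delta$; propagating this upward from the constant stretch of the central Toeplitz block makes $r_{0,*}$ and $r_{1,*}$ constant across the entire middle range of columns. Writing out $J_{A,\ell}$ from \eqref{eq:jordan matrix} then exhibits zeros in rows $0,1,2$ at every middle column, and zeros in the last three columns of every row from $3$ through $j-3$, so the only nonzero entries producing parts whose length grows with $j$ lie in the top-right $3\times3$ corner. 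This handles all initial degrees simultaneously, with no appeal to $\ell^{\,j-2}\notin I$ and no separate symmetry step for $i\ge 3$.
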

\begin{proof}
Since the sequence $\Delta$ is non-increasing, non-negative and its elements add up to at most $s$, then $\Delta_i=0$ for $i\ge s$, and therefore $r_s=r_{s+1}=\dots=r_{k-1}$. Furthermore, if $\begin{pmatrix}
	\alpha&\beta\\
	\gamma&\delta
\end{pmatrix}$ is an adjacent submatrix of the rank matrix such that $\gamma=\delta$, then $\alpha=\beta$.  
The rank matrix can then be written as follows:

$$\begin{pmatrix}
	1&r_{0,j-1}&\dots&r_{0,j-s}&r_{0,j-s-1}&\color{blue}r_{0,j-s-2}&\color{blue}r_{0,j-s-2}&\color{blue}\dots&\color{blue}r_{0,j-s-2}&r_{0,1}&r_{0,0}\\
	&3&r_{1,j-2}&\dots&r_{1,j-s-1}&\color{blue}r_{1,j-s-2}&\color{blue}r_{1,j-s-2}&\color{blue}\dots&\color{blue}r_{1,j-s-2}&r_{1,1}&r_{0,1}\\
	&&s&r_1&\dots&\color{blue}r_s&\color{blue}r_s&\color{blue}\dots&\color{blue}r_s&r_{1,j-s-2}&r_{0,j-s-2}\\
	&&&s&r_1&\dots&\color{blue}r_s&\color{blue}\ddots&\color{blue}\vdots&\vdots&\vdots\\
	&&&&\ddots&\ddots&&\color{blue}\ddots&\color{blue}r_s&r_{1,j-s-2}&r_{0,j-s-2}\\
	&&&&&\ddots&\ddots&&\color{blue}r_s&r_{1,j-s-2}&r_{0,j-s-2}\\
	&&&&&&\ddots&\ddots&\vdots&r_{1,j-s-1}&r_{0,j-s-1}\\
	&&&&&&&s&r_1&\vdots&r_{0,j-s}\\
	&&&&&&&&s&r_{1,j-2}&\vdots\\
	&&&&&&&&&3&r_{0,j-1}\\
	&&&&&&&&&&1
\end{pmatrix}$$
Furthermore, since $r_{0,a}=0$ if one of the elements on the same diagonal is 0 and 1 otherwise, then $r_{0,j-s}=r_{0,j-s-1}=r_{0,j-s-2}$

Define $J_i=r_i+r_{i+2}-2r_{i+1}$. The Jordan degree type matrix is therefore 
$$\begin{pmatrix}
	J_{0,0}&J_{0,1}&\dots&J_{0,s-1}&0&0&0&0&\dots&0&J_{0,j-2}&J_{0,j-1}&J_{0,j}\\
	&J_{1,1}&J_{1,2}&\dots&J_{1,s}&J_{1,s+1}&0&0&\dots&0&J_{1,j-2}&J_{1,j-1}&J_{0,j-1}\\
	&&J_{2,2}&\dots&\dots&J_{2,s+1}&0&0&\dots&0&J_{2,j-2}&J_{1,j-2}&J_{0,j-2}\\
	&&&J_0&\dots&\dots&J_{s-1}&0&\dots&0&0&0&0\\
	&&&&\ddots&&\ddots&\ddots&\ddots&&\vdots&\vdots&\vdots\\
	&&&&&\ddots&&\ddots&\ddots&0&0&0&0\\
	&&&&&&\ddots&&\ddots&J_{s-1}&0&0&0\\
	&&&&&&&\ddots&&\vdots&J_{2,s+1}&J_{1,s+1}&0\\
	&&&&&&&&\ddots&\vdots&\vdots&J_{1,s}&0\\
	&&&&&&&&&J_0&\vdots&\vdots&J_{0,s-1}\\
	&&&&&&&&&&J_{2,2}&J_{1,2}&\vdots\\
	&&&&&&&&&&&J_{1,1}&J_{0,1}\\
	&&&&&&&&&&&&J_{0,0}\\
\end{pmatrix}$$

Note that the lengthening parts come from the top right $3\times 3$-submatrix of the Jordan degree type matrix. This completes the proof of Lemma \ref{lengthenlem}.
\end{proof}
See Example \ref{4.10ex}, for an application of Lemma \ref{lengthenlem}.

\subsection{Classification of JDT for $s\le 5$.}\label{mainsec2}
We here report on the classification of all occurring JDT for graded Artinian algebras of almost constant Hilbert function $T$ of codimension three with Sperner number $3,4,5$ - that are given in the Tables Section \ref{tablesec}. The results of this section assume $\cha {\sf k}=0$. For $\cha {\sf k}$ finite see Remark \ref{inffieldrem}.\par
We first determine the occurring JDT for $T$ of Sperner number 3.
\begin{theorem}\label{k3prop} A. Let $T=(1,3^k,1)$. Suppose first that $k\ge 3$. Then \#1-8 in Table \ref{1351fig} is a complete table of the eight occurring JDT for algebras $A\in\Gor(T)$. \par 
B. When $k=2$, \# 2 is the same as \# 3 and \#6 is the same as \#7 but there is an extra \# 9. When $k=1$ only \# 1 and \# 8 are distinct. 
\end{theorem}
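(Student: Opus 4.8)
The plan is to treat part A and part B separately, since part A is essentially the combination of Theorem \ref{s=3thm} (which produces the $8$ potential rank matrices, hence $8$ potential JDT, for $k\ge 3$) with an exhibition of an explicit AG algebra realizing each one. First I would recall from Theorem \ref{s=3thm} and Table \ref{s=3tab} that for $k\ge 3$ there are exactly $8$ potential JDT, indexed by the six $\overrightarrow{\Delta}$-sequences $(0,0,0),(1,0,0),(2,0,0),(1,1,0),(2,1,0),(1,1,1)$ together with the two choices of top-right corner in the first two cases. It then remains to show each is realized. The natural device is Macaulay duality (Lemma \ref{maclem}): for each row of Table \ref{1351fig} I would write down a dual generator $F\in S_j$ and a linear form (the standard choice $\ell=x$), compute the diagonals of $M_{A_F,x}$ as the Hilbert functions $H(A_{x^a\circ F})$ via the Corollary after Lemma \ref{maclem}, read off the rank matrix, and check it matches the target; then the JDT follows from the JDT matrix $J_M$ of Definition \ref{def:jordan matrix}. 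For an arbitrary $k$ (arbitrarily large socle degree) one needs families of dual generators $F(j)$ whose behaviour stabilizes — e.g. forms like $X^{j-1}Y+\cdots$, $X^{j-1}Y+XY^{j-1}$, or monomials $X^aY^bZ^c$ — and here Proposition \ref{partsprop} and Lemma \ref{lengthenlem} do the bookkeeping: they tell us the repeating parts are $(\overrightarrow{\Delta})^\vee$ and the lengthening parts are the $(s-|\Delta|)$ remaining ones, so once the $\overrightarrow{\Delta}$-value and the small (degree $\le 2$) corner data of a given $F(j)$ are pinned down, the whole JDT is determined uniformly in $j$. So the key steps are: (1) quote the count $8$ from Theorem \ref{s=3thm}; (2) for each of the $8$ rows exhibit $F$ and verify the rank matrix by the dual-generator computation; (3) invoke Proposition \ref{partsprop}/Lemma \ref{lengthenlem} to confirm the verification propagates to all $k\ge 3$.

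For part B I would argue by degeneration of the same list as $k$ drops. When $k=2$ the socle degree is $j=3$, so $\overrightarrow{\Delta}$ has length $k-1=1$ and $\overrightarrow{r}=(3,r_1)$ has only two entries; the distinction between $\overrightarrow{\Delta}=(1,0,0)$-type and $(1,1,0)$-type data collapses, which is exactly why \#2 coincides with \#3 and \#6 with \#7 — the strings are too short for the repeated-vs-symmetric-fixed distinction of Definition \ref{partsdef} to be visible. Concretely I would list all rank matrices of the form \eqref{eq: s=3 matrix} with $j=3$ satisfying Lemma \ref{ranklem}, observe the identifications, and note that one rank matrix (the "extra \#9") arises for $T=(1,3,3,1)$ that did not appear as a stable member of the $k\ge 3$ family — this is the JDT $\mathcal S_3=({\sf 4}_0,({\sf 2}_1)^2)$ already flagged in the Example after Lemma \ref{JDTsymlem}, with dual generator of the type appearing in Table \ref{1351fig}. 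When $k=1$, $T=(1,3,1)$ has socle degree $j=2$ and is the apolar algebra of a nondegenerate quadric; $\overrightarrow{\Delta}$ is empty and the rank matrix is essentially forced, leaving only the generic JDT (\#1) and the most degenerate one (\#8) distinct, all others having collapsed.

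The main obstacle will be step (2) for large $k$: verifying that a single explicit family $F(j)$ really does produce the claimed rank matrix for every $k\ge 3$, rather than just for a few small cases checked by computer. The cleanest way around this is to lean hard on Lemma \ref{diaglem} and Corollary \ref{Mcor}: for $k\ge 3$ the central $k\times k$ block of $M_{A_{F(j)},x}$ is Toeplitz, so it is determined by the finitely many numbers $r_1,\dots,r_{s}$ (and $r_i=r_s$ for $i\ge s$ by Corollary \ref{deltacor}(d)), and these can be read off from the single algebra $A_{F(j_0)}$ for one sufficiently large $j_0$; the remaining "blue" border entries of \eqref{Meq} are then forced by the Gorenstein-sequence condition on each diagonal (Lemma \ref{ranklem}(i)) together with the $2\times2$ corner data. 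Thus a finite computation at one socle degree, plus Proposition \ref{partsprop}, suffices — and this is the argument I would write out, deferring the actual dual generators and their one-time rank computations to the Tables of Appendix \ref{tablesec}, exactly as the statement of Theorem \ref{2thm} promises.
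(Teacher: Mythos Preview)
Your overall plan for Part~A is essentially the paper's own: quote the upper bound of $8$ from Theorem~\ref{s=3thm}, exhibit explicit dual generators for each row of Table~\ref{1351fig}, and verify case by case (the paper does this in a combined proof for Theorems~\ref{k3prop}, \ref{s=4thm}, \ref{s=5thm}, deferring the verifications to the tables and the illustrative examples of Section~\ref{exsec}). Your appeal to Proposition~\ref{partsprop} and Lemma~\ref{lengthenlem} for the propagation to arbitrary~$k$ is a somewhat more explicit version of what the paper sketches; note however that Proposition~\ref{partsprop} as stated requires $k\ge s+1=4$, so the case $k=3$ still needs a direct check.

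There is one genuine error in your treatment of Part~B. You identify the ``extra \#9'' for $k=2$ as $\mathcal S_3=(4_0,(2_1)^2)$ from the Example after Lemma~\ref{JDTsymlem}. But $(4_0,(2_1)^2)$ is precisely row \#1 of Table~\ref{1351fig} specialized to $j=3$ (since $((j+1)_0,((j-1)_1)^2)$ becomes $(4_0,(2_1)^2)$ when $j=3$), so it is already in the stable list and cannot be the ``extra'' one. The actual \#9 in Table~\ref{1351fig} is $(2\uparrow_0^2,2_1)$, with dual generator $F=X(Z^2+Y^2)$ --- Jordan type $(2,2,2,2)$, not $(4,2,2)$. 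This JDT does not arise as the $k=2$ specialization of any of rows \#1--8, which is why it is listed separately and starred. Your degeneration argument, as written, would not have produced it; you need to revisit the direct enumeration of rank matrices for $T=(1,3,3,1)$ and see that this extra possibility (with $r_1=1$ and the particular corner pattern) appears only there.
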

We next consider Sperner number 4.
\begin{theorem}\label{s=4thm}\footnote{In Table \ref{4ktable} we indicate the Jordan degree types that occur for Complete intersection algebras by CI and the ones that occur for connected sum algebras by cs, for more details on connected sum algebras see \cite{IMM}.}  Let $T=(1,3,4^k,3,1)$. When $k\ge 3$ there are exactly 26 JDT that actually occur, as presented in Table \ref{4ktable}. When $k=2$,  there are exactly 22 JDT, as then  \#6=\#7, \#10=\#11, \#16=\#18, and \#23=\#24 in Table \ref{4ktable}.  When $k=1$ there are exactly 12 JDT arising from Table \ref{4ktable} \#1,\#3=\#5,7,8,12,13,15,19,20,24,25, and \#26.
\end{theorem}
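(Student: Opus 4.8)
The plan is to prove Theorem~\ref{s=4thm} by matching the \emph{upper bound} of $26$ potential JDT from Theorem~\ref{4kthm} (equivalently, the rank matrices counted in Table~\ref{s4tab}) against \emph{explicit realizations}: for each of the $26$ potential rank matrices $M$ we exhibit a pair $(A,\ell)$ with $A=R/\Ann(F)\in\Gor(T)$ and $\ell$ a linear form whose rank matrix is $M$. Since by \cite[Proposition~3.12]{Al1} (recalled after Example~\ref{rankex}) the rank matrix of a graded AG algebra determines and is determined by the JDT, producing one algebra per rank matrix both shows all $26$ occur and that there are no others. So the structure is: (i) cite Theorem~\ref{4kthm} for the list of $26$ candidate rank matrices / JDT; (ii) for each, give a Macaulay dual generator $F\in S_j$ (and the ideal $I_F$) together with a specific $\ell\in R_1$, which is exactly the data recorded in Table~\ref{4ktable}; (iii) verify that $H(A_F)=(1,3,4^k,3,1)$ and that the diagonals of $M_{A_F,\ell}$, i.e.\ the Hilbert functions $H(A_{\ell^a\circ F})$ (Corollary after Lemma~\ref{maclem}), match the prescribed rank matrix. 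Step (iii) is a finite computation made uniform in $k$ by choosing the $F(j)$ in compatible families with $F(j-1)=x\circ F(j)$ as in Proposition~\ref{partsprop}, so that one checks a stable pattern rather than infinitely many cases.

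For the case analysis in part (iii) I would organize the $26$ by the $\overrightarrow{\Delta}$ rows of Table~\ref{s4tab}. The rows with $r_1<4$ (all but row~1) reduce by Lemma~\ref{diaglem} and the recursive bookkeeping already used in the proof of Theorem~\ref{4kthm}: the second diagonal is itself a Gorenstein sequence of codimension two or three with Sperner number $\le 3$, so the realizing $F$ is built from a realizing example for $(1,3^{k+1},1)$ (Theorem~\ref{k3prop}) or for the codimension-two almost constant sequences (Theorem~\ref{cod2lem}, Tables~\ref{table:122},~\ref{table:123}) by adding a suitable monomial in a new variable or adjusting a coefficient. The essential new content is row~1, $\overrightarrow{\Delta}=(0,0,0,0)$, where there are five rank matrices distinguished only by the top-right $3\times 3$ corner; here I would write down five explicit dual generators (e.g.\ perturbations of $X^{j-2}(\cdots)$ by lower "tail" terms in $Y,Z$) and compute the corner entries directly via contraction, using the symmetry Lemma~\ref{JDTsymlem} to cut the work in half and Lemma~\ref{ranklem}(iv) (the inequality \eqref{sumdiageq}) plus Lemma~\ref{Soccurlem} to confirm no sixth corner is consistent.

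For part B (small $k$): when $k=2$ the socle degree is $j=k+3=5$, so some of the ``lengthening'', ``repeated'', and ``symmetric fixed'' parts of Definition~\ref{partsdef} collide --- e.g.\ a part $(j-3)_i$ and a repeated part $w\uparrow_i^{j+1-i-w}$ can coincide in length, or a repeated string of length $\le 2$ degenerates --- and I would simply identify which of the $26$ JDT become equal by substituting $j=5$ into the list in Table~\ref{4ktable}, confirming the stated coincidences $\#6{=}\#7$, $\#10{=}\#11$, $\#16{=}\#18$, $\#23{=}\#24$, leaving $22$. For $k=1$ ($j=4$, so $T=(1,3,4,3,1)$) one does the same collapse with $j=4$, and additionally notes that several $\overrightarrow{\Delta}$ with long support are vacuous since $\overrightarrow{r}=(r_0,\dots,r_{k-1})$ has only one free entry $r_0=s=4$ with $k-1=0$ interior entries --- so only the rows of Table~\ref{s4tab} consistent with this survive, yielding the $12$ listed entries $\#1,\#3{=}\#5,7,8,12,13,15,19,20,24,25,\#26$; each of these $12$ is realized by the corresponding $k=1$ example in Table~\ref{4ktable} (these are small, so direct verification is immediate). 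I expect the main obstacle to be the uniform-in-$k$ verification for the $r_1=4$ reductions: one must check that the chosen family $F(j)$ not only has the right Hilbert function and right second diagonal, but that \emph{all} lower diagonals match the prescribed Toeplitz block of \eqref{Meq}, which requires confirming that adding the tail monomials does not perturb ranks $\rk(m_{\ell^a}\colon A_i\to A_{i+a})$ in the stable range --- this is where Lemma~\ref{diaglem} (constancy of diagonals in the Toeplitz block) does the heavy lifting and must be invoked carefully.
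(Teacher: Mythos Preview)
Your proposal is correct and matches the paper's approach: the paper's proof of Theorems~\ref{k3prop}, \ref{s=4thm}, \ref{s=5thm} is given jointly and consists precisely of citing Theorem~\ref{4kthm} for the upper bound of $26$ potential rank matrices, then exhibiting the dual generators and ideals recorded in Table~\ref{4ktable} as realizing pairs $(A,\ell)$, with case-by-case verification (sample methods in Section~\ref{exsec}) and the small-$k$ collapses read off by substituting $j=5$ or $j=4$ into the JDT expressions. Your organization of the verification by the rows of Table~\ref{s4tab} and the reduction to lower-Sperner realizations when $r_1<4$ is exactly the recursive pattern the paper uses, and your use of compatible families $F(j-1)=x\circ F(j)$ to make the check uniform in $k$ is the technique illustrated in Section~\ref{exsec}.

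One small correction: for $k=1,2$ the $\overrightarrow{\Delta}$ framework of Corollary~\ref{Mcor} does not literally apply (it requires $k\ge 3$), so the reduction to $12$ and $22$ JDT is purely by identifying coincidences among the JDT strings in Table~\ref{4ktable} after substituting $j=4$ or $j=5$, not by pruning $\overrightarrow{\Delta}$ sequences; your phrasing ``several $\overrightarrow{\Delta}$ with long support are vacuous'' is a bit misleading here, though the conclusion you draw is the intended one.
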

We next consider Sperner number 5.

\begin{theorem}\label{s=5thm} Let $T=(1,3,5^k,3,1)$.  When $k\ge 4$  (so $j\ge 7$) there are exactly 47 JDT, all distinct, that occur for $T$, listed inTables  \ref{newk=5table},\ref{newk=5btable}, where we give the JDT, a suitable dual generator (in all but one case) and the ideal, in terms of $j$. When $k=3$ there are identities \#9=\#11, \#10=\#12, \#29=\#30, and \#39=\#42, and there are exactly 43 distinct JDT. When $k=1$ there are $12$ distinct, occurring JDT - (Table \ref{13531table}). When $k=2$ there are $35$ distinct JDT: see Table  \ref{55table}, where we give in the last column the corresponding Table~\ref{newk=5table}/\ref{newk=5btable} entry).
\end{theorem}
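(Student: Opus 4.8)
The plan is to prove the four assertions separately, in each case pairing an already-established upper bound with explicit constructions. For $k\ge 4$ (equivalently $j\ge 7$) Theorem~\ref{5kthm} gives that there are at most $47$ potential JDT, and by \cite[Proposition~3.12]{Al1} (recalled just after Example~\ref{rankex}) these correspond bijectively to the rank matrices of shape \eqref{Meq} satisfying Corollary~\ref{Mcor} and Lemma~\ref{ranklem}; similarly Theorem~\ref{5kbthm} gives the bound $43$ for $k=3$. Hence it suffices to (i) show that every one of these potential rank matrices is attained by some pair $(A,\ell)$ with $A\in\Gor(T)$, $\ell\in\mathfrak m$, and (ii) for the small-$k$ statements, determine which of the attained matrices coincide, or fail to occur, once the socle degree drops. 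Part (i) is where the hypothesis $\cha{\sf k}=0$ is used; the counting in (ii) is then bookkeeping.

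\textbf{Realization via Macaulay duality.} For each admissible difference sequence $\overrightarrow{\Delta}$ (the sixteen rows of Table~\ref{s5tab}) and each admissible choice of the top‑right corner data $r_{0,0},r_{0,1},r_{1,1}$ of \eqref{Meq}, I would write down a dual generator $F=F(j)\in S_j={\sf k}[X,Y,Z]_j$ whose monomial exponents depend linearly on $j$, set $A=R/\Ann(F)$ and $\ell=x$, and check that $M_{A,x}$ is the target matrix. By the Corollary following Lemma~\ref{maclem}, the diagonals of $M_{A,x}$ parallel to the main diagonal are the Hilbert functions $H(A_{x^a\circ F})$, $a=0,1,2,\dots$, so the verification reduces to forming the contraction chain $x^a\circ F(j)$ and reading off a short list of Hilbert functions; the JDT is then recovered from $M_{A,x}$ via the JDT matrix of Definition~\ref{def:jordan matrix}. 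The single JDT that does not come from a convenient Macaulay dual generator is instead realized by exhibiting an ideal $I\subset R$ directly and computing ranks as in Example~\ref{rankex}. All these families $F(j)$, together with the corresponding ideals, are assembled in Tables~\ref{newk=5table}--\ref{newk=5btable}.

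\textbf{Making the check uniform in $j$, and the small-$k$ cases.} To avoid verifying each $j\ge 7$ individually I would use the structural results of Section~\ref{threesec}. With $\overrightarrow{\Delta}$ fixed, Lemma~\ref{diaglem} forces the $k\times k$ Toeplitz core of $M_{A,x}$ to be constant along diagonals, so its interior is determined by $(r_1,\dots,r_{k-1})$; Proposition~\ref{partsprop} then identifies the repeating parts of $\mathcal S_{A,x}$ with the conjugate partition $(\overrightarrow{\Delta})^\vee$ and fixes the number of lengthening parts to $s-|\Delta|=5-|\Delta|$, while Lemma~\ref{lengthenlem} confines the remaining freedom to the top‑right $3\times 3$ corner of the JDT matrix. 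Thus for each family only the first two diagonals (coming from the leading $1,3$ and trailing $3,1$ of $T$) and that $3\times 3$ corner need to be examined, a computation of bounded size that is independent of $j$; pairwise distinctness of the resulting $47$ JDT is read off from the tables. For $k=3$ the same families specialise to $j=6$ and, running the corner computation there, one finds that exactly the four pairs \#9=\#11, \#10=\#12, \#29=\#30, \#39=\#42 become identical — these are the JDT distinguished in the general range only by a feature (a lengthening-versus-repeating distinction, or the precise length of a long part) that disappears at $j=6$ — giving $43$ and matching Theorem~\ref{5kbthm}. For $k=2$ (socle degree $5$) and $k=1$ (socle degree $4$) the Hilbert function is small enough to enumerate the potential rank matrices by hand and realize each of them; the counts $35$ and $12$, and the cross‑references to the general list, are recorded in Tables~\ref{55table} and \ref{13531table}.

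\textbf{The main obstacle.} The substantive difficulty is the uniform-in-$j$ part: for each of the $47$ parametrized families one must confirm that the contraction chain $x^a\circ F(j)$ has the predicted Hilbert function for \emph{every} $j$ in the stated range, and in particular that no low-degree accident — a monomial that vanishes after contraction, or two exponents that collide — disturbs the pattern. This is precisely why $k=1,2,3$ must be handled as separate cases. The reductions furnished by Proposition~\ref{partsprop} and Lemma~\ref{lengthenlem} are what make this feasible, cutting an a priori $j$-dependent verification down to a finite check at the corners of the rank matrix.
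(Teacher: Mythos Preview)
Your proposal is correct and follows essentially the same route as the paper: pair the upper bounds from Theorems~\ref{5kthm} and~\ref{5kbthm} with explicit realizations via the Macaulay dual generators (or, in the one exceptional case, a direct ideal) assembled in the tables, and then handle the small-$k$ collapses by specialization. The paper's own proof is terser and more ostensive --- it points to the tables and defers the verification techniques to the worked examples in Section~\ref{exsec}, where the emphasis is on computing $\ell$-strings directly from a standard basis and on the inductive step $F\mapsto x\circ F$ rather than on invoking Proposition~\ref{partsprop} and Lemma~\ref{lengthenlem} as you do --- but the underlying structure of the argument is the same.
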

\vskip 0.2cm\noindent
\begin{proof}[Proof of Theorem \ref{k3prop}, Theorem \ref{s=4thm} and Theorem \ref{s=5thm}.]
The restriction on the number of potential JDT are shown in each case of almost constant $T$ for $3\le s\le 5$ in Section \ref{potentialsec}, Theorem \ref{s=3thm} for $(s=3) $, Theorem \ref{4kthm} for $(s=4)$, Theorems \ref{5kthm} and \ref{5kbthm} for $s=5$ and 
Theorem \ref{s=6thm} for $s=6$.  The tables - and especially the dual generators we found were constructed using several techniques, and verified case by case. We give examples of the techniques we used to verify table entries in the next Section. We of course used {\sc Macaulay2} in verifying some dual generators for specific $j$, as a check, but our techniques apply - as they must - to arbitrary socle degree $j$.
\end{proof}
For Sperner number $6,\, T=(1,3,6^k,3,1)$ we did not construct tables of the actually occurring JDT so have not verified that each potential JDT from Theorem \ref{s=6thm} actually occurs.\par
\begin{remark}[When the characteristic $\cha{\sf k}$ is finite]\label{inffieldrem}  Many results in this area require a restriction to fields of characteristic zero, or of characteristic $p$ higher than the socle degree $j$ of $A$. For example, in characteristic $2$ the algebra $A=R/I, I=(x^2,y^2,z^2)=\Ann(F), F=XYZ, H(A)=(1,3,3,1)$ is not strong Lefschetz: any linear form $\ell$ satisfies $\ell^2=0$, and for a generic $\ell$, we have $\mathcal S_{A,\ell}=(2\uparrow_0^2,2_1)$, rather than the strong Lefschetz JDT $H^\vee=(4_0,(2_1)^2)$.   Example \ref{MAex} above with $F=X^3+Y^3+Z^3$ shows that $\ell=x+y+z$, although a strong Lefschetz element when the characteristic of $\sf k$ is not $3$, is not a strong Lefschetz element over characteristic $3$. Thus, ostensibly, the set $\mathcal J(T)$ of JDT for a given Gorenstein sequence $T$ might depend upon the characteristic of $\sf k$: that is, for certain fields of small characteristic, some of the entries in our Tables for $s=3,4,5$ might not appear.  However we do not have any such example. Our Macaulay dual generators are notable in that, with the exception of one case, their coefficients on monomials are one; and the linear form $\ell=x$ in all but the two cases $s=3$, Table \ref{1351fig}, entries \# 1,2, where $F=X^j+Y^j+Z^j$ and $\ell=x+y+z$ or $\ell=x+y$. Both these JDT occur for suitable Artinian algebras $A(j)=R/\Ann (F(j))$ and the linear form $x$.
\end{remark}
\subsection{Examples and Methods.}\label{exsec}
We here give examples illustrating the methods we used to determine the JDT in the tables. The following example illustrates how we obtain the JDT $\mathcal J(T)$ for a given $T$ and sequence $\overrightarrow{\Delta}$.
\begin{example}\label{4.10ex}
	Consider the Hilbert function $H=(1,3,4^k,3,1)$ and the sequence $(r_0,r_1,\dots,r_k)=(4,3,3,\dots,3)$, so $\overrightarrow{\Delta}=(1,0,\dots, 0)$. The second diagonal of the rank matrix can be either the Hilbert function of a codimension 2 algebra $(1,2,3^{k-1},2,1)$, or that of a codimension 3 algebra $(1,3^{k+1},1)$. These two cases are illustrated by the below examples. The matrices $M_{A,\ell}$ and $J_{A,\ell}$ below are an example of a rank matrix and its corresponding JDT matrix for the first case. The JDT is  $((j+1)_0,(j-1)_1,(j-3)_2,1\uparrow_1^{j-1})$. The matrices $M'_{A,\ell}$ and $J'_{A,\ell}$ are an example for the second case. The Jordan degree type in this case is  $((j+1)_0,((j-1)_1)^2,1\uparrow_2^{j-2})$. 
 \small
	\begin{equation*}
		M_{A,\ell}=\begin{pmatrix}
			1&1&1&1&1&\dots &1&1&1\\\
			&3&2&2&2&\dots &2 &2&1\\\
			&&4&3&3&\dots&3 &2&1\\
			&&&4&3&\dots & 3&2&1\\
			&&&&\ddots&\ddots&\vdots &\vdots&\vdots\\
			&&&&&\ddots&3&2&1\\
			&&& &&&4&2&1\\
			&&& &&&&2&1\\
			&&& &&&&&1\\
		\end{pmatrix},J_{A,\ell}=\begin{pmatrix}
		0&0&0&0&0&\dots &0&0&1\\\
		&1&0&0&0&\dots &0&1&0\\\
		&&1&0&0&\dots&1 &0&0\\
		&&&1&0&\dots & 0&0&0\\
		&&&&\ddots&\ddots&\vdots &\vdots&\vdots\\
		&&&&&\ddots&0&0&0\\
		&&& &&&1&0&0\\
		&&& &&&&1&0\\
		&&& &&&&&0\\
		\end{pmatrix}.
	\end{equation*}
	
\begin{equation*}
	M'_{A,\ell}=\begin{pmatrix}
		1&1&1&1&1&\dots &1&1&1\\\
		&3&3&3&3&\dots &3 &3&1\\\
		&&4&3&3&\dots&3 &3&1\\
		&&&4&3&\dots & 3&3&1\\
		&&&&\ddots&\ddots&\vdots &3&1\\
		&&&&&\ddots&3&3&1\\
		&&&& &&4&3&1\\
		&&&& &&&3&1\\
		&&&& &&&&1\\
	\end{pmatrix}, J'_{A,\ell}=\begin{pmatrix}
		0&0&0&0&0&\dots &0&0&1\\\
		&0&0&0&0&\dots &0&2&0\\\
		&&1&0&0&\dots&0 &0&0\\
		&&&1&0&\dots & 0&0&0\\
		&&&&\ddots&\ddots&\vdots &\vdots&\vdots\\
		&&&&&\ddots&0&0&0\\
		&&&& &&1&0&0\\
		&&&& &&&0&0\\
		&&&& &&&&0\\
	\end{pmatrix}.
\end{equation*}\normalsize
In both examples, the lengthening parts are given by the nonzero elements of the $3\times 3$ top right corner submatrix of the Jordan degree type matrix (see Lemma \ref{lengthenlem}). For instance, the entries 1 on the first row and 2 on the second row of the submatrix $\begin{pmatrix}
	0&0&1\\
	0&2&0\\
	0&0&0
\end{pmatrix}$ of $J'_{A,\ell}$ give $(j+1)_0$ and twice $(j-1)_1$. The repeated entries $1$ on the diagonal of each JDT give the repeated portion  $1\uparrow_2^{j-2}$ for the first example, and $1\uparrow_1^{j-1}$ for the second.  \par

\end{example}
For all of the table entries for $s=3$ and $s=4$, and all but one for $s=5$ we are able to provide a dual generator for an algebra having the required JDT. An exception is \# 3 for $s=5$, where the dual generator can be found for small cases
(we calculated up to $j=20$ using {\sc Macaulay2}, but were unable to find a pattern). In this case we instead simply show (next) that for arbitrary socle degree $j$, the algebra $A=R/I(j)$ is Gorenstein of the expected Hilbert function.
\begin{example}
For each of the ideals in the third column of the table \ref{newk=5table} we can compute a Gr\"{ o}bner basis (standard basis)  using homogeneous degree rvlex order with $z > y> x$. \\
We consider the case \#3 for $T=(1,3,5^k,3,1)$ of socle degree $j=k+3$, where the ideal $I(j)$ (we will call $I$ for simplicity) is
\begin{equation}
I=\left(xy-yz,x^3-z^3,y^3-x^2z-y^2z-xz^2+yz^2-z^3,x^{j-7}yz^5,x^{j-5}z^4\right).
\end{equation}
It is straightforward to verify that the Hilbert function $H(R/I)=T$.
Since $yz\equiv xy$ and $z^3\equiv x^3 \ \text{mod}\,I$, one can check that 
$$I=\left(z^3-x^3, zy-yx, y^3-z^2x-y^2x-zx^2+yx^2-x^3,zx^{j-2},yx^{j-2}\right)$$ and a standard basis for $I$ is given by 
$$\left(z^3-x^3, zy-yx, y^3-z^2x-y^2x-zx^2+yx^2-x^3,zx^{j-2},yx^{j-2},x^{j+1}\right).$$
The strings describing the JDT of the algebra $R/I$ are the following: 
\begin{itemize}
\item a length $(j+1)$ string starting at degree $0$: $(1, x, \dots, x^i, \dots, x^j)$
\item two length $(j-2)$ strings starting both at degree one: 
$$(z, zx, \dots, zx^i, \dots, zx^{j-3}) \ \text{and} \ (y, yx, \dots, yx^i, \dots, yx^{j-3})$$
\item two length $(j-2)$ strings, both starting at degree two: 
$$(z^2, z^2x, \dots, z^2x^i, \dots, z^2x^{j-3}) \ \text{and} \  (y^2, y^2x, \dots, y^2x^i, \dots, y^2x^{j-3}).$$
\end{itemize}
These strings give the JDT $(\left(j+1\right)_0, \left(( j-2\right)_{1,2})^2)$ for the algebra $A=R/I$. 
\par It remains to show that $A$ is Gorenstein. We check that the socle of the graded algebra $A$ has dimension one. 
\par
From the five strings that describe the JDT of $A$, one can see that for $2\leq i\leq j-2$, a basis of $A_i$ is given by $\left(z^2x^{i-2}, y^2x^{i-2},zx^{i-1},yx^{i-1},x^i\right)$, 
$A_{j-1}=\langle z^2x^{j-3}, y^2x^{j-3}, x^{j-1}\rangle$,  $A_{j}=\langle x^{j}\rangle$.
\par
It is obvious that for $0\leq i \leq j-3$, the multiplication by $x$, $m_x: A_i \rightarrow A_{i+1}$ is injective. This means that any non zero homogeneous element $f$ of the socle of $A$ has degree at least $j-2$.
\par
We have $A_{j-2}=\langle z^2x^{j-4}, y^2x^{j-4},zx^{j-3},yx^{j-3},x^{j-2}\rangle$. \\
Let $(a_1, \dots , a_5)\in \sf k^5$, $f=a_1z^2x^{j-4}+a_2y^2x^{j-4}+a_3zx^{j-3}+a_4yx^{j-3}+a_5x^{j-2} \in A_{j-2}$ and suppose $xf=yf=zf=0$.
\begin{enumerate}[i.]
\item Since $zx^{j-2}\in I$ and $yx^{j-2}\in I$, $xf=0$ gives $a_1z^2x^{j-3}+a_2y^2x^{j-3}+a_5x^{j-2}=0$. Since $(z^2x^{j-3}, y^2x^{j-3}, x^{j-1})$ is a basis of $A_{j-1}$, 
we get $a_1=a_2=a_5=0$, so $f=a_3zx^{j-3}+a_4yx^{j-3}$.
\item Let $f=a_3zx^{j-3}+a_4yx^{j-3}$ and suppose $yf=a_3zyx^{j-3}+a_4y^2x^{j-3}=0$. We know that $zy-yx\in I$, so $yf=a_3yx^{j-2}+a_4y^2x^{j-3}=a_4y^2x^{j-3}$. This means that $yf=0$ gives $a_4y^2x^{j-3}=0$, so $a_4=0$ and $f=a_3zx^{j-3}$.
\item For $f=a_3zx^{j-3}$ one can see that $zf=a_3z^2x^{j-3}=0$ if and only if $a_3=0$.
\end{enumerate} 
\par 
Now, looking at degree $j-1$, let $(a_1,a_2,a_3)\in \sf k^3$, $f=a_1z^2x^{j-3}+a_2y^2x^{j-3}+a_3x^{j-1}$ such that $xf=yf=zf=0$.\\
One can easily check that $xf=a_3x^j$, $yf=a_2x^j$ and $zf=a_1x^j$ so $xf=yf=zf=0$ if and only if $a_1=a_2=a_3=0$. 
\par
In conclusion, the socle of $A$ is one dimensional, generated by $x^j$.
\end{example}
\begin{example}
Consider the Hilbert function $T=(1,3,5^k,3,1)$ and \#5 in Table \ref{newk=5table}. We want to show that $F=X^j+X^{j-1}Y+X^{j-3}Z^3+Y^j$ and $A=R/\Ann (F)$ has associated JDT $((j+1)_0,(j-1)_1,(j-2)_{1,2},1\uparrow_2^{j-2})$;  Consider $F_x=x\circ F=X^{j-1}+X^{j-2}Y+X^{j-4}Z^3$, which corresponds to \#3 in Table \ref{4ktable} for $T_1=(1,3,4^{k-1},3,1)$ (or $F_1=X^j+X^{j-1}Y+X^{j-3}Z^3$ for $T=(1,3,4^k,3,1)$). If we have a proof that the dual generator $F_1$ has JDT $((j+1)_0,(j-1)_1,(j-2)_{1,2})$ and we know that $F$ gives the Hilbert function $H(R/\Ann(F))=T$, then we are done, since the Jordan degree type matrix for 
$B=R/\Ann(F_1)$ is
\small
\vskip 0.3cm
\begin{equation*}
		M_{B,x}=\begin{pmatrix}
			1&1&1&1&1&\dots &1&1&1\\\
			&3&3&3&3&\dots &3 &2&1\\\
			&&4&4&4&\dots&4 &3&1\\
			&&&4&4&\dots & 4&3&1\\
			&&&&\ddots&\ddots&\vdots &\vdots&\vdots\\
			&&&&&\ddots&4&3&1\\
			&&& &&&4&3&1\\
			&&& &&&&3&1\\
			&&& &&&&&1\\
		\end{pmatrix},J_{B,x}=\begin{pmatrix}
		0&0&0&0&0&\dots &0&0&1\\\
		&0&0&0&0&\dots &1&1&0\\\
		&&0&0&0&\dots&0 &1&0\\
		&&&0&0&\dots & 0&0&0\\
		&&&&\ddots&\ddots&\vdots &\vdots&\vdots\\
		&&&&&\ddots&0&0&0\\
		&&& &&&0&0&0\\
		&&& &&&&0&0\\
		&&& &&&&&0\\
		\end{pmatrix}.
	\end{equation*}
\normalsize
These imply that the corresponding matrices for $A$ are (first find $M_{A,x}$ then $J_{A,x}$) are, adding respectively a first diagonal $(1,3,5^k,3,1) $ to $M_{A,x}$ and a first diagonal $(0,0,1,1,\dots,1,0,0)$ to $J_{A,x}$
\small
\vskip 0.3cm
\begin{equation*}
		M_{A,x}=\begin{pmatrix}
			1&1&1&1&1&\dots &1&1&1\\\
			&3&3&3&3&\dots &3 &2&1\\\
			&&5&4&4&\dots&4 &3&1\\
			&&&5&4&\dots & 4&3&1\\
			&&&&\ddots&\ddots&\vdots &\vdots&\vdots\\
			&&&&&\ddots&4&3&1\\
			&&& &&&5&3&1\\
			&&& &&&&3&1\\
			&&& &&&&&1\\
		\end{pmatrix},J_{A,x}=\begin{pmatrix}
		0&0&0&0&0&\dots &0&0&1\\\
		&0&0&0&0&\dots &1&1&0\\\
		&&1&0&0&\dots&0 &1&0\\
		&&&1&0&\dots & 0&0&0\\
		&&&&\ddots&\ddots&\vdots &\vdots&\vdots\\
		&&&&&\ddots&0&0&0\\
		&&& &&&1&0&0\\
		&&& &&&&0&0\\
		&&& &&&&&0\\
		\end{pmatrix}.
	\end{equation*}
\normalsize
The JDT matrix  $J_{A,x}$ implies that the JDT for $A$ is $((j+1)_0,(j-1)_1,(j-2)_{1,2},1\uparrow_2^{j-2})$.
\end{example} 
\subsection{Questions and problems.}\label{questsec}
First, can we extend these results to more complex Gorenstein sequences of codimension three, where the Sperner number $s$ is repeated $k$ times?

\begin{question}\label{4.13quest} Consider a general codimension three Gorenstein sequence $T$ as in Equation \ref{Gorseqeqn}: that is, $T=T(T_1,s,k)=(1,3,T_1, s^k, T_1^r, 3,1)$ where $(1,3,T_1,s)$ is an O-sequence, and $T_1^r$ is the reversal of the sequence $T_1$. As $k$ increases, with $T_1,s$ held constant, we expect\par

i.  a stable constant limit number of potential Jordan degree types for $T(T_1,s,k)$ as $T_1$ and $s$ are fixed but $k$ increases;\par
ii. there to be a list of potential Jordan degree types for these $T(T_1,s,k)$ as $T_1,s$ are fixed and $k$ increases, with structure analogous to those we found for almost constant $T$, that is, with the three kinds of parts of Definition \ref{partsdef} satisfying Proposition \ref{partsprop}. \par
iii. that the potential JDT found in (ii) all actually occur.

\end{question}\noindent
{\bf Remark}. We can do similar operations on the possible rank matrices as in the $T$ almost constant case: thus, we can bound the number of potential Jordan types, so answering (i).\par
We expect that parts (ii), (iii) of this question could be approached inductively, from simpler $T_1$ to more complex ones.  For example, to understand even $T=(1,3,5,6^k,5,3,1)$ we might need to have first determined the answers for simpler Hillbert functions as $T=(1,3,4,5^k,4,3,1)$. 

\begin{question}[Limits of Gorenstein algebras]
Let $T$ be a codimension three Gorenstein sequence.  There is an inclusion  $\Gor(T)\subset G_T$ of the family of graded Gorenstein algebras of Hilbert function $T$ in the family $G_T$ of all graded algebra quotients of $R={\sf k}[x,y,z]$ having Hilbert function $T$. \par
a. For which $T$ is the Zariski closure $\overline{\Gor(T)}=G_T$?  It is known that for $T=(1,3^k,1)$ the closure $\overline{\Gor(T)}=G_T$, but for $T=(1,3,5,3,1), (1,3,6,3,1)$ or $(1,3,6,6,3,1)$ the closure of $\Gor(T)$ is proper in $G_T$ \cite[\S 3.7]{AEIY}.\par
b. What are limits of JDT strata of $\Gor(T)$ in $G_T$?  Note that the JDT for algebras $A$ in  $ G_T$ need not be symmetric, even when $A $ is in the closure $\overline{\Gor(T)}$ \cite[Example 4.16]{AEIY}.
\end{question} 
\begin{question}  Do the actual tables - the set $\mathcal J(T)$ of JDT for $T$ - depend on the characteristic of $\sf k$?  See Remark \ref{inffieldrem}.
\end{question}
\begin{ack} The work was begun at the CIRM Conference ``Lefschetz Properties in Algebra, Geometry, and Combinatorics, II'' in 2019 at Luminy, France.  We appreciate contributions to our discussions by Alexandra Seceleanu, and Leila Khatami, who  were members of our codimension three working group at Lumini, and participated further. We thank Pedro Macias Marques and Daniele Taufer for comments and questions. We appreciate comments by the referee. The second author was supported by the grant VR2021-00472. Some part of the work was done when the second author was an NSF postdoc at the program in Commutative Algebra, during Spring 2024 semester at Simons Laufer Mathematical Sciences Institute (formerly MSRI). 

\end{ack}
   
\appendix
\newpage
\section{Tables of JDT in codimension 3 for almost constant $T$, when $s\le 5$.}\label{tablesec}
\footnotesize
\begin{table}[h]
\begin{center}
\footnotesize
$\begin{array}{|c|c|c||c|c|}
\hline
\#&\mathcal{S}&F&I=\Ann(F)&\ell\\
\hline\hline
1&((j+1)_0,((j-1)_1)^2)&X^j+Y^j+Z^j&xy,xz,yz,x^j-y^j,x^j-z^j&{\tiny{x+y+z}}\\
\hline
2&((j+1)_0,(j-1)_1,1\uparrow^{j-1}_1)&X^j+Y^j+Z^j&''&x+y\\
\hline
3&((j+1)_0,2\uparrow^{j-2}_1,1_1,1_{j-1})&X^j+XY^{j-1}+ZY^{j-1}&xz,xy-yz,z^2,x^j-xy^{j-1},x^j-zy^{j-1}&x\\
\hline
4&((j+1)_0,(1\uparrow^{j-1}_1)^2)& X^j+Y^j+Z^j&xy,xz,yz,x^j-y^j,x^j-z^j&x\\
\hline\hline
5&(j_0,j_1, (j-1)_1)& X^{j-1}Z+X^{j-2}Y^2&xz-y^2,yz,z^2,x^{j-1}y,x^j& x\\
\hline
6&(j_0,j_1,1\uparrow^{j-1}_1)& X^{j-1}Y+Z^j&xz,yz,y^2,x^j,x^{j-1}y-z^j& x\\
\hline\hline
7&(3\uparrow^{j-2}_0,1_1,1_{j-1})&X^2Y^{j-2}+Y^{j-1}Z&x^2-yz,xz,z^2,xy^{j-1},y^j&x\\
\hline\hline
8&(2\uparrow^{j-1}_0,1\uparrow^{j-1}_1)&XZ^{j-1}+Y^2Z^{j-2}&x^2,xy,y^2-xz,yz^{j-1},z^j&x\\\hline
\ast 9&(2\uparrow_0^2,2_1)\text{ if $k=2$ }&X(Z^2+Y^2)&x^2,yz,y^2-z^2,x(y^2-z^2)&x\\\hline
\end{array}$\par
\#9\,\text{only occurs for $k=2$}
\caption{Complete 8 JDT for $T=(1,3^k,1), k\ge 3, j=k+1$ (Theorem~\ref{k3prop}).}\label{1351fig}
\end{center}
\end{table}
\newpage
\begin{landscape}
\begin{table*}[h]
\begin{center}
$\begin{array}{|c|c|c|c|c|c|c|c|}
\hline
\#&\mathcal{S}&F&I=\Ann(F) \\
\hline\hline
1&((j+1)_0,((j-1)_1)^2,(j-3)_2)& X^j+X^{j-2}(Y^2+Z^2)\,\,\,CI&yz,y^2-z^2,x^{j-1}-x^{j-3}y^2\\
\hline
2&((j+1)_0,((j-1)_1)^2,1\uparrow_2^{j-2})& X^j+X^{j-1}Y+X^{j-2}Z^2+Z^j&y^2,yz,x^2y-xz^2,x^{j-2}z-z^{j-1},x^j-z^j\\
\hline
3&((j+1)_0,(j-1)_1,(j-2)\uparrow_1^2)& X^j+X^{j-1}Y+X^{j-3}Z^3 &yz,y^2,x^2y-z^3,x^{j-2}z,x^j-x^{j-3}z^3\\
\hline
4&((j+1)_0,(j-1)_1,(j-3)_2,1\uparrow_1^{j-1})& X^j+X^{j-2}Y^2+Z^j\,cs&yz,xz,y^3,x^{j-1}-x^{j-3}y^2,x^{j-2}y^2-z^j\\
\hline
5&((j+1)_0,(j-1)_1,2\uparrow_1^{j-2})& X^j+X^{j-1}Y+XZ^{j-1}\, cs&yz,y^2,x^2z,x^{j-2}y-z^{j-1},x^j-xz^{j-1} \\
\hline
6&((j+1)_0,(j-1)_1,2\uparrow_2^{j-3},1_{1,2,j-2,j-1})& X^j+X^{j-1}Y+XY^{j-1}+Y^{j-1}Z&xz,z^2,xy^2-y^2z,x^{j-1}-x^{j-2}y-y^{j-2}z,y^j\\
\hline
7&((j+1)_0,(j-1)_1,1_1,(1\uparrow_2^{j-2})^2,1_{j-1})& X^j+X^{j-1}Y+Y^j+Z^j\,cs&yz,xz,xy^2,x^{j-1}-x^{j-2}y-y^{j-1},y^j-z^j\\
\hline
8&((j+1)_0,(j-2)\uparrow_1^2,1\uparrow_1^{j-1})&\sum_{i=0}^{\lfloor j/3\rfloor} X^{j-3i}Y^{3i}+Z^j \,cs&yz,xz,x^3-y^3,x^{j-2}y,x^j-z^j\\
\hline
9&((j+1)_0,3\uparrow_1^{j-3},2_{1,{j-2}})& X^j+X^2Y^{j-2}+XY^{j-2}Z+Y^{j-2}Z^2&xz-z^2,xy-yz,z^3,y^{j-1},x^j-y^{j-2}z^2\\
\hline
10&((j+1)_0,3\uparrow_1^{j-3},1_{1,2,j-2,j-1})& X^j+X^2Y^{j-2}+XY^{j-1}+Y^{j-1}Z &xz,z^2,x^2y-y^2z,x^{j-1}-xy^{j-2}+y^{j-2}z, y^j\\
\hline
11&((j+1)_0,2\uparrow_1^{j-2},1\uparrow_1^{j-1})& X^j+XY^{j-1}+Z^j\, cs&yz,xz,x^2y,x^{j-1}-y^{j-1},xy^{j-1}-z^j\\
\hline
12&((j+1)_0,(1\uparrow_1^{j-1})^2,1\uparrow_2^{j-2})& X^j+Z^j+ZY^{j-1}\, cs&xz,yz,yz^2,y^{j-1}-z^{j-1},x^j-z^j\\
\hline
\hline
13&(j\uparrow_0^1,(j-1)_1,(j-3)_2)& X^{j-1}Y+X^{j-2}(Y^2+Z^2)\,\,CI&yz,y^2-z^2,x^{j-1}-x^{j-2}y+x^{j-3}z^2\\
\hline
14&(j\uparrow_0^1,(j-1)_1,1\uparrow_2^{j-2})& X^{j-1}Y+X^{j-2}Z^2 +Z^j&yz,y^2,x^2y-xz^2,x^{j-2}z-z^{j-1},x^j \\
\hline
15&(j\uparrow_0^1,(j-2)_{1,2})& X^{j-1}Y+X^{j-3}Z^3 &yz,y^2,x^2y-z^3,z^4,x^{j-2}z,x^j\\
\hline
16&(j\uparrow_0^1,(j-3)_2,1\uparrow_1^{j-1})& \sum_{i=0}^{\lfloor (j-1)/3\rfloor} X^{j-1-3i}Y^{3i+1}+Z^j\, cs&yz,xz,x^3-y^3,x^{j-3}y^2,x^{j-1}y-z^j\\
\hline
17&(j\uparrow_0^1,2\uparrow_1^{j-2})& X^{j-1}Y+XZ^{j-1}&yz,y^2,x^2z,x^{j-2}y-z^{j-2},x^j\\
\hline
18&(j\uparrow_0^1,2\uparrow_2^{j-3},1_{1,2,j-2,j-1})& X^{j-1}Y+XY^{j-1}+Y^{j-1}Z&xz,z^2,xy^2-y^2z,x^{j-1}-xy^{j-2},y^j\\
\hline
19&(j\uparrow_0^1,1_1,(1\uparrow_2^{j-2})^2,1_{j-1})& X^{j-1}Y+Y^j+Z^j\, cs&yz,xz,xy^2,x^{j-1}-y^{j-1},y^j-z^j\\
\hline
\hline
20&((j-1)_0,((j-1)_1)^2,(j-1)_2)& X^{j-2}(Y^2+Z^2)\,\,CI&yz,y^2-z^2,x^{j-1}\\
\hline
21&((j-1)\uparrow_0^2,1\uparrow_1^{j-1})& X^{j-2}Y^2+Z^j\, cs&yz,xz,y^3,x^{j-1},x^{j-2}y^2-z^j\\
\hline
\hline
22&(4\uparrow_0^{j-3},2_1,2_{j-2})& X^3Y^{j-3}+XY^{j-2}Z\,\,CI&x^2-yz,z^2,y^{j-1}\\
\hline
23&(4\uparrow_0^{j-3},1_{1,2,j-2,j-1})& X^3Y^{j-3}+Y^j+Y^{j-1}Z&xz,z^2,x^3-y^2z,xy^{j-2},y^j-y^{j-1}z\\
\hline
\hline
24&(3\uparrow_0^{j-2},1\uparrow_1^{j-1})& X^2Y^{j-2}+Z^j\, cs&yz,xz,x^3,y^{j-1},x^2y^{j-2}-z^j\\
\hline\hline
25&(2\uparrow_0^{j-1},2\uparrow_1^{j-2})& X(Y^{j-1}+Z^{j-1})\,\,CI&yz,x^2,y^{j-1}-z^{j-1}\\
\hline
26&(2\uparrow_0^{j-1},1_1,(1\uparrow_2^{j-2})^2,1_{j-1})& XY^{j-1}+Y^{j-3}Z^3&xz,x^2,xy^2-z^3,y^{j-2}z,y^j\\
\hline
\end{array}$
        \caption{Complete 26 JDT for $T=(1,3,4^k,3,1), k\ge 3$ with the corresponding Macaulay dual generator (Theorem \ref{s=4thm}).}\label{4ktable}\end{center}
\end{table*}
\end{landscape}
\newpage

\normalsize
\begin{landscape}
\begin{table*}[h]
\begin{center}
$\begin{array}{|c|c|c|c|}
\hline
&\mathcal{S}&F&I=\Ann(F)\\
\hline
\hline
1&(5_0,(3_1)^2,(1_2)^2)& X^4+X^3Z+2X^2Y^2+XZ^3+Y^4&yz, x^3-xz^2-z^3,x^2y-2y^3,x^2z-z^3, xy^2-2z^3 \\
\hline
2&(5_0,3_1,2\uparrow_1^2,1_2)&X^4+X^2Z^2+XY^3&yz,x^2y, x^3-xz^2, y^3-xz^2,z^3  \\
\hline
3&(5_0,3_1,1\uparrow_1^3,(1_2)^2)& X^4+X^2Z^2+YZ^3+Y^4&xy, x^2z-yz^2,x^3-xz^2, y^3-z^3\\
\hline
4&(5_0,(2\uparrow_1^2)^2)& X^4+XYZ^2&y^2, x^3-yz^2,x^2y,x^2z, z^3 \\

\hline
5&(5_0,2\uparrow_1^2,1\uparrow_1^3,1_2)& X^4+XY^3+Y^2Z^2+Z^4&xz, x^2y, x^3-y^3,xy^2-yz^2,y^2z-z^3 \\

\hline\hline
6&(4\uparrow_0^1,3_1,(1_2)^2)&X^3Y+X^2Z^2+Y^3Z&xy-z^2, x^3-y^2z,y^3-x^2z, yz^2,z^3  \\
\hline

7&(4\uparrow_0^1,2_{1,2},1_{2})& X^3Y+XY^2Z+Z^4&x^2-yz, xy^2-z^3,xz^2, yz^2, y^3  \\
\hline

8&(4\uparrow_0^1,1\uparrow_1^3,(1_2)^2)& X^3Y+Y^2Z^2& xz, x^3-yz^2,xy^2,y^3,z^3  \\
\hline
\hline
9&(3\uparrow_0^2,3_{1},1_2)&X^2(Y^2+Z^2)+Y^4&yz,x^3, z^3, xy^2-xz^2, x^2y-y^3\\
\hline
10&(3\uparrow_0^2,2\uparrow_1^2)& X^2Y^2+XZ^3& yz, x^2z, x^3, y^3, xy^2-z^3\\
\hline
11&(3\uparrow_0^2,1\uparrow_1^3,1_{2})& X^2Y^2+YZ^3& xz, y^2z, x^3, y^3, x^2y-z^3\\
\hline
12&(2\uparrow_0^3,2\uparrow_1^2,1_{2})& X(Y^3+Z^3)+Y^2Z^2&x^2, y^2z-xz^2, xyz, y^3-z^3, xy^2-yz^2\\
\hline
\end{array}$
\caption{The 12 JDT for $T=(1,3,5,3,1)$ with the Macaulay dual generator, and ideal  (Theorem \ref{s=5thm}).}\label{13531table}
\end{center}
\end{table*}
\end{landscape}
\newpage
\begin{table*}[h]
\begin{center}
\vskip -0.6cm
$\begin{array}{|c|c|c|c|}
\hline
&\mathcal{S}&F&\text{Table \ref{newk=5table}}\\
\hline\hline
1&(6_0,(4_1)^2,(2_2)^2)&X^5+X^3YZ+XZ^4&1\\
\hline
2&(6_0,(4_1)^2,2_2,1_{2,3})&X^5+X^3YZ+Z^5&4\\
\hline
3&(6_0,(4_1)^2,(1_{2,3})^2)& X^5+X^4Y+X^3Z^2+Y^5+Z^5&3\\
\hline
4&(6_0,4_1,3_{1,2},2_2)&X^5+X^3Y^2+X^2Z^3&2\\
\hline
5&(6_0,4_1,3_{1,2},1_{2,3})&X^5+X^4Y+X^2Z^3+Y^5&5,7\\
\hline
6&(6_0,4_1,2_2,1\uparrow_1^4,1_{2,3})& X^5+X^3Y^2+Y^2Z^3&17\\
\hline
7&(6_0,4_1,2\uparrow_1^3,1_{2,3})& X^5+X^4Y+XZ^4+Y^2Z^3&14\\
\hline
8&(6_0,4_1	,2\uparrow_1^3,2_2)& X^5+X^3Y^2+XZ^4&9\\
\hline
9&(6_0,4_1,1_1,(1\uparrow_2^3)^3,1_4)& X^5+X^4Y+Y^2Z^3&21 \\
\hline

10&(6_0,3_{1,2},2_{1,3},1_{2,3})& X^5+X^2Y^3+XY^3Z&15 \\
\hline

11&(6_0,3_{1,2},2\uparrow_1^3)& X^5+X^2Y^3+XZ^4&10\\
\hline

12&(6_0,(3_{1,2})^2)&X^5+X^2YZ^2&3\\
\hline

13&(6_0,3\uparrow_1^2,1\uparrow_1^4,1\uparrow_2^3)& X^5+X^2Y^3+YZ^4&18\\
\hline
14&(6_0,(2\uparrow_1^3)^2)& X^5+XY^4+XZ^4&16\\
\hline
15&(6_0,2\uparrow_1^3,1\uparrow_1^4,1_{2,3})& X^5+XY^4+YZ^4&22 \\
\hline\hline
16&(5_{0,1},4_1,(2_2)^2)&X^4Y+X^3Z^2+XYZ^3&23\\
\hline
17&(5_{0,1},4_1,2_2,1_{2,3})&X^4Y+X^3Z^2+XZ^4+Y^5&25\\
\hline
18&(5_{0,1},4_1,(1_{2,3})^2)& X^4Y+X^3Z^2+YZ^4&31\\
\hline
19&(5_{0,1},3_{1,2},2_2)&X^4Y+X^2Z^3+XY^4&24\\
\hline
20&(5_{0,1},3\uparrow_1^2,1\uparrow_2^3)& X^4Y+X^2Z^3+YZ^4&28\\
\hline
21&(5_{0,1},2\uparrow_1^3,2_2)& X^4Y+XYZ^3&29\\
\hline
22&(5_{0,1},2_{1,2,3},1_{2,3})& X^4Y+XZ^4+Y^2Z^3&32 \\
\hline
23&(5_{0,1},2_2,1\uparrow_1^4,1_{2,3})& X^4Y+XY^4+Y^2Z^3&33,34\\
\hline
24&(5_{0,1},1_1,(1\uparrow_2^3)^3,1_4)& X^4Y+Y^2Z^3&35\\
\hline\hline

25&(4\uparrow_0^2,3\uparrow_1^2)&X^3Y^2+X^2Z^3&37\\
\hline

26&(4\uparrow_0^2,4_1,2_2)&X^3YZ+XZ^4&36\\
\hline

27&(4\uparrow_0^2,4_1,1\uparrow_2^3)& X^3Y^2+X^3Z^2+Z^5&38\\
\hline
28&(4\uparrow_0^2,2\uparrow_1^3)& X^3Y^2+XZ^4&39\\
\hline
29&(4\uparrow_0^2,2_{1,3},1_{2,3})& X^3Y^2+XY^3Z+Z^5&43 \\
\hline
30&(4\uparrow_0^2,1\uparrow_1^4,1\uparrow_2^3)& X^3Y^2+YZ^4&40,44\\
\hline
\hline
31&(3\uparrow_0^3,3_{1,2})& X^2Y^3+X^2Z^3&\text{none}\\
\hline
32&(3\uparrow_0^3,2\uparrow_1^3)& X^2Y^3+XZ^4&45\\
\hline
33&(3\uparrow_0^3,1\uparrow_1^4,1_{2,3})& X^2Y^3+YZ^4&46\\
\hline\hline

34&(2\uparrow_0^4,2\uparrow_1^3,2_2)& XY^2Z^2&\text{none}\\
\hline
35&(2\uparrow_0^4,2\uparrow_1^3,1_{2,3})& XY^4+XZ^4+Y^2Z^3&47\\
\hline
\end{array}$
\caption{The 35 JDT for $T=(1,3,5,5,3,1)$, with Macaulay dual generator and reference to
corresponding entry of Table \ref{newk=5table} (Theorem~\ref{s=5thm}).}\label{55table}
\end{center}
\end{table*}
\newpage
\small
\begin{landscape}
\begin{table*}
\tiny
\begin{center}
$\begin{array}{|c|c|c|c|c|c|c|c|}\hline
\text {\#}& \mathcal{S}&F&I=\Ann(F)\\\hline\hline
 1&((j+1)_0,((j-1)_1)^2,((j-3)_2)^2)&X^j+X^{j-2}YZ+X^{j-4}Z^4&y^2, yz^2, x^2y-z^3, x^{j-3}z^2, x^{j-1}-x^{j-5}z^4\\
\hline
2&((j+1)_0,(j-1)_1,(j-2)_{1,2},(j-3)_2)&X^j+X^{j-2}Y^2+X^{j-3}Z^3&yz, y^3, xy^2-z^3, x^{j-2}z, x^{j-1}-x^{j-4}z^3\\
\hline
 3&((j+1)_0,((j-2)_{1,2})^2)&\ast &xy-yz, x^3-z^3,y^3-x^2z-y^2z-xz^2+yz^2-z^3, x^{j-7}yz^5 ,x^{j-5}z^4\\
\hline
4&((j+1)_0,((j-1)_1)^2,(j-3)_{2},1\uparrow_2^{j-2})&X^j+X^{j-2}YZ+Y^j&z^2, y^2z, xy^2, y^{j-1}-x^{j-2}z, x^{j-1}-x^{j-3}yz\\
\hline
 5&((j+1)_0,(j-1)_1,(j-2)_{1,2},1\uparrow_2^{j-2})&X^j+X^{j-1}Y+X^{j-3}Z^3+Y^j&yz,xy^2,x^2y-z^3,x^{j-2}z,x^{j-1}-y^{j-1}-x^{j-4}y^3\\
\hline
 6&((j+1)_0,((j-1)_1)^2,2\uparrow_2^{j-3},1_{2,j-3})&X^j+X^{j-1}Y+X^{j-2}Z^2+XZ^{j-1}+YZ^{j-1}&y^2,xyz,x^2y-xz^2+yz^2,x^{j-2}x-yz^{j-2},x^{j-1}-z^{j-1}\\
\hline 
 7&((j+1)_0,(j-1)_1,3\uparrow_1^{j-3},1_{2,j-2})&X^j+X^{j-1}Y+X^2Z^{j-2}+YZ^{j-1}&y^2,xyz,x^2z-yz^2,x^{j-2}y-xz^{j-2},x^{j-1}-xz^{j-2}-z^{j-1}\\
\hline
 8&((j+1)_0,4\uparrow_1^{j-4},3_{1,j-3},1_{2,j-2}) &X^j+Y^{j-3}(X^3+X^2Z+XZ^2+Y^2Z+Z^3)&xz-z^2,xy^2-y^2z+z^3,x^2y-yz^2,y^{j-2}z-y^{j-4}z^3,x^{j-1}+y^{j-1}-y^{j-3}z^2\\
\hline
 9&((j+1)_0,(j-1)_1,(j-3)_2,2\uparrow_1^{j-1})& X^j+X^{j-2}Y^2+XZ^{j-1}&yz, x^2z,y^3,x^{j-3}y^2-z^{j-1},x^{j-1}-z^{j-1}\\
\hline
 10& ((j+1)_0,(j-2)_{1,2},2\uparrow_1^{j-2})& X^j+X^{j-1}Y+X^{j-2}Y^2+XZ^{j-1}&yz,x^2z,y^3,x^{j-3}y^2-z^{j-1},x^{j-1}-x^{j-2}y\\
\hline
 11&((j+1)_0,(j-1)_1,3\uparrow_2^{j-4},2_{1,2,j-3,j-2})& X^j+X^{j-1}Y+X^2Y^{j-2}+XY^{j-2}Z+Y^{j-2}Z^2&xz-z^2, z^3, xy^2-y^2z, y^{j-1}, x^{j-1}-x^{j-2}y-y^{j-3}z^2 \\
\hline
 12&((j+1)_0,4\uparrow_1^{j-4},2_{1,2,j-3,j-2})& X^j+X^3Y^{j-3}+XY^{j-2}Z&z^2,x^2y-y^2z,x^2z,y^{j-1}, x^{j-1}-y^{j-2}z\\
\hline 
 13&(j+1)_0,((j-1)_1)^2,(1\uparrow_2^{j-2})^2)& X^j+X^{j-1}Y+X^{j-2}Z^2+Y^j+Z^j&yz,yx^2-z^2x,xy^2,x^{j-2}z-z^{j-1},x^{j-1}-y^{j-1}-x^{j-1}z^2\\
\hline
 14&((j+1)_0,(j-1)_1,2\uparrow_1^{j-2},1\uparrow_2^{j-2})&X^j+X^{j-1}Y+XZ^{j-1}+Y^j&yz,x^2z,xy^2,x^{j-2}y-z^{j-1},x^{j-1}-y^{j-1}-z^{j-1}\\
\hline
 15&(((j+1)_0,3\uparrow_1^{j-3},2_{1,j-2},1\uparrow_2^{j-2})& X^j+X^2Y^{j-2}+XZY^{j-2}&z^2,x^2z,x^2y-xyz,y^{j-1}, x^{j-1}-zy^{j-2}\\
\hline
 16&((j+1)_0,(2\uparrow_1^{j-2})^2)& X^j+XY^{j-1}+XZ^{j-1}&yz,x^2y,x^2z,x^{j-1}-y^{j-1},x^{j-1}-z^{j-1}\\
\hline
 17&((j+1)_0,(j-1)_1,(j-3)_2,1\uparrow_1^{j-1},1\uparrow_2^{j-2})&X^j+X^{j-2}Y^2+YZ^{j-1}&xz,y^2z,y^3,x^{j-2}y-z^{j-1},x^{j-1}-x^{j-3}y^2\\
\hline
 18& ((j+1)_0,(j-2)_{0,1},1\uparrow_1^{j-2},1\uparrow_2^{j-3})& X^j+X^{j-1}Y+X^{j-2}Y^2+YZ^{j-1}&xz,y^2z,y^3,x^{j-2}y-x^{j-3}y^2-z^{j-1},x^{j-1}-x^{j-2}y^2-z^{j-1}\\
\hline
 19&((j+1)_0,(j-1)_1,2\uparrow_2^{j-3},1\uparrow_1^{j-1},1_2,1_{j-2})&X^j+X^{j-1}Y+XY^{j-1}+Y^{j-2}Z^2&xz,xy^2-yz^2,z^3, x^{j-2}y-y^{j-1},x^{j-1}-y^{j-1}-y^{j-3}z^2\\
\hline 
 20&((j+1)_0,3\uparrow_1^{j-3},1\uparrow_1^{j-1},1_2,1_{j-2})&X^j+X^{2}Y^{j-2}+Y^{j-1}Z+Z^j&xz,yz^2,x^2y-y^2z,y^{j-1}-z^{j-1}, x^{j-1}-xy^{j-2}\\
\hline 
 21&((j+1)_0,(j-1)_1,1_{1,j-1},(1\uparrow_2^{j-2})^3)&X^j+X^{j-1}Y+Y^{j-2}Z^2&xz,z^3,xy^2,y^{j-1},x^{j-1}-x^{j-2}y-y^{j-3}z^2\\
\hline
 22&((j+1)_0,2\uparrow_1^{j-2},1_{1,j-1},(1\uparrow_2^{j-2})^2)&X^j+XY^{j-1}+YZ^{j-1}&xz,z^3,xy^2,y^{j-1},x^{j-1}-x^{j-2}y-y^{j-3}z^2\\
\hline
23&(j_{0,1},(j-1)_1,((j-3)_2)^2)&X^{j-1}(Y+Z)+X^{j-2}YZ+X^{j-4}Y^4 &z^2, y^2z, y^3-x^2z+xyz, x^{j-3}y^2, x^{j-1}-x^{j-2}y-x^{j-2}z+2x^{j-3}yz\\
\hline
 24&(j_{0,1},(j-2)_{1,2},(j-3)_2)&X^{j-1}Y+X^{j-2}Y^2+X^{j-3}Z^3&yz,y^3,xy^2-z^3,x^{j-2}z, x^{j-1}-x^{j-2}y+x^{j-3}z^2\\
\hline
 25&(j_{0,1},(j-1)_1,(j-3)_2,1\uparrow_2^{j-2})&X^{j-1}Y+X^{j-2}(Y^2+Z^2)+Z^j&yz,y^3,xy^2-xz^2,x^{j-2}z-z^{j-1}, x^{j-1}-x^{j-2}y+x^{j-3}z^2\\
\hline
 26&(j_{0,1},(j-2)_{1,2},1\uparrow_2^{j-2})&X^{j-1}Y+X^{j-3}Z^3+Y^j&yz,xy^2,x^2y-z^3,x^{j-2}z, x^{j-1}-y^{j-1}\\
\hline
 27&(j_{0,1},(j-1)_1,2\uparrow_2^{j-3},1_{2,j-2})&X^{j-1}Y+X^{j-2}Z^2+XY^{j-1}+Y^j&yz,z^3,x^2y-xz^2,xy^{j-2}-y^{j-1}+x^{j-3}z^2, x^{j-1}-y^{j-1}+x^{j-3}z^2\\
\hline
 28&(j_{0,1},3\uparrow_1^{j-3},1_{2,j-2})&X^{j-1}Y+X^2Z^{j-2}+YZ^{j-1}&y^2,xyz,x^2z-yz^2,x^{j-2}y-xz^{j-2}, x^{j-1}-z^{j-1}\\ 
\hline
 29&(j_{0,1},(j-3)_2,2\uparrow_1^{j-2})&X^{j-1}Y+X^{j-2}Y^2+XZ^{j-1}&yz, x^2z, y^3, x^{j-3}y^2-z^{j-1}, x^{j-1}-x^{j-2}y+z^{j-1}\\
\hline
 30&(j_{0,1},3\uparrow_2^{j-4},2_{1,2,j-3,j-2})&X^{j-1}Y+Y^{j-2}(X^2+XZ+Z^2)&xz-z^2,z^3,xy^2-y^2z,y^{j-1},x^{j-1}-y^{j-3}z^2\\
\hline
 31&(j_{0,1},(j-1)_1),(1\uparrow_2^{j-2})^2)&X^{j-1}Y+X^{j-2}Z^2+Y^j+Z^j&yz,xy^2,x^2y-xz^2,x^{j-2}z-z^{j-1},x^{j-1}-y^{j-1}\\
\hline
32&(j_{0,1},2\uparrow_1^{j-2},1\uparrow_2^{j-2})&X^{j-1}Y+XZ^{j-1}+Y^j&yz,x^2z,xy^2,x^{j-2}y-z^{j-1},x^{j-1}-y^{j-1}\\
\hline
 33&(j_{0,1},(j-3)_2,1\uparrow_1^{j-1},1\uparrow_2^{j-2})&X^{j-1}Y+X^{j-2}Y^2+YZ^{j-1}&xz,y^2z,y^3,x^{j-2}y-x^{j-3}y^2-z^{j-1},x^{j-1}-z^{j-1}\\
 \hline
 34&(j_{0,1},2\uparrow_2^{j-3},1\uparrow_1^{j-1},1_{2,j-2}) &X^{j-1}Y+XY^{j-1}+Y^{j-2}Z^2&xz,z^3,xy^2-yz^2,x^{j-2}y-y^{j-1},x^{j-1}-y^{j-1}z^2\\
\hline
 35&(j_{0,1},(1_2^{j-2})^3,1_{1,j-1})&X^{j-1}Y+Y^j+YZ^{j-1}&xz,y^2z,xy^2,y^{j-1}-z^{j-1},x^{j-1}-z^{j-1}\\
\hline
 36&((j-1)_0,((j-1)_1)^2,(j-1)_2,(j-3)_2)&X^{j-2}YZ+X^{j-4}Z^4&y^2,yz^2,x^2y-z^3,x^{j-3}z^2,x^{j-1}\\
\hline
 37&((j-1)_{0,1,2},(j-2)_{1,2})&X^{j-2}Y^2+X^{j-3}Z^3&yz,y^3,xy^2-z^3,x^{j-2}z,x^{j-1}\\
\hline
 38&((j-1)_0,((j-1)_1)^2,(j-1)_2,1\uparrow_2^{j-2})&X^{j-2}YZ+Z^j&y^2,yz^2,xz^2,x^{j-2}y-z^{j-1},x^{j-1}\\
\hline
 39&((j-1)_{0,1,2}, 2\uparrow_1^{j-2})& X^{j-2}Y^2+XZ^{j-1}&yz,y^3,x^2z,x^{j-1},x^{j-3}y^2-z^{j-1}\\
 \hline
40&((j-1)_{0,1,2}, 1\uparrow_1^{j-1},1\uparrow_2^{j-2} & X^{j-2}Y^2+YZ^{j-1}&xz,y^3,y^2z,x^{j-1},z^{j-2}y-z^{j-1} \\\hline
\end{array}$
\caption{The JDT \#1-40 for $T=(1,3,5^k,3,1), j=k+3, |T|=5j-7, k\ge 4$ (Theorem~\ref{s=5thm}).} \label{newk=5table}
\end{center}
\end{table*}
\end{landscape}
\newpage
\begin{landscape}
\vskip -1.5cm
\begin{table*}
\small
\begin{center}
$\begin{array}{|c|c|c|c|c|c|c|c|}\hline
\text {\#}& \mathcal{S}&F&I=\Ann(F)\\\hline\hline
 41&(5\uparrow_0^{j-4},3_{1,j-3},1_{2,j-2})&Y^{j-4}(X^4+X^2YZ+Y^2Z^2)&x^2-yz, z^3, xz^2, y^{j-1},xy^{j-2}\\
\hline 
 42&(5\uparrow_0^{j-4},2_{1,2,j-3,j-2})&X^4Y^{j-4}+XY^{j-2}Z&z^2,x^2z,x^3-y^2z,y^{j-1},x^2y^{j-3}\\
\hline
 43&(4\uparrow_0^{j-3},2_{1,j-2},1\uparrow_2^{j-2})&X^3Y^{j-3}+XY^{j-2}Z+Z^j&x^2-yz,xz^2,yz^2,y^{j-1},xy^{j-2}-z^{j-1}\\
 \hline
 44&(4\uparrow_0^{j-3},1\uparrow_1^{j-1},1_{2,j-2})&X^3Y^{j-3}+(Y+Z)^{[j]}-Y^j-ZY^{j-1}&xz,yz^2-z^3,x^3+y^2z-z^3,y^{j-1},xy^{j-2}\\
\hline
45&(3\uparrow_0^{j-2},2\uparrow_1^{j-2}) &X^2Y^{j-2}+XZ^{j-2}&yz,x^2z, x^3, y^{j-1},xy^{j-2}-z^{j-1} \\
\hline 
46&(3\uparrow_0^{j-2},1\uparrow_1^{j-1},1\uparrow_2^{j-2})&X^2Y^{j-2}+YZ^{j-1}&xz,y^2z,x^3,y^{j-1},x^2y^{j-3}-z^{j-1}\\
\hline 
47&(2\uparrow_0^{j-1},2\uparrow_1^{j-2},1\uparrow_2^{j-2})&XYZ^{j-2}+Y^j&x^2,y^2z,xy^2,z^{j-1},y^{j-1}-xz^{j-2}\\
\hline 
\end{array}$
\caption{The JDT \#41-47 for $T=(1,3,5^k,3,1), j=k+3, |T|=5j-7, k\ge 4$.} \label{newk=5btable}
\end{center}
\end{table*}
\vskip -1.5cm
\end{landscape}
\listoftables


\begin{thebibliography}{ABCDEFGH}

\bibitem[AAISY]{AA-Y} N. Abdallah, N. Altafi, A. Iarrobino, A. Secealanu, and J.~Yam\'{e}ogo: \emph{Lefschetz properties of some codimension three Artinian Gorenstein algebras}, J. Algebra 625 (2023) 28-45. 

\bibitem[AEIY]{AEIY}
N. Abdallah, J. Emsalem, A. Iarrobino and J. Yam\'{e}ogo: \emph{Limits of graded Gorenstein algesbras of Hilbert function $(1,3^k,1)$},  Eur. J. Math. 10 (2024), no. 1, Paper No. 9, 42 pp.

\bibitem[Al1]{Al1}
N. Altafi: \emph{Jordan types with small parts for Artinian Gorenstein algebras of codimension three,}
Linear Algebra Appl. 646 (2022), 54-83.  

\bibitem[Al2]{Al2}
N. Altafi: \emph{Hilbert functions of Artinian Gorenstein algebras with the strong Lefschetz property,} Proc. Amer. Math. Soc. 150 (2022), no. 2, 499--513. 

\bibitem[Al3]{Al3}
N. Altafi: \emph{A note on Jordan types and Jordan degree types}, p. 67-76,
in ``Deformations of Artinian algebras and Jordan type,''  A. Iarrobino, P. Macias Marques, M. E. Rossi, J.~Vall\`{e}s, eds., A.M.S. Contemporary Mathematics \#805 (2024).
 arXiv:math.AC/2303.13673.

\bibitem[AlIK]{AIK} N. Altafi, A. Iarrobino, and L. Khatami: \emph{Complete intersection Jordan types in height two}, J. Algebra 557 (2020),  224--277.

 \bibitem[AIMM]{AIM}
 N. Altafi, A. Iarrobino, and P. Macias Marques: \emph{Jordan type of an Artinian algebra, a survey}, Springer INdAM Ser., 59
Springer, Singapore, 2024, 1-27.
ISBN: 9789819738854. arXiv:math.AC/2307.00957. 

  \bibitem[AIKY]{AIKY}
 N. Altafi, A. Iarrobino, L. Khatami, and J. Yam\'{e}ogo: \emph{Number of generators of ideals in Jordan cells of the family of graded Artinian algebras of height two,}
J. Pure Appl. Algebra 227(2023), no.12, Paper No. 107419, 43~pp.


\bibitem[BeOT]{BOT}
A. Bernardi, A. Oneto, D. Taufer: \emph{On schemes evinced by generalized additive decompositions and their regularity,}
J. Math. Pures Appl. (9) 188 (2024), 446-469.
 
 
\bibitem[BMMNZ]{BMMNZ}
M. Boij, J. Migliore, R.M. Mir\'{o}-Roig, U. Nagel, F. Zanello: \emph{On the weak Lefschetz property for Artinian Gorenstein algebras of codimension three.}  
J. Algebra 403 (2014), 48--68. 
 
 
\bibitem[BMMN1]{BMMN1}
M. Boij, J. Migliore, R. Mir\'{o}-Roig, and U. Nagel: \emph{The non-Lefschetz locus,}
J.~Algebra 505 (2018), 288--320.

\bibitem[BMMN2]{BMMN2}
M. Boij, J. Migliore, R. Mir\'{o}-Roig, U. Nagel: \emph{The weak Lefschetz property for Artinian Gorenstein algebras of small Sperner number,} arXiv:math.AC/2406.17943

\bibitem[Bo]{Bo2}
M. Boij: \emph{Components of the space parametrizing graded Gorenstein
Artin algebras with a given Hilbert function}, Pacific J. Math. 187 (1999),
no. 1, 1-11.

 \bibitem[Br]{Br}
 J. Brian\c{c}on: \emph{Description de Hilb$^n\mathbb C\{ x,y\}$}, Invent. Math. 41 (1977), 45--89.

\bibitem[BruH]{BruH}
W. Bruns and J. Herzog: \emph{Cohen-Macaulay rings}, Cambridge Studies in Advanced Mathematics \#39, Cambridge University Press, Cambridge, UK, (1953); revised paperback edition 1998.

\bibitem[BuEi]{BuEi}
D. Buchsbaum and D. Eisenbud: \emph{Algebra structures for finite free
resolutions, and some structure theorems for codimension three},
Amer. J. Math. \textbf{99} (1977), 447--485.

\bibitem[CGo]{CGo}
B. Costa and R. Gondim: \emph{The Jordan type of graded Artinian Gorenstein algebras},
Adv. in Appl. Math. 111 (2019), 101941, 27 pp. 

\bibitem[Di]{Di}
S. J. Diesel: \emph{Some irreducibility and dimension theorems for
families of height 3 Gorenstein algebras}, Pacific J. Math. \textbf{172} (1996),
365--397.

\bibitem[Gon]{Go}
R. Gondim: \emph{On higher Hessians and the Lefschetz properties}, J.~Algebra 489 (2017), 241--263.

\bibitem[GonZa]{GoZa}
R. Gondim and G. Zappal\`{a}:
\emph{On mixed Hessians and the Lefschetz properties,}
J. Pure Appl. Algebra 223 (2019), no. 10, 4268--4282.

\bibitem[Ha]{Ha} 
T. Harima: \emph{Characterization of Hilbert functions of Gorenstein Artin algebras with the weak Stanley property}, Proc. Amer. Math. Soc. 123 (1995), no. 12, 3631--3638.

\bibitem[HaW1]{HW}
T. Harima and J. Watanabe: \emph{The central simple modules of Artinian Gorenstein algebras}, J. Pure Appl. Algebra 210 (2007), no. 2, 447--463.

\bibitem[HaW2]{HW2}
T. Harima and J. Watanabe: \emph{The strong Lefschetz property for Artinian algebras with non-standard grading}, J.  Algebra 311 (2007), 511--537.

\bibitem[HMNW]{HMNW}
T. Harima, J. Migliore, U. Nagel, and J. Watanabe: \emph{The weak and strong Lefschetz properties for Artinian K-algebras,} 
J. Algebra 262 (2003), no. 1, 99--126.

\bibitem[H-W]{H-W}
T. Harima, T. Maeno, H. Morita, Y. Numata, A. Wachi, and J.~Watanabe: \emph{The Lefschetz properties}. Lecture Notes in Mathematics, 2080. Springer, Heidelberg, 2013. xx+250 pp. ISBN: 978-3-642-38205-5.

\bibitem[IK]{IK}
  A. Iarrobino and V. Kanev: {\it{Power Sums, Gorenstein Algebras, and Determinantal
 Varieties}}, Appendix C by Iarrobino and Steven L. Kleiman {\it The Gotzmann Theorems and the Hilbert scheme}, Lecture Notes in Mathematics, 1721. Springer-Verlag, Berlin, 1999. xxxii+345 pp. ISBN: 3-540-66766-0 (1999), 345+xxvii p.

 
 \bibitem[IMM]{IMM}
A. Iarrobino, P. Macias~Marques, and C. McDaniel: \emph{Artinian
    algebras and {J}ordan type}, J. Commut. Algebra 14 (3), 365-414, (2022).


\bibitem[Mac]{Mac}
F.H.S. Macaulay: \emph{The algebraic theory of modular systems}, Cambridge Mathematical Library. Cambridge  University Press, Cambridge, 1916. Reissued with an introduction by P.~Roberts, 1994. xxxii+112 pp. ISBN: 0-521-45562-6.

 \bibitem[MW]{MW}
T. Maeno and J. Watanabe: \emph{Lefschetz elements of Artinian Gorenstein algebras and Hessians of homogeneous polynomials}, Illinois J. Math. 53 (2009) 593--603.

\bibitem[MMP]{MMP}
P. Macias Marques, R. Mir\'{o}-Roig, J. P\'{e}rez: \emph{Jordan type of full Perazzo algebras}, 
arXiv:math.AC/2506.23904 (2025).

\bibitem[SeSr]{SeSr}
S. Seo, and H. Srinivasan: \emph{
On unimodality of Hilbert functions of Gorenstein Artin algebras of embedding dimension four,}
Comm. Algebra 40 (2012), no. 8, 2893-2905.

\bibitem[St]{St}
R. Stanley: \emph{Hilbert functions of graded algebras}, Advances in Math. 28 (1978), 58-73.




\end{thebibliography}
\end{document}